\documentclass[submission,copyright,creativecommons]{eptcs}
\providecommand{\event}{AiML 2026}

\usepackage{graphicx}

\usepackage{mathUtil}
\usepackage{comment}

\usepackage{quiver}

\usepackage{aiml26}

\usepackage{iftex}

\ifpdf
  \usepackage{underscore}         
  \usepackage[T1]{fontenc}        
\else
  \usepackage{breakurl}           
\fi

\title{Coequivalence Relations and Descent in Modal Logic}
\author{Rodrigo Nicolau Almeida
\institute{ILLC-UvA\\ Amsterdam, the Netherlands}
\email{r.dacruzsilvapinadealmeida@uva.nl}
\and 
Matteo De Berardinis
\institute{ILLC-UvA\\ Amsterdam, the Netherlands\\
Università degli Studi di Salerno\\ Salerno, Italy}
\email{mdeberardinis@unisa.it}}

\newcommand{\titlerunning}{Coequivalence relations and descent in modal logic}
\newcommand{\authorrunning}{R.N. Almeida and M. De Berardinis}

\hypersetup{
  bookmarksnumbered,
  pdftitle    = {\titlerunning},
  pdfauthor   = {\authorrunning},
  pdfsubject  = {EPTCS},               
  pdfkeywords = {modal logic, categorical logic, coequivalence relations, effectiveness,descent morphisms, exactness properties}
}

\begin{document}
\maketitle

\begin{abstract}
A coequivalence relation over a modal logic $L$ is a formula $\rho(\overline{p}_{1},\overline{p}_{2})$ such that the logic $L$ proves that $\rho$ is an equivalence relation. They were introduced by Ghilardi and Zawadowski in the context of the categorical study of non-classical logics. A coequivalence relation is said to  \textit{separate variables} or to be \textit{separating} if it corresponds to a conjunction of formulas of the form $\psi(\overline{p}_{1})\leftrightarrow \psi(\overline{p}_{2})$, which serve as explicit definitions of quotients. A logic $L$ where all coequivalence relations are separating is said to have the \textit{coequivalence separation property} (CoSP). Ghilardi and Zawadowski showed that CoSP fails for $\mathsf{IPC}$. In previous work, the second author showed that such a phenomenon happens already in presumably simpler systems like $\mathsf{S5}$. Ghilardi and Zawadowski therefore raised the question whether a weaker property, formulated in categorical terms related to descent theory, was still true. In this paper, we identify the logical meaning of such a property in relation to CoSP. We introduce the notion of  \emph{local coequivalence relations}, which have the additional structure of a \textit{local transition term}, $\xi(\overline{p},\overline{q}_{1},\overline{q}_{2},\overline{r})$, intuitively capturing the structure of elements lying in the same fiber. We introduce the \emph{local coequivalence separation property} (LCoSP), and prove it to be equivalent, in good cases, to the almost Barr-exactness of $\mathsf{Alg}(L)_{fp}^{op}$. We conclude by showing that $\mathsf{S5}$ has the LCoSP.

\end{abstract}

\section{Introduction}

In algebraic logic it has long been a subject of interest to correspond categorical properties of classes of models, on one hand, to logical properties of calculi, especially definability properties: Beth definability theorems and regularity properties \cite{Maksimova1999} as well as uniform interpolation and the existence of adjoints between free algebras \cite[Chapter 3]{Ghilardi2002},
are two notable examples. In \cite[Chapter 4.9]{Ghilardi2002}, Ghilardi and Zawadowski introduced the concept of a \emph{coequivalence relation}, to serve as a logical correspondent to the notion of an \emph{equivalence relation}\footnote{The precise connection is with $r$-equivalence relations -- which are based on \emph{regular} subobjects, instead of arbitrary subobjects. See Remark \ref{rem: rs and beths} for more context on this.} on the dual of the class of finitely presented algebras. In the simplest case, these are formulas $\rho(\overline{p}_{1},\overline{p}_{2})$ over disjoint tuples of variables $\overline{p}_{1},\overline{p}_{2}$ of the same length, such that the logic $L$ proves the axioms of an equivalence relation for $\rho$. The most basic kind of coequivalence relation is one determined by a conjunction of formulas of the form $\psi(\overline{p}_{1})\leftrightarrow \psi(\overline{p}_{2})$ -- such coequivalence relations are said to \emph{separate variables}. A logic is said to have the \emph{coequivalence separation property} (CoSP) if all of its coequivalence relations separate variables. This stands in analogy with Beth definability, in the following sense:
\begin{enumerate}
    \item \textbf{ Beth definability} says that whenever $\Gamma(\overline{p},q)$ proves $q$ to be constant for any $\overline{p}$ (in the sense that $\Gamma(\overline{p},q_{1}),\Gamma(\overline{p},q_{2})\vdash_{L}q_{1}\leftrightarrow q_{2}$), then there exists an explicit term $\psi(\overline{p})$ which $\Gamma$ proves gives the values of $q$. In a slogan: \emph{implicit functions can be made explicit}.
    \item \textbf{Coequivalence separation} says that whenever a logic $L$ can prove a term $\rho(\overline{p}_1,\overline{p}_2)$ to be an equivalence, there are explicit function terms $\psi_i(\overline{p})$ which compute equivalence classes. In a slogan: \emph{implicit quotients can be made explicit.}
\end{enumerate}

In model theoretic terms, this property is an instance of \textit{uniform elimination of imaginaries} \cite{poizattheoriegalois}. In categorical terms, it amounts to the property that equivalence relations in the category $\mathsf{Alg}(L)_{fp}^{op}$ are \textit{effective}. 

Ghilardi and Zawadowski, using a sheaf-theoretic representation of the category $\mathbf{HA}_{fp}^{op}$, showed that $\mathsf{IPC}$ fails to have the CoSP. Moreover, in \cite{barrcoexactness}, the second author studied coequivalence relations in the context of a natural infinitary extension of modal logic. For locally tabular logics, the results from that framework can be transferred, yielding that normal modal logic $\mathsf{S5}$, much like $\mathsf{IPC}$, has non-separating coequivalence relations.

Ghilardi and Zawadowski  raised the question of whether a weakening of Barr-exactness -- namely the property that all regular epimorphisms are of ``effective descent type" -- holds for $\mathbf{HA}_{fp}^{op}$, and left open what the logical or algebraic interpretation of such a property should be. The property in question is
\emph{almost Barr-exactness}\footnote{This terminology was introduced by Janelidze and Sobral in \cite{Janelidze2009}.} \cite{JanelidzeSobralTholen2003} of the category  $\mathbf{HA}_{fp}^{op}$, or more generally, of $\mathsf{Alg}(L)_{fp}^{op}$ for $L$ a normal modal logic or superintuitionistic logic. In the categorical literature, the property of all regular epimorphisms being of effective descent type has been extensively studied in connection to descent theory \cite{JanelidzeSobralTholen2003}.

 In this paper we introduce the concepts of a \emph{local transition term} $\xi(\overline{p},\overline{q}_{1},\overline{q}_{2},\overline{r})$, and of \emph{local coequivalences} induced by such terms, as well as the \emph{local coequivalence separation property} (LCoSP). Local transition terms take their name from the fact that they encode local compatibility between elements, with local coequivalence relations essentially corresponding to the identification of elements lying inside the orbit of local transitions.  Local coequivalence relations should therefore be thought of as inducing \emph{implicit covering maps}, by analogy with the geometric picture where local homeomorphisms and covering maps come equipped with additional local patching data. Formally, the LCoSP corresponds to all local coequivalences separating variables, and consequently, being given by explicit terms.

In Section \ref{sec: coequivalence relations in logic and category theory}, we give all necessary preliminaries concerning the logical, algebraic and category-theoretic concepts used throughout the paper. In Section \ref{sec: coequivalence relations and eq. relations} we define coequivalence relations and their relationship to categorical equivalence relations. In Section \ref{sec: effective descent and coequivalence} we move to the question of effective descent, and show that it corresponds to LCoSP. In Section \ref{sec: descent equivalence relations in S5 frames} we use a concrete representation of equivalence relations in $\mathsf{S5}$ frames to prove the LCoSP for this system, proving a first of this kind of results.

\section{Logic and Algebra}\label{sec: coequivalence relations in logic and category theory}

Throughout we will be working in the language $\mathcal{L}$ of basic modal logic, namely containing the symbols $(\wedge,\neg,\Box,\bot,\top)$. We work throughout with a set of $\mathcal{L}$-\emph{terms} (also \emph{propositional formulas} or just \emph{formulas}) over a fixed but arbitrary set $\mathsf{Prop}$. We use letter $p,q,r,\dots$ for variables, and write $\overline{p}$ for tuples of distinct variables. Whenever we are given a tuple $\overline{p}$, we will write $\overline{p}_{1},\overline{p}_{2},\dots$ to mean disjoint copies of $\overline{p}$, and for $p\in \overline{p}$, write $p_{i}$ to mean the propositional letter in such a tuple. We use the letters $\phi,\psi,\chi,\dots$ for formulas. The notation $\phi(\overline{p})$ means that the formula $\phi$ has free variables included in the tuple; note that when we write $\phi(\overline{p},\overline{q})$ we always assume all variables to be distinct as they occur in $\phi$.

We assume familiarity with the algebraic semantics of normal modal logics (see e.g. \cite[Chapter 8]{Chagrov1997-cr}). By default a \emph{logic} will mean a normal modal logic, unless otherwise specified. Given a logic $L$, we denote by $\mathsf{Alg}(L)$ the class modal algebras validating the axioms in $L$, also called $L$-algebras. We write $\alpha\vdash_{L}\beta$ to mean that $\beta$ is derivable from $\alpha$ using the axioms in $L$ and the rules of Modus Ponens, Uniform substitution (the latter only applied to axioms), and possibly Necessitation. Given a (finite) set of generators $X$, we write $\mathcal{F}_{L}(X)$ to mean the free $X$-generated $L$-algebra. We write $\mathsf{Alg}(L)_{fp}$ for the full subcategory of $\mathsf{Alg}(L)$ given by \emph{finitely presented} $L$-algebras (see \cite[Chapter 2]{Ghilardi2002} for facts about finitely presented $L$-algebras and their duals). These can be identified with quotients of the form $\mathcal{F}_{L}(\overline{p})/\alpha(\overline{p})$, where $\alpha(\overline{p})$ is an arbitrary formula; the quotient is constructed by taking the quotient under $\sim_{\alpha}$, given by, for $\phi(\overline{p}),\psi(\overline{p})$ two formulas:
\begin{equation*}
    \phi\sim_{\alpha}\psi \iff \alpha(\overline{p})\vdash_{L}\phi(\overline{p})\leftrightarrow\psi(\overline{p}).
\end{equation*}
A morphism of finitely presented $L$-algebras $\mathcal{F}_L(\overline{p})/\alpha(\overline{p}) \to \mathcal{F}_L(\overline{q})/\beta(\overline{q})$ is specified by a tuple of formulas $\overline{\theta} = (\theta_p(\overline{q}) \colon p \in \overline{p})$ in the variables $\overline{q}$ such that $\beta(\overline{q}) \vdash_L \alpha(\overline{\theta}/\overline{p})$ (we denote by $\alpha(\overline{\theta}/\overline{p})$, or simply by $\alpha(\overline{\theta})$, the substitution of $\theta_p(\overline{q})$ in every instance of $p$ in $\alpha$). If $\overline{\theta}'$ is another such tuple of formulas, then $\overline{\theta}$ and $\overline{\theta}'$ specify the same morphism if and only if $\beta(\overline{q}) \vdash_L \theta_p(\overline{q}) \leftrightarrow \theta'_p(\overline{q})$ for all $p \in \overline{p}$. If we fix the domain $\mathcal{F}_L(\overline{p})/\alpha(\overline{p})$, any morphism of finitely presented $L$-algebras out of $\mathcal{F}_L(\overline{p})/\alpha(\overline{p})$ can be seen as an extension of the theory $\alpha(\overline{p})$, both in the propositional letters and in the relations imposed; in fact, each one of them can be presented as
$$\mathcal{F}_{L}(\overline{p})/\alpha(\overline{p})\to  \mathcal{F}_{L}(\overline{p},\overline{q})/\beta(\overline{p},\overline{q})$$
with
$\beta(\overline{p},\overline{q})\vdash_L \alpha(\overline{p})$ (the substitution being $\theta_p(\overline{p},\overline{q}) = p$). Among these, the quotients (regular epimorphisms, in the language of category theory) are those extensions for which no new propositional variable is added; in other words, they are given by $\beta(\overline{p})$ such that $\beta(\overline{p}) \vdash_L \alpha(\overline{p})$.

We write $\mathsf{Alg}(L)_{fp}^{op}$ for the dual category of $\mathsf{Alg}(L)_{fp}$. This category will be identified via duality, when convenient, with a category of relational spaces and frames.

\begin{notation}\label{not: conventions}
    When proving the correspondence between syntactic and categorical properties, we will use the following convenient notation. We identify a formula $\alpha(\overline{p})$ with the object in $\mathsf{Alg}(L)_{fp}^{op}$ dual to the finitely presented algebra $\mathcal{F}_L(\overline{p})/\alpha(\overline{p})$. Moreover, given a substitution $\overline{\theta} = (\theta_p(\overline{q}) \colon p \in \overline{p})$ such that $\beta(\overline{q}) \vdash_L \alpha(\overline{\theta}/\overline{p})$, we let the derivation $\beta(\overline{q}) \vdash_L \alpha(\overline{\theta}/\overline{p})$ denote the morphism in $\mathsf{Alg}(L)_{fp}^{op}$ dual to the morphism of finitely presented algebras $\mathcal{F}_L(\overline{p})/\alpha(\overline{p}) \to \mathcal{F}_L(\overline{q})/\beta(\overline{q})$ specified by $\overline{\theta}$. Observe that compositions are obtained by composing substitutions: the composite of $\gamma(\overline{r}) \vdash_L \beta(\overline{\mu}/\overline{q})$ and $\beta(\overline{q}) \vdash_L \alpha(\overline{\theta}/\overline{p})$ is $\gamma(\overline{r}) \vdash_L \alpha(\overline{\theta}(\overline{\mu}/\overline{q})/\overline{p})$.
\end{notation}

In many cases, we will assume that $\mathsf{Alg}(L)_{fp}^{op}$ is a \textit{regular category}. This means a category with finite limits, pullback-stable regular epimorphisms, admitting coequalizers of kernel pairs (see e.g. \cite[Volume 2, Chapter 4]{Borceux1994}). In logical terms, the condition of regularity can be reformulated as follows.

\begin{definition}[Beth definability]
We say that a logic $L$ has the \emph{Beth definability property} if for any finite sets $\overline{p},q$ of propositional variables, and formula $\phi(\overline{p},q)$, whenever $\phi(\overline{p},q_1) \wedge \phi(\overline{p},q_2) \vdash_L q_1 \leftrightarrow q_2$, then there exists a formula $\psi(\overline{p})$ such that $\phi(\overline{p},q) \vdash_L q \leftrightarrow \psi(\overline{p})$.
\end{definition}

\begin{definition}[$\exists_{\mathrm{MIP}}$]
We say that a logic $L$ has the \emph{right-uniform Maehara interpolation property} ($\exists_{\mathrm{MIP}}$) if for any finite sets $\overline{p},\overline{q}$ of propositional variables, and formula $\phi(\overline{p},\overline{q})$, there is a formula $\mu(\overline{q})$ such that for any formulas
$\psi(\overline{q},\overline{r})$ and $\chi(\overline{q},\overline{r})$:
\begin{equation*}
\phi(\overline{p},\overline{q})\wedge \psi(\overline{q},\overline{r})\vdash_{L}\chi(\overline{q},\overline{r}) \iff \mu(\overline{q})\wedge \psi(\overline{q},\overline{r})\vdash_{L}\chi(\overline{q},\overline{r})
\end{equation*}
\end{definition}

\begin{remark}\label{rem: rs and beths}
   The Beth property ensures that the \textit{r}- version of regularity coincides with the standard one (as explained in \cite[Chapter 2.3]{Ghilardi2002}). Throguhout the paper we will assume our logic $L$ to have the Beth definability property; for this reason we will not make any distinction between r-regularity and regularity, or between r-equivalence relations and equivalence relations.
\end{remark}

The property $\exists_{\mathrm{MIP}}$ entails regularity of $\mathsf{Alg}(L)_{fp}^{op}$: see e.g. \cite[Appendix B]{almeidaghilardiunificationsimplevariable}. We then have the following:

\begin{proposition}\label{prop: regularity of the category in question}
    If $L$ is a logic with the Beth property and with $\exists_{\mathrm{MIP}}$, then  $\mathsf{Alg}(L)_{fp}^{op}$ is a regular category.
\end{proposition}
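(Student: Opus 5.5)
We verify the three ingredients of regularity for $\mathsf{Alg}(L)_{fp}^{op}$ directly in the syntactic presentation of Notation \ref{not: conventions}: finite limits, coequalizers of kernel pairs, and pullback-stability of regular epimorphisms.

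\textbf{Finite limits.} The terminal object of $\mathsf{Alg}(L)_{fp}^{op}$ is the formula $\top$ in no variables, dual to the initial algebra $\mathcal{F}_L(\emptyset)$. Pullbacks in the opposite category are pushouts of finitely presented algebras. Given $\beta(\overline{q}) \vdash_L \alpha(\overline{\theta}/\overline{p})$ and $\gamma(\overline{r}) \vdash_L \alpha(\overline{\mu}/\overline{p})$, the pushout is
\[
\mathcal{F}_L(\overline{q},\overline{r})/\bigl(\beta(\overline{q})\wedge\gamma(\overline{r})\wedge\textstyle\bigwedge_{p}\Box^{\le n}(\theta_p\leftrightarrow\mu_p)\bigr),
\]
where the conjunct is taken with enough boxes to generate the relevant filter; this is routine for finitely presented algebras. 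So finite limits exist without any assumption on $L$.

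\textbf{Coequalizers of kernel pairs.} In the algebraic picture, a coequalizer of a kernel pair in the opposite category is an equalizer in $\mathsf{Alg}(L)_{fp}$ of the two coprojections into the pushout of $f\colon \mathcal{F}_L(\overline{p})/\alpha\to\mathcal{F}_L(\overline{p},\overline{q})/\beta$ with itself. The equalizer as sets is the subalgebra of elements $\phi(\overline{p},\overline{q})$ satisfying
\[
\beta(\overline{p},\overline{q}_1)\wedge\beta(\overline{p},\overline{q}_2)\vdash_L \phi(\overline{p},\overline{q}_1)\leftrightarrow\phi(\overline{p},\overline{q}_2)
\]
(up to the boxed version). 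The difficulty is showing this subalgebra is finitely presented. Here we use $\exists_{\mathrm{MIP}}$: take $\mu(\overline{p})$ for $\beta(\overline{p},\overline{q})$, eliminating $\overline{q}$. By Beth definability, each such $\phi$ is equivalent modulo $\beta$ to a formula in $\overline{p}$ alone. Then the right-uniform interpolant shows that the equalizer is the image of $\mathcal{F}_L(\overline{p})/\mu(\overline{p})$, hence finitely presented. This is where Remark \ref{rem: rs and beths} enters: Beth lets us ignore the r- distinction, so that regular epimorphisms in $\mathsf{Alg}(L)_{fp}^{op}$ correspond exactly to injective, i.e.\ conservative, extensions.

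\textbf{Pullback stability.} This is the main obstacle. Regular epis in the opposite category are the maps $\alpha(\overline{p})\to\beta(\overline{p},\overline{q})$ that are conservative: $\beta\wedge\psi(\overline{p},\overline{r})\vdash\chi(\overline{p},\overline{r})$ implies $\alpha\wedge\psi\vdash\chi$. This characterization is exactly what $\exists_{\mathrm{MIP}}$ provides, since the interpolant of $\beta$ is equivalent to $\alpha$. We must show that conservativity is preserved under pushout along an arbitrary $\alpha(\overline{p})\to\gamma(\overline{p},\overline{r})$, i.e.\ that $\gamma\wedge\beta$ is conservative over $\gamma$. This follows from the Maehara form of interpolation, where a side formula $\psi(\overline{q},\overline{r})$ is allowed in the antecedent. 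Take $\psi:=\gamma$ and use $\mu\equiv\alpha$. Then $\beta\wedge\gamma\vdash\chi$ gives $\alpha\wedge\gamma\vdash\chi$, i.e.\ $\gamma\vdash\chi$.

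This is precisely the argument of \cite[Appendix B]{almeidaghilardiunificationsimplevariable}. Together with the Beth property, it yields the statement.
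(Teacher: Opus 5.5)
Your plan differs from the paper, which gives no argument of its own: it cites \cite[Appendix B]{almeidaghilardiunificationsimplevariable} for regularity from $\exists_{\mathrm{MIP}}$ and uses Remark \ref{rem: rs and beths} to drop the r-distinction. Instead you verify finite limits, coequalizers of kernel pairs and pullback-stability directly. Several of your steps are correct:
\begin{itemize}
\item the image of $\mathcal{F}_L(\overline p)$ in $\mathcal{F}_L(\overline p,\overline q)/\beta$ is $\mathcal{F}_L(\overline p)/\mu$;
\item injectivity of $\mathcal{F}_L(\overline p)/\alpha\to\mathcal{F}_L(\overline p,\overline q)/\beta$ forces $\mu\dashv\vdash_L\alpha$;
\item this gives the Maehara-style conservativity, hence injectivity of the pushout along any $\gamma\in\mathsf{Ext}(\alpha)$.
\end{itemize}

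\textbf{The gap.} The problem is the sentence ``By Beth definability, each such $\phi$ is equivalent modulo $\beta$ to a formula in $\overline p$ alone''. The hypothesis $\beta(\overline p,\overline q_1)\wedge\beta(\overline p,\overline q_2)\vdash_L\phi(\overline p,\overline q_1)\leftrightarrow\phi(\overline p,\overline q_2)$ is an implicit definition in which the auxiliary variables $\overline q$ differ between the two copies. That is a \emph{projective} Beth situation. The Beth property of the paper only applies when the two copies share every variable except the defined ones. Nor does the interpolant $\mu$ of $\beta$ help: it does not mention $\phi$, so it cannot produce the explicit definition.

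This step carries real weight in your argument. It is what makes the equalizer finitely presented. It is also your only justification for ``injective $\Rightarrow$ regular mono of algebras'', which you need to conclude that the pulled-back map is again a regular epi in $\mathsf{Alg}(L)_{fp}^{op}$. Beth alone only says that epis of finitely presented algebras are surjective, i.e.\ that monos of $\mathsf{Alg}(L)_{fp}^{op}$ are regular. That is the content of Remark \ref{rem: rs and beths}, not that injective extensions are regular monos.

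\textbf{The fix.} The missing lemma follows by combining both hypotheses.
\begin{itemize}
\item Take a fresh variable $s$ and set $\Gamma(\overline p,\overline q,s):=\beta(\overline p,\overline q)\wedge(s\leftrightarrow\phi(\overline p,\overline q))$. Then $\Gamma(\overline p,\overline q_1,s_1)\wedge\Gamma(\overline p,\overline q_2,s_2)\vdash_L s_1\leftrightarrow s_2$.
\item Let $\nu(\overline p,s)$ be the $\exists_{\mathrm{MIP}}$-interpolant of $\Gamma$ eliminating $\overline q$.
\item Apply $\exists_{\mathrm{MIP}}$ to eliminate $\overline q_1$, with side formula $\Gamma(\overline p,\overline q_2,s_2)$. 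Then apply it again to eliminate $\overline q_2$, with side formula $\nu(\overline p,s_1)$. This yields $\nu(\overline p,s_1)\wedge\nu(\overline p,s_2)\vdash_L s_1\leftrightarrow s_2$.
\item Now Beth applies and gives $\psi(\overline p)$ with $\nu\vdash_L s\leftrightarrow\psi(\overline p)$.
\item Since $\Gamma\vdash_L\nu$, substituting $\phi$ for $s$ gives $\beta\vdash_L\phi\leftrightarrow\psi(\overline p)$.
\end{itemize}
With this lemma inserted, your argument becomes a complete, self-contained proof. In the cited route the same elementwise fact comes out as a consequence rather than an input: once $\mathsf{Alg}(L)_{fp}^{op}$ is regular and monos there are regular, the comparison map from the coequalizer of a kernel pair is a mono, hence dually a surjection.

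\textbf{Minor point.} Since $\vdash_L$ includes Necessitation, the plain conjunction of $\beta$, $\gamma$ and the biconditionals $\theta_p\leftrightarrow\mu_p$ already presents the pushout, so the $\Box^{\le n}$ are unnecessary. Under a local reading of consequence, no fixed $n$ would suffice for logics such as $\mathsf{K}$.
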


\section{Coequivalence relations and equivalence relations}\label{sec: coequivalence relations and eq. relations}

We move on to the key definition of a coequivalence relation. The following facts are noted in \cite[Chapter 4.9]{Ghilardi2002} and \cite[Section 5.4]{deberardinisprooftheoryforprofinite}. We fix throughout a given normal modal logic $L$ with the \textit{Beth definability property}.

\begin{definition}[Coequivalence relation]\label{def:eqrelalg}
    Let $\rho(\overline{p}_{1},\overline{p}_{2})$ be a formula over tuples of the same length $\overline{p}_{1},\overline{p}_{2}$. Given a formula $\tau(\overline{p})$ we say that $\rho$ is a $\tau$-\emph{coequivalence relation} if $\rho(\overline{p}_{1},\overline{p}_{2})\vdash_L\tau(\overline{p}_{1})\wedge \tau(\overline{p}_{2})$, and it satisfies:
    \begin{enumerate}
        \item (Reflexivity) $\tau(\overline{p})\vdash_{L} \rho(\overline{p},\overline{p})$;
        \item (Symmetry) $\rho(\overline{p}_{1},\overline{p}_{2})\vdash_{L}\rho(\overline{p}_{2},\overline{p}_{1})$;
        \item (Transitivity) $\rho(\overline{p}_{1},\overline{p}_{2})\wedge\rho(\overline{p}_{2},\overline{p}_{3})\vdash_{L}\rho(\overline{p}_{1},\overline{p}_{3})$.
    \end{enumerate}
\end{definition}

As a simple example, notice that for a tuple $\psi_{1}(\overline{p}),...,\psi_{m}(\overline{p})$ of formulas, the formula $\bigwedge_{i=1}^{m}\psi_{i}(\overline{p}_{1})\leftrightarrow \psi_{i}(\overline{p}_{2})$ is a coequivalence relation.

Coequivalence relations coincide precisely with equivalence relations, in a categorical sense, as we proceed to explain.

\begin{notation}
    Throughout, given a category $\mathbf{C}$ with finite limits, and objects $X,Y,Z\in \mathbf{C}$:
    \begin{enumerate}
        \item We write $X\otimes Y$ to mean the categorical product;
        \item Given morphisms $f\colon X\to Z$ and $g\colon Y\to Z$, we write $X\otimes_{Z}Y$ to mean the categorical pullback.
    \end{enumerate}
\end{notation}

\begin{definition}[Equivalence relation]\label{def:eqrel}
    Let $\mathbf{C}$ be a category with all finite limits, and $X$ an object there. An \textit{equivalence relation} over $X$ is a subobject $(\pi_1,\pi_2) \colon R\hookrightarrow X\otimes X$, 
    equipped with the following morphisms:
    \begin{enumerate}
        \item \textit{internal reflexivity}, $r \colon X\to R$, which is a section of both $\pi_{i}$, i.e., $\pi_{1}r=\pi_{2}r=1_{X}$;
        \item \textit{internal symmetry}, $s \colon R\to R$, such that $\pi_{1}s=\pi_{2}$ and $\pi_{2}s=\pi_{1}$;
        \item \textit{internal transitivity}, $t \colon R\otimes_{X}R\to R$, such that $\pi_1 t = \pi_1 \kappa_1$ and $\pi_2 t = \pi_2 \kappa_2$, where $R\otimes_{X}R$ is the pullback
\[\begin{tikzcd}
	{R \otimes_X R} & R \\
	R & X
	\arrow["{\kappa_1}", from=1-1, to=1-2]
	\arrow["{\kappa_2}"', from=1-1, to=2-1]
	\arrow["{\pi_2}", from=1-2, to=2-2]
	\arrow["{\pi_1}"', from=2-1, to=2-2]
\end{tikzcd}\]
\end{enumerate}
\end{definition}

\begin{definition}[Effective equivalence]\label{def:effeqrel}
    Given a category $\mathbf{C}$ with finite limits, and an object $X$, we say that an equivalence relation $R$ over $X$ is \emph{effective} if there exists a morphism $f\colon X\to Q$ such that $R$ is (isomorphic to) the pullback $X\otimes_{Q}X$, called the \emph{kernel pair} of $f$.
\end{definition}

\begin{remark}\label{rem:eqrel}
The categorical product $X \otimes X$ is always an effective equivalence relation over $X$; the maps $r$, $s$ and $t$, witnessing the internal properties of this equivalence relation, are induced by the universal property of the product. One can therefore alternatively say that a subobject $R$ of $X \otimes X$ satisfies:
\begin{enumerate}
    \item internal reflexivity if and only if $r \colon X\to X \otimes X$ factorizes through $R$;
    \item internal symmetry if and only if the restriction of $s \colon X \otimes X \to X \otimes X$ to $R$ factorizes through $R$;
    \item internal transitivity if and only if the restriction of $t \colon (X \otimes X) \otimes_{X} (X \otimes X) \to X \otimes X$ to $R\otimes_{X}R$ (which is a subobject of $(X \otimes X) \otimes_{X} (X \otimes X)$) factorizes through $R$.
\end{enumerate}
\end{remark}

The effectiveness of equivalence relations is often considered for regular categories. Such a combination gives rise to the following important notion:

\begin{definition}[Barr-exact category]\label{def: Barr-exactness in categories}
    We say that a category is \emph{Barr-exact} if it is regular and all equivalence relations are effective.
\end{definition}

We briefly sketch the correspondence between coequivalence relations and equivalence relations in $\mathsf{Alg}(L)_{fp}^{op}$, as outlined in \cite[Chapter 4.8]{Ghilardi2002}. Recall our convention from \ref{not: conventions}. The product in $\mathsf{Alg}(L)_{fp}^{op}$ of $\tau(\overline{p})$ with itself is given by $\tau(\overline{p}_1) \wedge \tau(\overline{p}_2)$, with projections $\tau(\overline{p}_1) \wedge \tau(\overline{p}_2) \vdash_L \tau(\overline{p}_1/\overline{p})$ and $\tau(\overline{p}_1) \wedge \tau(\overline{p}_2) \vdash_L \tau(\overline{p}_2/\overline{p})$.

A subobject of $\tau(\overline{p}_1) \wedge \tau(\overline{p}_2)$ is given by $\rho(\overline{p}_1,\overline{p}_2)$ such that $\rho(\overline{p}_{1},\overline{p}_{2}) \vdash_L \tau(\overline{p}_{1})\wedge\tau(\overline{p}_{2})$ (as noted in \ref{not: conventions}, quotients in $\mathsf{Alg}(L)_{fp}$ are given by those extensions of theories for which no new propositional variable is added).

\begin{proposition}\label{prop: coequivalence to dual}
    A finitely presented algebra $\mathcal{F}_{L}(\overline{p}_{1},\overline{p}_{2})/\rho(\overline{p}_{1},\overline{p}_{2})$ is dual to an equivalence relation over $\tau(\overline{p})$ in $\mathsf{Alg}(L)_{fp}^{op}$ if and only if $\rho(\overline{p}_{1},\overline{p}_{2})$ is a $\tau$-coequivalence relation. More specifically:
    \begin{enumerate}
        \item an internal reflexivity map 
        exists if and only if $\tau(\overline{p})\vdash_{L}\rho(\overline{p},\overline{p})$;
        \item an internal symmetry map 
        exists if and only if $\rho(\overline{p}_{1},\overline{p}_{2})\vdash_{L}\rho(\overline{p}_{2},\overline{p}_{1})$;
        \item an internal transitivity map $t$ exists if and only if $\rho(\overline{p}_{1},\overline{p}_{2})\wedge\rho(\overline{p}_{2},\overline{p}_{3})\vdash_{L}\rho(\overline{p}_{1},\overline{p}_{3})$.
    \end{enumerate}
\end{proposition}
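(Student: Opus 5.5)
The plan is to verify each of the three clauses separately, using the characterisation of Remark \ref{rem:eqrel}. By that remark, the internal maps of an equivalence relation are the restrictions of the canonical maps $r,s,t$ of the effective relation $X\otimes X$, and each clause amounts to a factorization through the subobject $R$. Throughout, $X$ is the object $\tau(\overline{p})$, $X\otimes X$ is $\tau(\overline{p}_1)\wedge\tau(\overline{p}_2)$, and $R$ is the subobject given by $\rho(\overline{p}_1,\overline{p}_2)\vdash_L\tau(\overline{p}_1)\wedge\tau(\overline{p}_2)$ with the identity substitution. Since $R\hookrightarrow X\otimes X$ is dual to a quotient (a regular epimorphism, hence an epimorphism in $\mathsf{Alg}(L)_{fp}$), any factorization through $R$ is unique. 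So the question in each case is only whether the canonical substitution is a morphism into $\rho$.

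The key lemma I would isolate is the following. Let $\gamma(\overline{r})\vdash_L \sigma(\overline{\mu})$ be a morphism into $\sigma(\overline{p}_1,\overline{p}_2)=\tau(\overline{p}_1)\wedge\tau(\overline{p}_2)$. It factors through $\rho\vdash_L\sigma$ if and only if $\gamma(\overline{r})\vdash_L\rho(\overline{\mu})$. The reason is that a map into $\rho$ is a substitution $\overline{\mu}'$ with $\gamma\vdash_L\rho(\overline{\mu}')$. Composing with the inclusion, whose substitution is the identity, gives $\overline{\mu}'$ again. Equality of morphisms then means $\gamma\vdash_L \mu'_p\leftrightarrow\mu_p$ for all $p$, and provable equivalents may be substituted inside $\rho$ under the hypothesis $\gamma$. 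For this last point I would note that in a normal modal logic, equivalence under a global hypothesis $\gamma$ (with necessitation allowed, as in the paper's notion of $\vdash_L$) is a congruence for $\Box$.

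Next I would compute the canonical maps as substitutions.
\begin{enumerate}
\item The diagonal $r$ is $\tau(\overline{p})\vdash_L\sigma(\overline{p}/\overline{p}_1,\overline{p}/\overline{p}_2)$. By the lemma, it factors through $\rho$ if and only if $\tau(\overline{p})\vdash_L\rho(\overline{p},\overline{p})$.
\item The restriction of the swap $s$ to $R$ is $\rho(\overline{p}_1,\overline{p}_2)\vdash_L\sigma(\overline{p}_2/\overline{p}_1,\overline{p}_1/\overline{p}_2)$. It factors if and only if $\rho(\overline{p}_1,\overline{p}_2)\vdash_L\rho(\overline{p}_2,\overline{p}_1)$.
\item For transitivity, I first compute the pullback $R\otimes_X R$. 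Dually this is a pushout of finitely presented algebras, and it can be presented as $\rho(\overline{p}_1,\overline{p}_2)\wedge\rho(\overline{p}_2,\overline{p}_3)$ after identifying the shared copy of the variables. The maps $\kappa_1,\kappa_2$ are the obvious substitutions. The composite $R\otimes_XR\to(X\otimes X)\otimes_X(X\otimes X)\to X\otimes X$ is the substitution $\overline{p}_1\mapsto\overline{p}_1$, $\overline{p}_2\mapsto\overline{p}_3$. By the lemma, it factors through $R$ if and only if $\rho(\overline{p}_1,\overline{p}_2)\wedge\rho(\overline{p}_2,\overline{p}_3)\vdash_L\rho(\overline{p}_1,\overline{p}_3)$.
\end{enumerate}

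For the main equivalence, I would also check that the required identities are automatic. These are $\pi_ir=1$, $\pi_1s=\pi_2$, $\pi_2s=\pi_1$ and $\pi_1t=\pi_1\kappa_1$, $\pi_2t=\pi_2\kappa_2$. Since the $\pi_i$ jointly form a monomorphism into the product, the identities hold exactly because the maps are restrictions of the canonical ones. Conversely, any maps satisfying them must be those restrictions.

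The step I expect to be the main obstacle is the explicit description of the pullback $R\otimes_X R$. One has to verify the universal property of the pushout $\mathcal{F}_L(\overline{p}_1,\overline{p}_2)/\rho\,+_{\mathcal{F}_L(\overline{p})/\tau}\,\mathcal{F}_L(\overline{p}_2,\overline{p}_3)/\rho$. This comes down to checking that a pair of substitutions agreeing on the shared copy of $\overline{p}$ modulo the hypothesis amounts to a single substitution on $\overline{p}_1,\overline{p}_2,\overline{p}_3$ satisfying both conjuncts. The verification is a routine but careful argument with the equality criterion for substitutions.
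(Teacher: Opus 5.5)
Your proposal is correct and takes essentially the paper's approach: you unfold the internal maps as substitutions, and the projection equations (equivalently, factorization through $R$ as in Remark~\ref{rem:eqrel}) force each substitution to be provably equivalent, modulo the hypothesis, to the canonical one, so existence reduces to the stated derivability. The paper writes out only reflexivity and leaves implicit the two points you make explicit: that provable equivalents under a global hypothesis can be substituted inside $\rho$, and that $R\otimes_X R$ is presented by $\rho(\overline{p}_1,\overline{p}_2)\wedge\rho(\overline{p}_2,\overline{p}_3)$.
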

\begin{proof}
We prove 1., which illustrates the arguments for the other cases. First recall the restricted projections, $$\rho(\overline{p}_1,\overline{p}_2) \vdash_L \tau(\overline{p}_1) \wedge \tau(\overline{p}_2) \vdash_L \tau(\overline{p}_1/\overline{p}) \quad \text{ and } \quad \rho(\overline{p}_1,\overline{p}_2) \vdash_L \tau(\overline{p}_1) \wedge \tau(\overline{p}_2) \vdash_L \tau(\overline{p}_2/\overline{p}).$$

Internal reflexivity is given by a map $\tau(\overline{p})\vdash_{L}\rho(\overline{\mu}_1/\overline{p}_1,\overline{\mu}_2/\overline{p}_2)$, whose compositions with such restricted projections, namely: $$\tau(\overline{p}) \vdash_L \tau(\overline{\mu}_1/\overline{p}_1) \quad \text{ and } \quad \tau(\overline{p}) \vdash_L \tau(\overline{\mu}_2/\overline{p}_2),$$ are the identity. This means that we can equivalently rewrite the map witnessing internal reflexivity simply as $\tau(\overline{p})\vdash_{L}\rho(\overline{p}/\overline{p}_1,\overline{p}/\overline{p}_2)$, i.e., $\tau(\overline{p})\vdash_{L}\rho(\overline{p},\overline{p})$.
\end{proof}

\begin{definition}[Coeq. separating variables]
    A $\tau$-coequivalence relation $\rho(\overline{p}_{1},\overline{p}_{2})$ is said to \emph{separate variables} or simply be \emph{separating} if there exists a sequence of formulas $\psi_{1}(\overline{p}),\dots,\psi_{m}(\overline{p})$, such that
        \begin{equation*}
        \rho(\overline{p}_{1},\overline{p}_{2}) \dashv\vdash_{L} \tau(\overline{p}_{1})\wedge \tau(\overline{p}_{2})\wedge \bigwedge_{i=1}^{m}\psi_{i}(\overline{p}_{1})\leftrightarrow\psi_{i}(\overline{p}_{2}).
    \end{equation*}
\end{definition}

The following was first observed by Ghilardi and Zawadowski \cite[pp.101]{Ghilardi2002}:

\begin{proposition}\label{prop: effective to coequivalence}
A finitely presented algebra $\mathcal{F}_{L}(\overline{p}_{1},\overline{p}_{2})/\rho(\overline{p}_{1},\overline{p}_{2})$ is dual to an effective equivalence relation over $\tau(\overline{p})$ in $\mathsf{Alg}(L)_{fp}^{op}$ if and only if $\rho(\overline{p}_{1},\overline{p}_{2})$ is a separating $\tau$-coequivalence relation.
\end{proposition}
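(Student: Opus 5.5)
We prove both directions by computing kernel pairs in $\mathsf{Alg}(L)_{fp}^{op}$ syntactically, using the conventions of Notation~\ref{not: conventions}. First we record how a pullback of two morphisms with common codomain looks. Dually it is a pushout of finitely presented algebras, obtained by taking the union of the two theories and identifying the images of the shared generators.

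Concretely, let $\beta(\overline{q})\vdash_L\tau(\overline{\theta}/\overline{p})$ present a morphism $f$ from $\beta$ to $\tau$. Its kernel pair is the formula
$$\kappa_f(\overline{q}_1,\overline{q}_2):=\beta(\overline{q}_1)\wedge\beta(\overline{q}_2)\wedge\bigwedge_{p\in\overline{p}}\theta_p(\overline{q}_1)\leftrightarrow\theta_p(\overline{q}_2).$$
Its two projections to $\beta$ are the obvious ones. This is checked directly from the universal property: a pair of maps into $\beta$ given by tuples $\overline{\mu}_1,\overline{\mu}_2$ equalizes $f$ exactly when the source proves $\theta_p(\overline{\mu}_1)\leftrightarrow\theta_p(\overline{\mu}_2)$ for every $p$.

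\textbf{Right to left.} Suppose
$$\rho\dashv\vdash_L\tau(\overline{p}_1)\wedge\tau(\overline{p}_2)\wedge\bigwedge_i\psi_i(\overline{p}_1)\leftrightarrow\psi_i(\overline{p}_2).$$
Introduce fresh variables $\overline{s}=(s_1,\dots,s_m)$ and take $Q$ to be the object $\top(\overline{s})$, the free algebra on $\overline{s}$. Take $f$ to be the morphism $\tau(\overline{p})\vdash_L\top(\overline{\psi}/\overline{s})$. By the computation above, the kernel pair of $f$ is exactly the displayed formula, hence equivalent to $\rho$. So the subobject dual to $\rho$ is isomorphic to this kernel pair, and the relation is effective. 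By Proposition~\ref{prop: coequivalence to dual}, $\rho$ presents an equivalence relation in the first place.

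\textbf{Left to right.} Suppose the equivalence relation dual to $\rho$ is effective, witnessed by some $f\colon\tau\to Q$ with $Q=\beta(\overline{q})$, given by $\tau(\overline{p})\vdash_L\beta(\overline{\theta}/\overline{q})$. The kernel pair of $f$ is
$$\tau(\overline{p}_1)\wedge\tau(\overline{p}_2)\wedge\bigwedge_{q}\theta_q(\overline{p}_1)\leftrightarrow\theta_q(\overline{p}_2).$$
The conjuncts $\beta(\overline{\theta}(\overline{p}_i))$ are absorbed because $\tau$ entails them. We then set $\psi_q:=\theta_q$ and conclude $\rho\dashv\vdash_L$ this formula.

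The main obstacle is the step "$R$ isomorphic to the kernel pair" as subobjects of $\tau\otimes\tau$, i.e.\ compatibly with the projections, which is what Definition~\ref{def:effeqrel} intends. Two quotients of $\mathcal{F}_L(\overline{p}_1,\overline{p}_2)/(\tau(\overline{p}_1)\wedge\tau(\overline{p}_2))$ that are isomorphic over the generators are presented by interderivable formulas. This holds because an isomorphism of quotients commuting with the quotient maps forces the two kernels (filters) to coincide, and a principal filter determines its generating formula up to $\dashv\vdash_L$.

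One point needs care: we must check that the factorization of $R$ through $\tau\otimes\tau$ is compatible with the kernel-pair projections. If only an abstract isomorphism $R\cong X\otimes_Q X$ is given, we interpret it as one commuting with the projections to $X$, since both are subobjects of $X\otimes X$ and this is the standard reading. Under that reading the identification of presenting formulas, and hence separation, follows.
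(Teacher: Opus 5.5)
Your proof is correct and follows the paper's route. The paper's proof is just the one-line observation that $\tau(\overline{p}_1)\wedge\tau(\overline{p}_2)\wedge\bigwedge_i\psi_i(\overline{p}_1)\leftrightarrow\psi_i(\overline{p}_2)$ is the kernel pair of a morphism $\tau(\overline{p})\vdash_L\sigma(\overline{\psi}/\overline{r})$, and you supply the syntactic kernel-pair computation and the identification of subobjects over $\tau\otimes\tau$ that this observation relies on. The remark about absorbing conjuncts $\beta(\overline{\theta}(\overline{p}_i))$ is superfluous, since your own formula for $\kappa_f$ contains no such conjuncts, but it does no harm.
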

\begin{proof}
It is sufficient to observe that the formula in the definition above represents the kernel pair of a morphism $\tau(\overline{p}) \vdash_L \sigma(\overline{\psi}/\overline{r})$ in $\mathsf{Alg}(L)_{fp}^{op}$.
\end{proof}

\begin{definition}[CoSP]
    A logic $L$ is said to have the \emph{coequivalence separation property} (CoSP) if for each formula $\tau(\overline{p})$, each $\tau$-coequivalence relation is separating.
\end{definition}

From Proposition \ref{prop: coequivalence to dual} and Proposition \ref{prop: effective to coequivalence}, the following is the categorical meaning of the CoSP:

\begin{proposition}\label{prop:CoSP iff eff}
    A logic $L$ with the Beth property has the CoSP if and only if in $\mathsf{Alg}(L)_{fp}^{op}$ all equivalence relations are effective.
\end{proposition}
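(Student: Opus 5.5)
The statement follows by combining Proposition \ref{prop: coequivalence to dual} and Proposition \ref{prop: effective to coequivalence}. The only extra work is to check that every equivalence relation in $\mathsf{Alg}(L)_{fp}^{op}$ is, up to isomorphism, of the syntactic form treated there.

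I would first normalise the data. Every object of $\mathsf{Alg}(L)_{fp}^{op}$ is isomorphic to some $\tau(\overline{p})$, and effectiveness is invariant under isomorphism of the underlying object, so it suffices to consider equivalence relations over objects of the form $\tau(\overline{p})$. The product $X\otimes X$ is $\tau(\overline{p}_1)\wedge\tau(\overline{p}_2)$. A subobject $R\hookrightarrow X\otimes X$ is a monomorphism in the opposite category, i.e. an epimorphism of finitely presented algebras out of $\mathcal{F}_L(\overline{p}_1,\overline{p}_2)/(\tau(\overline{p}_1)\wedge\tau(\overline{p}_2))$. This is the one non-routine point. Equivalence relations are, by convention under Remark \ref{rem: rs and beths}, identified with r-equivalence relations, i.e. regular subobjects, and regular subobjects correspond to quotients. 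As recalled in Notation \ref{not: conventions}, quotients are exactly the extensions $\rho(\overline{p}_1,\overline{p}_2)\vdash_L\tau(\overline{p}_1)\wedge\tau(\overline{p}_2)$ that add no new variables. The Beth property guarantees that any epimorphic extension is equivalent to a quotient, since the new variables become explicitly definable. So every equivalence relation is isomorphic to one presented by some $\rho(\overline{p}_1,\overline{p}_2)$.

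For the direction from CoSP to effectiveness, take such an equivalence relation. By Proposition \ref{prop: coequivalence to dual}, $\rho$ is a $\tau$-coequivalence relation. CoSP makes it separating, and Proposition \ref{prop: effective to coequivalence} then gives effectiveness.

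For the converse, let $\rho$ be any $\tau$-coequivalence relation. Proposition \ref{prop: coequivalence to dual} shows it is dual to an equivalence relation over $\tau(\overline{p})$, which is effective by hypothesis. Proposition \ref{prop: effective to coequivalence} then shows $\rho$ is separating.

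The main obstacle is the normalisation step: checking that arbitrary subobjects, after accounting for r-subobjects via Beth, are presented without new variables, and that effectiveness transfers along the isomorphisms involved. Both points are routine given the cited facts.
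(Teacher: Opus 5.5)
Your proposal is correct and takes the same route as the paper, which obtains the statement directly by combining Proposition~\ref{prop: coequivalence to dual} and Proposition~\ref{prop: effective to coequivalence}. Your normalisation step uses the Beth property to replace an arbitrary epimorphic subobject of $\tau(\overline{p}_1)\wedge\tau(\overline{p}_2)$ by a quotient presented by some $\rho(\overline{p}_1,\overline{p}_2)$; the paper leaves this implicit through Remark~\ref{rem: rs and beths} and its identification of subobjects with extensions that add no new variables, so your version just spells it out.
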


\begin{remark}
    In model theoretic terms, the CoSP can be related to the framework of \textit{ imaginary elements}  \cite{poizattheoriegalois}. This was introduced by Bruno Poizat as a generalization of classical Galois theory, with the intention of providing a model-theoretic framework which generalizes  the fundamental theorem of Galois theory. Our coequivalence relations are precisely equivalence formulas. The existence of formulas $\theta(\overline{x})$ witnessing the separation corresponds to the existence of a uniform schema for eliminating imaginaries -- that is, picking equivalence classes in algebraic models. It would be interesting, in this vein, to determine the precise connection between uniform elimination of imaginaries and the CoSP, in a similar way to the known connections between interpolation and amalgamation (e.g., \cite{metcalfeamalgamation}), or uniform interpolation and model completions (e.g., \cite{VANGOOLmetcalfetsinakisuniforminterpolation}). 
\end{remark}

We now give a few examples to illustrate the meaning of the CoSP:

\begin{example}[\textsf{CPC} has the CoSP]\label{ex: cpc cosp proof}
    We will show that every coequivalence relation over $\mathsf{CPC}$ is separating, through elementary methods \footnote{Notice that by Proposition \ref{prop:CoSP iff eff}, it would suffice to observe that in $\mathbf{BA}_{fp}^{op}$ all equivalence relations are effective, which is well-known since such a category is equivalent to $\mathbf{FinSet}$, and the latter category is a topos, where all equivalence relations are always effective.}. Assume that $\rho(\overline{p}_{1},\overline{p}_{2})$ is a coequivalence relation (over $\top$). The conditions of being a coequivalence relation means three things:
    \begin{itemize}
        \item (reflexivity) for any truth-assignment  $v\colon \overline{p}_{1},\overline{p}_{2}\to \{0,1\}$ where for each $p\in \overline{p}$, $v(p_{1})=v(p_{2})$, then $v$ makes $\rho(\overline{p}_{1},\overline{p}_{2})$ true;
        \item (symmetry) if $v$ makes $\rho(\overline{p}_{1},\overline{p}_{2})$ true, then if $v^{\dagger}$ is defined by setting, for each $p\in \overline{p}$, $v^{\dagger}(p_{1})=v(p_{2})$ and $v^{\dagger}(p_{2})=v(p_{1})$, we have that $v^{\dagger}$ also makes $\rho(\overline{p}_{1},\overline{p}_{2})$ true;
        \item (transitivity) if $v$ 
    makes $\rho(\overline{p}_{1},\overline{p}_{2})$ true and $v' \colon \overline{p}_{2},\overline{p}_{3} \to \{0,1\}$ is another valuation which makes $\rho(\overline{p}_{2},\overline{p}_{3})$ true, and such that for each $p\in \overline{p}$, $v(p_{2})=v'(p_{2})$, 
    then the valuation $v''=v\cup v'$
    makes $\rho(\overline{p}_{1},\overline{p}_{3})$ true.
    \end{itemize} 

    Given such a coequivalence relation, define an equivalence relation on the set $\mathsf{Val}(\overline{p})$ of valuations $v$ over $\overline{p}$ as follows: $v\sim v'$ if and only if there is a valuation $v_{0}$ over $\overline{p}_{1},\overline{p}_{2}$ such that $v_{0}{\restriction}_{\overline{p}_{1}}=v$ and $v_{0}{\restriction}_{\overline{p}_{2}}=v'$ and $v_{0}$ validates $\rho(\overline{p}_{1},\overline{p}_{2})$. This is an equivalence relation by the properties just listed. For each equivalence class $C_{i}$, let $\psi_{i}$ be the formula given by the disjunction of the (atoms corresponding to the) valuations in $C_{i}$, so that a valuation $v$ over $\overline{p}$ validates $\psi_i$ if and only if $v \in C_i$. Then we claim that:
    \begin{equation*}
         \rho(\overline{p}_{1},\overline{p}_{2})\dashv\vdash \bigwedge_{i=1}^{m}\psi_{i}(\overline{p}_{1})\leftrightarrow \psi_{i}(\overline{p}_{2}).
    \end{equation*}

    Notice that a valuation $v$ on $\overline{p}_1$ and $\overline{p}_2$ validating the conjunction means that ``$v{\restriction}_{\overline{p}_{1}} \in C_i$ if and only if $v{\restriction}_{\overline{p}_{2}} \in C_i$'' for all $i$; this is equivalent to say that $v{\restriction}_{\overline{p}_{1}} \sim v{\restriction}_{\overline{p}_{2}}$, since $v{\restriction}_{\overline{p}_{1}}$ must belong to some $C_i$. Then, 
    by construction any valuation that validates $\rho$ validates the conjunction. Vice versa, if $v$ is a valuation validating the conjunction, 
    then $v{\restriction}_{\overline{p}_{1}} \sim v{\restriction}_{\overline{p}_{2}}$. This means by assumption that there is some $v_{0}$ satisfying $\rho$, such that 
    $v_{0}{\restriction}_{\overline{p}_{1}}=v{\restriction}_{\overline{p}_{1}}$ and $v_{0}{\restriction}_{\overline{p}_{2}}=v{\restriction}_{\overline{p}_{2}}$. 
    This forces $v = v_0$, hence $v$ validates $\rho$.
\end{example}

It will be useful to contrast this picture with the setting of $\mathsf{S5}$. For that purpose we recall the frames for $\mathsf{S5}$.

\begin{definition}[S5-frames]\label{def: s5-frame}
    A pair $(X,R)$ is called an $\mathsf{S5}$\textit{-frame} if $R$ is an equivalence relation over a set $X$. We say that the frame is \textit{locally finite} if for each $x\in X$, $R[x]=\{y\in X : xRy\}$ is finite. It is \textit{finite} if $X$ is finite. Obviously, finite frames are locally finite. Given such a frame, a subset $C\subseteq X$ is called a \textit{cluster} if it is an $R$-equivalence class.
    We say that $X$ is a \textit{simple} frame if $X$ itself is a cluster.
\end{definition}

\begin{definition}[S5-model]
    An $\mathsf{S5}$-\emph{model} over $\overline{p}$ is a pair $\mathfrak{M},x$, where $\mathfrak{M}$ is a pair consisting of an $\mathsf{S5}$-frame $(X,R)$, together with a function $X \to \mathsf{Val}(\overline{p})$ (we denote by $v_y \colon \overline{p} \to \{0,1\}$ the valuation associated to $y \in X$), and $x \in X$. If $\overline{q} \subseteq \overline{p}$, then $\mathfrak{M}^{\overline{q}},x$ denotes the obvious restriction of $\mathfrak{M},x$.
\end{definition}

We assume the reader is familiar with the usual Kripke semantics for modal logic. Recall also that by Kripke completeness of $\mathsf{S5}$ with respect to finite simple frames, we have that for any pair of formulas $\phi,\psi$:
\begin{equation*}
    \phi\vdash_{\mathsf{S5}}\psi \iff \forall  \mathfrak{M} \text{ based on a finite cluster}, (\forall x\in \mathfrak{M}, \mathfrak{M},x\Vdash \phi) \Rightarrow (\forall x\in \mathfrak{M}, \mathfrak{M},x\Vdash \phi). 
\end{equation*}

Recall the following representation of modal formulas: 

\begin{definition}[Nabla form]
    Let $\chi_{1},\dots,\chi_{n}$ be a collection of formulas. We denote by $\nabla(\chi_{1},\dots,\chi_{n})$, the \textit{nabla form}, the formula $\bigwedge_{i=1}^{n}\Diamond\chi_{i}\wedge \Box\left(\bigvee_{i=1}^{n}\chi_{i}\right)$.
\end{definition}

\begin{example}[\textsf{S5} contains non-separating coequivalences]\label{ex: s5 non separating}
    Let $p$ be a propositional variable; we call a valuation $v$ over $p_{1},p_{2}$ \textit{matching}, or $(p_{1},p_{2})$-matching, if $v(p_{1})=v(p_{2})$. Denote by $\mathsf{Val}(p_{1},p_{2})$ the set of valuations, and by $\mathsf{Mat}(p_{1},p_{2})$ the set of matching valuations over these variables.   
    Then consider the formula:
    \begin{equation*}
        \rho(p_{1},p_{2})=\bigvee_{S\subseteq \mathsf{Mat}(p_1,p_2)}\nabla(v \colon v \in S) \vee \bigvee_{T\subseteq \mathsf{Val}(p_{1},p_{2})\setminus \mathsf{Mat}(p_{1},p_{2})}\nabla(v \colon v\in T),
    \end{equation*}
    Then $\rho$ is a coequivalence relation over $\mathsf{S5}$. To see this, note that $\rho(p,p)$ is logically equivalent to the disjunction of all possible (irreducible) simple \textsf{S5}-models over the variable $p$, hence it is a tautology. 
    Moreover, $\rho(p_{1},p_{2})$ is logically equivalent to $\rho(p_{2},p_{1})$. Finally assume that $\mathfrak{M},x$ is a model over $p_{1},p_{2},p_{3}$, that validates $\rho(p_{1},p_{2})$ and $\rho(p_{2},p_{3})$. Let $\mathfrak{M}^{p_{1},p_{2}},x$ and $\mathfrak{M}^{p_{2},p_{3}},x$ be the two restrictions of this model to the specified propositional letters. By considering the four cases, we see that $\mathfrak{M}^{p_{1},p_{3}}$ will be matching whenever both pairs match or both do not match, and will be non-matching otherwise. Thus transitivity follows.
   
    Now we show that this coequivalence is not separating. Assume that there is a sequence of formulas $\psi_{1}(p),\dots,\psi_{m}(p)$ such that $\rho\vdash \bigwedge_{i=1}^{m}\psi_{i}(p_{1})\leftrightarrow \psi_{i}(p_{2})$. We prove that $\bigwedge_{i=1}^{m}\psi_{i}(p_{1})\leftrightarrow \psi_{i}(p_{2}) \vdash \rho$ cannot be the case. Let $\mathfrak{M}$ be a model over a (finite) cluster, containing exactly one point $y \in \mathfrak{M}$ such that $v_y = v$, for each $v\notin \mathsf{Mat}(p_{1},p_{2})$. By assumption, $\mathfrak{M}\Vdash \rho(p_{1},p_{2})$, so $\mathfrak{M}\Vdash \bigwedge_{i=1}^{m}\psi_{i}(p_{1})\leftrightarrow\psi_{i}(p_{2})$. Let $\mathfrak{M}^{+}$ be the model obtained by adding to the cluster $\mathfrak{M}$ a point $e$ corresponding to a single fixed valuation $v$ which is matching. We claim that then it is still the case that $\mathfrak{M}^{+}\Vdash \bigwedge_{i=1}^{m}\psi_{i}(p_{1})\leftrightarrow \psi_{i}(p_{2})$.
    To see this, observe that, for each $x \in \mathfrak{M}$ (i.e.\ $x \in \mathfrak{M}^+$ different from $e$), $(\mathfrak{M}^+)^{p_1},x \cong \mathfrak{M}^{p_1},x$: this is because $(\mathfrak{M}^+)^{p_1},x$ admits a surjective p-morphism to $\mathfrak{M}^{p_1},x$ ($y \in \mathfrak{M}^+$ different from $e$ is sent to itself, while $e$ is sent to a non-matching point $e^*$, whose valuation restricted to $p_1$ coincides with that of $e$). Similarly $(\mathfrak{M}^{+})^{p_{2}},x\cong \mathfrak{M}^{p_{2}},x$. Thus we have that for each $\psi_{i}$, and $x\in \mathfrak{M}$, by composing these equivalences with the assumption that $\mathfrak{M},x\Vdash \psi_{i}(p_{1})\leftrightarrow \psi_{i}(p_{2})$:    \begin{align*}
        \mathfrak{M}^{+},x\Vdash \psi_{i}(p_{1}) \iff  \mathfrak{M}^+,x\Vdash \psi_{i}(p_{2}). &
    \end{align*}
    Moreover, $(\mathfrak{M}^+)^{p = p_1},e \cong (\mathfrak{M}^+)^{p = p_2},e$ (via the isomorphism sending $e$ to itself and each $x \in \mathfrak{M}$ to its symmetric $x^{\dagger}$ exchanging the valuations of $p_1$ and $p_2$). Reasoning as before, we have that $\mathfrak{M}^+,e \Vdash \psi_i(p_1)$ if and only if $\mathfrak{M}^+,e \Vdash \psi_i(p_2)$.
    We conclude that $\mathfrak{M}^{+}\Vdash \bigwedge_{i=1}^{m}\psi_{i}(p_{1})\leftrightarrow\psi_{i}(p_{2})$.

    However, $\mathfrak{M}^{+},e\nVdash \rho(p_{1},p_{2})$, since the point $e$ neither sees only matching valuations nor does it see only valuations which do not match, by construction. Thus we have that $\rho$ does not separate variables. 
\end{example}

\begin{remark}\label{rem: sheaf-like condition}
From Example \ref{ex: s5 non separating}, we see that the CoSP -- at least in the case of locally tabular logics like $\mathsf{S5}$ -- is related to a sheaf-like condition, similar to the uniqueness of gluing. The reason why the CoSP holds in \textsf{CPC} is the obvious fact that a valuation $v$ over $\overline{p}_1$ and $\overline{p}_2$ is uniquely determined by its restrictions $v{\restriction}_{\overline{p}_{1}}$ and $v{\restriction}_{\overline{p}_{2}}$. This fails in \textsf{S5}: there can be non-bisimilar models $\mathfrak{M},x$ and $\mathfrak{N},y$ over $\overline{p}_1$ and $\overline{p}_2$ such that $\mathfrak{M}^{\overline{p}_1},x \cong \mathfrak{N}^{\overline{p}_1},y$ and $\mathfrak{M}^{\overline{p}_2},x \cong \mathfrak{N}^{\overline{p}_2},y$. More so, as observed in \cite[Theorem 6.10]{barrcoexactness}, in the extension $\mathsf{S5}_{2}$ of $\mathsf{S5}$ where clusters are restricted to at most $2$-elements, the property again holds for similar reasons.
\end{remark}

As remarked in the introduction, Ghilardi and Zawadowski showed that $\mathsf{IPC}$ fails to have the CoSP, and subsequently asked whether a natural weakening of it -- categorically corresponding to \emph{almost Barr-exactness} -- actually holds. In light of Example \ref{ex: s5 non separating}, such a question can be asked already for $\mathsf{S5}$. Moreover, they left open what the logical and algebraic content of such a property should be. We turn to this in the next section.

\section{Local coequivalence relations and effective descent}\label{sec: effective descent and coequivalence}

\subsection{Local transition terms}

In the previous section we saw that coequivalence relations encode in a logical formula some implicit identification of elements. This is a general phenomenon in mathematics: in topology, for instance, equivalence relations on a space likewise provide implicit identifications between elements. Nevertheless, there might be equivalences  which identify elements in terms of some algebraic data -- such as what happens in the setting of topology with local homeomorphisms, and sheaves over a space.

In a logical setting, one can likewise ask  whether such local kinds of equivalences appear. For this purpose, we introduce the main technical tool arising in the context of our work. For ease of notation, given $\alpha(\overline{p})$, we write $\mathsf{Ext}(\alpha)=\{\beta(\overline{p},\overline{q}) : \beta\vdash_L\alpha\}$.

\begin{definition}\label{def:transition}
    Let $\gamma(\overline{p},\overline{q},\overline{r})\in \mathsf{Ext}(\beta(\overline{p},\overline{q}))$. A sequence $\overline{\xi}$ of formulas over the variables $\overline{p},\overline{q}_{1},\overline{q}_{2},\overline{r}$, namely $\xi_{r}(\overline{p},\overline{q}_{1},\overline{q}_{2},\overline{r})$ for each $r \in \overline{r}$, is called a sequence of \textit{local transition terms} for $\gamma$ if the following hold:
    \begin{enumerate}
        \item ($\gamma$-compatibility):  $\gamma(\overline{p},\overline{q}_{1},\overline{r})\wedge\beta(\overline{p},\overline{q}_{2})\vdash_{L}\gamma(\overline{p},\overline{q}_{2},\overline{\xi})$.
        \item (Identity): For each $r \in \overline{r}$, $\gamma(\overline{p},\overline{q},\overline{r})\vdash_{L} r \leftrightarrow \xi_{r}(\overline{p},\overline{q},\overline{q},\overline{r})$.
        \item (Cocycle):  For each $r \in \overline{r}$,  $\gamma(\overline{p},\overline{q}_{1},\overline{r})\wedge \beta(\overline{p},\overline{q}_{2})\wedge \beta(\overline{p},\overline{q}_{3})\vdash_{L} \xi_{r}(\overline{p},\overline{q}_{1},\overline{q}_{3},\overline{r})\leftrightarrow \xi_{r}(\overline{p},\overline{q}_{2},\overline{q}_{3},\overline{\xi})$.
    \end{enumerate}
\end{definition}

To obtain some intuition for these conditions, assume that $\overline{p}=\emptyset$, $\overline{q}_{1}=\{q_{1}\}$, $\overline{q}_{2}=\{q_{2}\}$ and $\overline{r}=\{r\}$. Then one can think of $\xi$ as defining, for every pair of terms $b,b'$, a function $\xi_{b,b'}(r)$, which we can think of as a ``local transition from $b$ to $b'$''. In this context the above conditions have the following meaning:
\begin{enumerate}
    \item $\gamma$-compatibility means that $\xi$-transitions occur in the context of $\gamma$ and $\beta$;
    \item Identity says that the transition from $b$ to $b$ is the identity;
    \item The cocycle condition simply ensures that transitions compose.
\end{enumerate}

\begin{example}[Local transition term in S5]\label{ex: local transition term}
We provide an example of a local transition term in \textsf{S5}. Let $\beta(q) \coloneqq \Diamond q\wedge\Diamond \neg q$, and $\gamma(q,r) \coloneqq \gamma_M(q,r) \vee \gamma_U(q,r)$, where
$$\gamma_M(q,r) \coloneqq \nabla (q \wedge r, \neg q \wedge \neg r) \quad \text{ and } \quad \gamma_U(q,r) \coloneqq \nabla (q \wedge \neg r, \neg q \wedge r).$$
Given propositional variables $q$ and $r$, define $\mu(q,r) \coloneqq (q \wedge r) \vee (\neg q \wedge \neg r)$; observe that $\neg \mu(q,r) = (q \wedge \neg r) \vee (\neg q \wedge r)$. We show that
$$\xi(q_1,q_2,r) \coloneqq \mu(q_2,\mu(q_1,r)) = (q_2 \wedge \mu(q_1,r)) \vee (\neg q_2 \wedge \neg \mu(q_1,r))$$
is a local transition term for $\gamma$.

Note that for a valuation $v$, we have that $v(\xi) = 1$ whenever ``$v(q_2) = 1$ if and only if $v(q_1) = v(r)$''; from this, (Identity) and (Cocycle) follow by unfolding the definitions. We show ($\gamma$-compatibility). We prove that $\gamma(q_1,r)\wedge\beta(q_2)\vdash\gamma(q_2,\xi)$. Let $\mathfrak{M},x$ be a model satisfying $\gamma(q_1,r)$ and $\beta(q_2)$; without loss of generality, assume that $\mathfrak{M},x$ satisfies $\gamma_M(q_1,r) = \nabla(q_1 \wedge r, \neg q_1 \wedge \neg r)$. To show that $\mathfrak{M},x$ satisfies $\gamma(q_2,\xi)$, we prove that $\mathfrak{M},x \Vdash \gamma_M(q_2,\xi)$. For what we said before,
$$\neg \xi(q_1,q_2,r) = \neg \mu(q_2,\mu(q_1,r)) = (q_2 \wedge \neg \mu(q_1,r)) \vee (\neg q_2 \wedge \mu(q_1,r)),$$
hence $\gamma_M(q_2,\xi) = \nabla(q_2 \wedge \xi, \neg q_2 \wedge \neg \xi) = \nabla(q_2 \wedge \mu(q_1,r), \neg q_2 \wedge \mu(q_1,r))$. Given $x' \in \mathfrak{M}$, by $\mathfrak{M},x' \Vdash \gamma_M(q_1,r)$, we have that $\mathfrak{M},x'$ satisfies one between: $q_1 \wedge r$ and $q_2$; $q_1 \wedge r$ and $\neg q_2$; $\neg q_1 \wedge \neg r$ and $q_2$; $\neg q_1 \wedge \neg r$ and $\neg q_2$. As a consequence, $\mathfrak{M},x'$ satisfies one between $q_2 \wedge \mu(q_1,r)$ and $\neg q_2 \wedge \mu(q_1,r)$. Vice versa, using that $\mathfrak{M},x$ satisfies $\Diamond q_2 \wedge\Diamond \neg q_2$, we can find $x' \in \mathfrak{M}$ such that $\mathfrak{M},x'$ satisfies $q_2$; using that $\mathfrak{M},x$ satisfies $\gamma_M(q_1,r) = \nabla(q_1 \wedge r, \neg q_1 \wedge \neg r)$, we also get that $\mathfrak{M},x'$ satisfies $\mu(q_1,r)$; putting everything together, $\mathfrak{M},x' \Vdash q_2 \wedge \mu(q_1,r)$. A similar argument proves the existence of $x'' \in \mathfrak{M}$ such that $\mathfrak{M},x'' \Vdash \neg q_2 \wedge \mu(q_1,r)$.
\end{example}

\begin{definition}
    Let $\alpha(\overline{p})$, $\beta(\overline{p},\overline{q})\in \mathsf{Ext}(\alpha)$ and $\gamma(\overline{p},\overline{q},\overline{r})\in \mathsf{Ext}(\beta)$, equipped with a sequence of local transition terms $\overline{\xi}$. The \emph{local coequivalence relation} $\rho_{\gamma,\overline{\xi}}$ associated with $\gamma$ and $\overline{\xi}$, is the coequivalence relation over $\gamma$,
    \begin{equation*}
        \rho_{\gamma,\overline{\xi}}=\gamma(\overline{p}_{1},\overline{q}_{1},\overline{r}_{1})\wedge \gamma(\overline{p}_{2},\overline{q}_{2},\overline{r}_{2})\wedge \bigwedge_{p \in \overline{p}} (p_{1}\leftrightarrow p_{2})\wedge \bigwedge_{r \in \overline{r}}(\xi_{r}(\overline{p}_{1},\overline{q}_{1},\overline{q}_{2},\overline{r}_{1})\leftrightarrow r_{2}).
    \end{equation*}
\end{definition}

Note that a local coequivalence relation $\rho_{\gamma,\overline{\xi}}$ is indeed a $\gamma$-coequivalence relation: it is in the domain $\gamma$ by $\gamma$-compatibility, satisfies reflexivity by (Identity), and is symmetric and transitive by (Cocycle).

\begin{remark}
As noted in Remark \ref{rem: sheaf-like condition}, the CoSP corresponds seemingly to a sheaf-like condition, ensuring the uniqueness of gluings of models over their restrictions. Local transition terms allow us to weaken this condition: rather than requiring the uniqueness of gluings for models over an even splitting of the propositional variables, we require it only for models which lie in the same path of the local transition, where
a model $\mathfrak{M}_1,x_1$ over $\overline{p}_1,\overline{q}_1,\overline{r}_1$ has a transition to $\mathfrak{M}_2,x_2$ over $\overline{p}_2,\overline{q}_2,\overline{r}_2$ if there exists $\mathfrak{N},y$ over the variables $\overline{p},\overline{q}_1,\overline{q}_2,\overline{r}$, such that $\mathfrak{N}^{\overline{p}_1=\overline{p},\overline{q}_1,\overline{r}_1=\overline{r}},y \cong \mathfrak{M}_1,x_1$ and $\mathfrak{N}^{\overline{p}_2=\overline{p},\overline{q}_2,\overline{r}_2 = \overline{\xi}(\overline{p},\overline{q}_1,\overline{q}_2,\overline{r})},y \cong \mathfrak{M}_2,x_2$.
\end{remark}

From this we extract the following property:

\begin{definition}
    A logic $L$ has the \emph{local coequivalence separation property} (LCoSP) if for each formula $\alpha(\overline{p})$, each $\beta(\overline{p},\overline{q})\in \mathsf{Ext}(\alpha)$, and each $\gamma(\overline{p},\overline{q},\overline{r})\in \mathsf{Ext}(\beta)$ together with a sequence of local transition terms $\overline{\xi}$ for $\gamma$, the local coequivalence relation $\rho_{\gamma,\overline{\xi}}$ separates variables. 
\end{definition}

\subsection{Descent data and Barr-exactness}

Local coequivalence relations correspond to a natural weakening of Barr-exactness \cite{JanelidzeSobralTholen2003}, using the notion of ``descent data", which we now introduce. We refer to \cite{JanelidzeSobralTholen2003,facetsI} for the following notions.

Let $\mathbf{C}$ be a category with pullbacks and $p : E \to B$ a morphism in $\mathbf{C}$. For an object $B$ in $\mathbf{C}$, let $\mathbf{C}/B$ denote the slice category over $B$, that is the category whose objects are pairs $(Y,g)$, with $g \colon Y \to B$ in $\mathbf{C}$, and whose morphisms $(Y,g) \to (Y',g')$ are morphisms $h \colon Y \to Y'$ in $\mathbf{C}$ such that $g' h = g$. There is a pair of adjoint functors
\[\begin{tikzcd}
	{p_! \dashv p^* : \mathbf{C}/B} & {\mathbf{C}/E}
	\arrow[from=1-1, to=1-2]
\end{tikzcd}\]
(with $p^*$ pulling back along $p$ and with $p_!$ composing with $p$ from the left). This induces a \emph{monad}\footnote{See e.g. \cite{monads} for basic concepts and definitions on monads, and algebras for a monad.} on $\mathbf{C}/E$. We denote by $\Des(p)$ the \textit{category of descent data for} $p$, i.e.\ the category of \emph{algebras}, in the sense of Eilenberg-Moore, for such a monad. An object of $\Des(p)$, called \textit{descent datum for} $p$, is then a triple $(X,f;\xi)$, with $(X,f) \in \mathbf{C}/E$ and $\xi \colon X \otimes_B E \to X$ in $\mathbf{C}$ satisfying the following compatibility conditions:
\begin{equation*}\label{eq:Des}
\begin{tikzcd}
	X & {X \otimes_B E} && {(X \otimes_B E) \otimes_B E} & {X \otimes_B E} \\
	X & E && {X \otimes_B E} & X
	\arrow["{{(1_X,f)}}", from=1-1, to=1-2]
	\arrow["{{1_X}}"', from=1-1, to=2-1]
	\arrow[""{name=0, anchor=center, inner sep=0}, "\xi"{description}, from=1-2, to=2-1]
	\arrow["{{\pi_2}}", from=1-2, to=2-2]
	\arrow["{{\xi \otimes_B 1_E}}", from=1-4, to=1-5]
	\arrow["{{\pi_1 \otimes_B 1_E}}"', from=1-4, to=2-4]
	\arrow["{(3)}"{description}, draw=none, from=1-4, to=2-5]
	\arrow["\xi", from=1-5, to=2-5]
	\arrow["f"', from=2-1, to=2-2]
	\arrow["\xi"', from=2-4, to=2-5]
	\arrow["{(2)}"{description}, draw=none, from=0, to=1-1]
	\arrow["{(1)}"{description}, draw=none, from=0, to=2-2]
\end{tikzcd}\tag{D}
\end{equation*}
A morphism $(X,f;\xi) \to (X',f';\xi')$ in $\Des(p)$ is a morphism $h \colon (X,f) \to (X',f')$ in $\mathbf{C}/E$ such that
\[\begin{tikzcd}
	{X \otimes_B E} & {X' \otimes_B E} \\
	X & {X'}
	\arrow["{h \otimes_B 1_E}", from=1-1, to=1-2]
	\arrow["\xi"', from=1-1, to=2-1]
	\arrow["{\xi'}", from=1-2, to=2-2]
	\arrow["h"', from=2-1, to=2-2]
\end{tikzcd}\]
commutes. This induces a comparison functor $\Phi^p \colon \mathbf{C}/B \to \Des(p)$, sending $(Y,g) \in \mathbf{C}/B$ to the descent data $(Y \otimes_B E, \pi_2; \xi)$, with $\xi$ given by $\pi_1 \otimes_B 1_E \colon (Y \otimes_B E) \otimes_B E \to Y \otimes_B E$.

\begin{definition}
We say that $p$ is a \emph{descent morphism} if $\Phi^p$ is full and faithful. If $\Phi^p$ is an equivalence of categories, then $p$ is said to be an \emph{effective descent morphism}.
\end{definition}

\begin{remark}
If $p$ is an (effective) descent morphism, then it is a universal regular epimorphism, meaning that its pullback along any morphism is a regular epimorphism. If $\mathbf{C}$ is Barr-exact, then the converse holds (see \cite{facetsI}). However, even assuming regularity, the fact that all regular epimorphisms are effective descent morphisms is not sufficient to ensure that $\mathbf{C}$ is Barr-exact \cite[pp.395,(ii)]{JanelidzeSobralTholen2003}.
\end{remark}

\begin{definition}
A regular category $\mathbf{C}$ is said to be \emph{almost Barr-exact} if all regular epimorphisms are effective descent morphisms.
\end{definition}

Almost Barr-exactness of duals of categories of algebras has been studied in the case of loops, magmas and groups \cite{Zangurashvili2003}. In regular categories, we have a nice characterization of effective descent morphisms:

\begin{theorem}\label{thm: all coequivalence relations are effective}
Let $\mathbf{C}$ be a regular category, $p\colon E\to B$ a morphism. Then:
\begin{enumerate}
    \item Given $(X,f;\xi) \in \Des(p)$ a descent datum, the morphism $\tilde{\xi} \coloneqq (\pi_1,\xi) \colon X \otimes_B E \to X \otimes X$ is an equivalence relation (the \emph{equivalence relation associated} to the descent datum $(X,f;\xi)$);
    \item If $p$ is a regular epimorphism, then it is an effective descent morphism if and only if for each $(X,f;\xi)\in \Des(p)$, the equivalence relation $\tilde{\xi}$ is effective.
\end{enumerate}
\end{theorem}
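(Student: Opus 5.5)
For part 1, I will work with generalized elements. A morphism into $X\otimes_B E$ from a test object $T$ is a pair $(x,e)$ with $x\colon T\to X$, $e\colon T\to E$ and $p f x = p e$. The map $\tilde\xi$ sends such a pair to $(x,\xi(x,e))$.

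The first thing to check is that $\tilde\xi$ is a monomorphism, so that it is a relation in the sense of Definition \ref{def:eqrel}. Suppose $x=x'$ and $\xi(x,e)=\xi(x',e')$. Applying $f$ and using square (1), $f\xi = \pi_2$, gives $e = f\xi(x,e) = f\xi(x',e') = e'$. Hence $\tilde\xi$ is monic.

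The three structure maps are then built directly from the descent axioms:
\begin{itemize}
\item[] \emph{Reflexivity.} Take $r=(1_X,f)$. Square (2), $\xi(x,fx)=x$, gives exactly $\tilde\xi r = (1,1)$.
\item[] \emph{Transitivity.} Given $(x,e)$ and $(x',e')$ with $x'=\xi(x,e)$, send them to $(x,e')$. This is well defined because $pe' = pfx' = pf\xi(x,e) = pe = pfx$. Square (3) gives $\xi(\xi(x,e),e') = \xi(x,e')$, which is the required compatibility with the projections.
\item[] \emph{Symmetry.} Send $(x,e)$ to $(\xi(x,e), fx)$. By (3) and then (2), $\xi(\xi(x,e),fx) = \xi(x,fx) = x$, so the pair has the swapped image under $\tilde\xi$.
\end{itemize}
All of these are Yoneda-style verifications and should be routine.

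For part 2, I will use that $\Phi^p$ is fully faithful whenever $p$ is a pullback-stable regular epimorphism in a regular category; this is standard, since regular epis are universal there and hence descent morphisms. Effective descent then reduces to essential surjectivity of $\Phi^p$.

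\emph{($\Rightarrow$)} If $(X,f;\xi)\cong\Phi^p(Y,g)$, then $X\cong Y\otimes_B E$ and $\xi = \pi_1\otimes 1$. One computes that $\tilde\xi$ is the kernel pair of the projection $Y\otimes_B E\to Y$, since pairs $((y,e),(y',e'))$ with $y=y'$ correspond to $((y,e),e')$. So $\tilde\xi$ is effective.

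\emph{($\Leftarrow$)} Suppose $\tilde\xi$ is the kernel pair of some $h\colon X\to Q$. I would replace $h$ by the coequalizer $q\colon X\to Y$ of its kernel pair, which exists by regularity and has the same kernel pair. Next, $f$ induces $g\colon Y\to B$ with $gq = pf$. This requires $pf$ to coequalize the kernel pair, i.e.\ $pf\pi_1 = pf\xi$ on $X\otimes_B E$, and this holds by (1) and the pullback condition.

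It remains to show that $(q,f)\colon X\to Y\otimes_B E$ is an isomorphism compatible with the descent structures. This is the main obstacle. Surjectivity-type arguments are unavailable in general, so I plan to use the following strategy.

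To show $(q,f)$ is monic: if $qx=qx'$ and $fx=fx'$, then $(x,x')$ lies in the kernel pair, so $x' = \xi(x,e)$ for some $e$. Applying $f$ and (1) gives $e = fx' = fx$, and then (2) gives $x' = \xi(x,fx) = x$.

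To show $(q,f)$ is a regular epimorphism: factor it as $q\otimes_B 1_E$ composed with a section-like map. Concretely, the map $X\otimes_B E\to Y\otimes_B E$, $(x,e)\mapsto(qx,e)$, is a pullback of the regular epi $q$ and is therefore a regular epi. This map equals $(q,f)\circ\xi$, because $q\xi = q\pi_1$ (as $\tilde\xi$ is the kernel pair of $q$) and $f\xi = \pi_2$. Hence $(q,f)$ is a regular epi. Being also monic, it is an isomorphism.

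Finally, compatibility of $\xi$ with $\pi_1\otimes 1$ under this isomorphism follows from $q\xi = q\pi_1$ together with (1).
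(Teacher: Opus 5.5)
Your proposal is correct. The paper gives no argument here at all: it simply defers to Theorem 3.7(c) of Janelidze--Sobral--Tholen. Your self-contained proof therefore goes further than the paper does.

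\textbf{Part 1.} This is a generalized-element check that uses only the pullback $X\otimes_B E$ and axioms (1)--(3), so it holds in any category with pullbacks.
\begin{itemize}
\item Monicity of $\tilde\xi$ follows from $f\xi=\pi_2$.
\item Reflexivity follows from (2).
\item Transitivity follows from (3).
\item Symmetry follows from (3) and then (2).
\end{itemize}

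\textbf{Part 2.} You split effective descent into two pieces.
\begin{itemize}
\item \emph{Full faithfulness of $\Phi^p$.} This is the standard fact that pullback-stable regular epimorphisms are descent morphisms. It is the only place where the hypothesis that $p$ is a regular epi enters.
\item \emph{Essential surjectivity.} You show that $(q,f)\colon X\to Y\otimes_B E$ is an isomorphism of descent data, with $q$ the coequalizer of $\tilde\xi$. It is monic by (1) and (2). Moreover $(q,f)\circ\xi=q\otimes_B 1_E$, which is a pullback of $q$ by pasting (since $gq=pf$), hence a regular epimorphism.
\end{itemize}

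One small tightening: you do not need to conclude that $(q,f)$ is itself a regular epimorphism. In any category, a monomorphism through which a regular epimorphism factors is an isomorphism, and this closes the step at once.

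\textbf{Comparison with the paper.} Your route shows exactly where regularity is used: in the existence of a coequalizer of a kernel pair with that same kernel pair, and in pullback-stability of regular epis (for full faithfulness and for $q\otimes_B 1_E$). As a by-product, it exhibits the square formed by $q,f,g,p$ as a pullback. That is essentially the content of the folklore Theorem~\ref{thm:deseq}, which the paper also only cites.

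The remaining external input is that universal regular epimorphisms are descent morphisms. It can be proved in a few lines via Beck's criterion. The counit of $p_!\dashv p^*$ at $(Y,g)$ is the projection $Y\otimes_B E\to Y$, which is a pullback of $p$. Full faithfulness of $\Phi^p$ then amounts to these projections being coequalizers of their kernel pairs.
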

\begin{proof}
See \cite[Theorem 3.7 (c)]{JanelidzeSobralTholen2003}
\end{proof}

We say that an equivalence relation $R\hookrightarrow X\otimes X$ \emph{comes from a descent datum over} $f : X \to E$ \emph{for} $p : E \to B$ if it is isomorphic to $\tilde{\xi}$, for some $(X,f;\xi) \in \Des(p)$. The following appears to be folklore.

\begin{theorem}\label{thm:deseq}
Let $(\pi_1,\pi_2) : R\hookrightarrow X\otimes X$ be an equivalence relation. Then $R$ comes from a descent datum over $f : X \to E$ for $p : E \to B$ if, and only if,
\begin{equation*}\label{eq:deseq}
\begin{tikzcd}
	{R} & {X} \\
	{E} & {B}
	\arrow["{\pi_1}", from=1-1, to=1-2]
	\arrow["{f \pi_2}"', from=1-1, to=2-1]
	\arrow["{pf}", from=1-2, to=2-2]
	\arrow["p"', from=2-1, to=2-2]
\end{tikzcd}\tag{P}
\end{equation*}
is a diagram of pullback. If that is the case, $\pi_2 \colon R \to X$ is the descent datum whose associated equivalence relation is $R$.
\end{theorem}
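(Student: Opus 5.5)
The plan is to prove both directions directly from the definition of $\Des(p)$ and of the associated equivalence relation in Theorem \ref{thm: all coequivalence relations are effective}. The key observation is this: for a descent datum $(X,f;\xi)$, the associated relation $\tilde{\xi}=(\pi_1,\xi)\colon X\otimes_B E\to X\otimes X$ has domain the pullback of $pf$ along $p$. Its first leg is the pullback projection $\pi_1$, and its second leg is $\xi$. Moreover, by condition (1) of (D) we have $f\xi=\pi_2$, where $\pi_2\colon X\otimes_B E\to E$ is the other pullback projection. So, up to renaming, the square (P) for $R=X\otimes_B E$ is exactly the defining pullback square of $X\otimes_B E$.

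For the direction ``$R$ comes from a descent datum $\Rightarrow$ (P) is a pullback'', I would fix an isomorphism $R\cong X\otimes_B E$ over $X\otimes X$. This identifies $\pi_1^R$ with $\pi_1$ and $\pi_2^R$ with $\xi$. Then $f\pi_2^R$ corresponds to $f\xi=\pi_2$ by (1) of (D), so (P) is isomorphic to the pullback square defining $X\otimes_B E$, and is therefore a pullback.

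For the converse, I assume (P) is a pullback, so $R\cong X\otimes_B E$ with projections $\pi_1$ and $f\pi_2$. I set $\xi:=\pi_2\colon R\to X$ and check the three conditions of (D).
\begin{enumerate}
\item Condition (1), $f\xi = $ (second pullback projection), holds by construction.
\item For condition (2), the map $(1_X,f)\colon X\to X\otimes_B E$ corresponds under the pullback to the unique map with components $1_X$ and $f$. The reflexivity map $r\colon X\to R$ has $\pi_1 r=1$ and $f\pi_2 r=f$, so by uniqueness $(1_X,f)=r$, and $\xi r=\pi_2 r=1_X$.
\item Condition (3) is where transitivity enters. I identify $(X\otimes_B E)\otimes_B E$ with $R\otimes_X R$, the pullback of $\pi_1$ along $\pi_2$ in the orientation of Definition \ref{def:eqrel}. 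This uses pasting of pullbacks together with the fact that $\pi_1\colon R\to X$ is itself a pullback of $p$. Under this identification, $\xi\otimes_B 1_E$ and $\pi_1\otimes_B 1_E$ become the two structure projections. Both composites $\xi(\xi\otimes_B 1_E)$ and $\xi(\pi_1\otimes_B 1_E)$ then equal $\pi_2$ applied to the ``far'' component of the transitivity composite.
\end{enumerate}
The step I expect to be the main obstacle is making this last identification precise. I would handle it by testing (3) on generalized elements. Since $R\rightarrowtail X\otimes X$ is monic, an element of $(X\otimes_B E)\otimes_B E$ is a triple $(x,e,e')$ with $pf x=pe=pe'$. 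Via the pullback (P), the pair $(x,e)$ determines a unique $y$ with $xRy$ and $fy=e$, and then $(y,e')$ determines a unique $z$ with $yRz$ and $fz=e'$. Both sides of (3) yield an element $w$ with $fw=e'$. The left side gives $w=z$ directly. The right side gives the unique $w$ with $xRw$, and by transitivity $z$ has this property, so $w=z$ as well.

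Finally, the associated equivalence relation of $\xi=\pi_2$ is $(\pi_1,\pi_2)$, which is $R$ itself. This proves the last sentence of the statement.
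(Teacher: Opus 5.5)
Your argument is correct, but it takes a different route from the paper. The paper does no direct verification. It cites the equivalence (Janelidze--Sobral--Tholen, Section 5.A) between $\Des(p)$ and discrete fibrations over the kernel pair $E\otimes_B E\rightrightarrows E$ of $p$. Under that equivalence, $f$ induces an internal functor from $R\rightrightarrows X$ to the kernel pair exactly when $pf\pi_1=pf\pi_2$. After pasting with the pullback defining $E\otimes_B E$, the discrete-fibration condition becomes exactly the requirement that (P) be a pullback.

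You instead check the Eilenberg--Moore axioms by hand:
\begin{enumerate}
\item condition (1) holds by construction once $\xi:=\pi_2$;
\item condition (2) follows by identifying $(1_X,f)$ with the reflexivity map via the universal property of (P);
\item condition (3) follows from transitivity together with uniqueness in (P), and your generalized-element computation for it is right.
\end{enumerate}
Your route gives a self-contained proof that shows exactly which axioms are used: reflexivity only for (2), transitivity only for (3), and symmetry never. The paper's citation is shorter and places the statement inside the general picture of descent data as discrete fibrations over the kernel pair.

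There is one small slip in your third item. Under the identification $(X\otimes_B E)\otimes_B E\cong R\otimes_X R$, the map $\xi\otimes_B 1_E$ does correspond to the projection $\kappa_2$. However, $\pi_1\otimes_B 1_E$ corresponds to the transitivity map $t$, not to a structure projection; it is the projection onto $X\otimes_B E$ that corresponds to $\kappa_1$. With this correction, condition (3) reads $\pi_2\kappa_2=\pi_2 t$, which is one of the defining equations of $t$. Your element-wise paragraph already handles this correctly, so no gap results.
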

\begin{proof}
This follows immediately from the equivalence between the category $\Des(p)$ introduced above and the category of discrete fibrations over the kernel pair of $p$ (see \cite[Section 5.A]{JanelidzeSobralTholen2003}).
\end{proof}

We thus refer to $R$ as a \textit{descent equivalence relation} if $R$ satisfies the above conditions relative to some $f$ and $p$.
\begin{corollary}\label{cor:almostchar}
A regular category $\mathbf{C}$ is almost Barr-exact if, and only if, all descent equivalence relations are effective.
\end{corollary}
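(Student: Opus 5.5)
We need to prove Corollary \ref{cor:almostchar}: a regular category $\mathbf{C}$ is almost Barr-exact if and only if every descent equivalence relation is effective. The plan is to combine Theorem \ref{thm: all coequivalence relations are effective}(2), which characterizes effective descent for regular epimorphisms, with Theorem \ref{thm:deseq}, which identifies the equivalence relations of the form $\tilde{\xi}$ intrinsically. The one subtlety is that ``descent equivalence relation'' quantifies over \emph{all} morphisms $p$, whereas almost Barr-exactness only concerns regular epimorphisms. Handling this mismatch is the main obstacle.

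For the direction ($\Leftarrow$), take a regular epimorphism $p\colon E\to B$ and a descent datum $(X,f;\xi)\in\Des(p)$. By Theorem \ref{thm: all coequivalence relations are effective}(1), $\tilde{\xi}$ is an equivalence relation. By Theorem \ref{thm:deseq} it comes from a descent datum, so it is a descent equivalence relation and hence effective by hypothesis. Theorem \ref{thm: all coequivalence relations are effective}(2) then shows that $p$ is an effective descent morphism.

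For the direction ($\Rightarrow$), let $R$ be a descent equivalence relation, so that square (P) is a pullback for some $f\colon X\to E$ and $p\colon E\to B$, with $p$ arbitrary. I would reduce to the case where $p$ is a regular epimorphism by factoring it in the regular category $\mathbf{C}$ as $p=m e$, where $e\colon E\to I$ is a regular epimorphism and $m\colon I\to B$ is a monomorphism. Pullbacks along a monomorphism are computed equally well over $I$ as over $B$: a cone over $(pf, p)$ is the same thing as a cone over $(ef, e)$, since $m$ is monic. Hence (P) is also a pullback with $p$ replaced by $e$ and $pf$ replaced by $ef$. By Theorem \ref{thm:deseq}, $R$ then comes from a descent datum over $f$ for the regular epimorphism $e$. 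Since $e$ is an effective descent morphism by almost Barr-exactness, Theorem \ref{thm: all coequivalence relations are effective}(2) gives that $R\cong\tilde{\xi}$ is effective.

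The only step needing care is checking that the pullback property transfers along the mono factor, which is the routine cone argument just given. The rest is a direct application of the two cited theorems.
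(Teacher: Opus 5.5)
Your proof is correct and is essentially the argument the paper has in mind: the paper states the corollary without proof, as an immediate consequence of Theorem~\ref{thm: all coequivalence relations are effective}(2) combined with Theorem~\ref{thm:deseq}. Your extra step handles a point the paper leaves implicit, namely that descent equivalence relations are defined relative to an arbitrary $p$. You factor $p$ as a regular epimorphism $e$ followed by a monomorphism $m$, and use that $m$ is monic to show that (P) is a pullback over $B$ if and only if it is one over the image of $p$.
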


\subsection{Local coequivalences and descent}
In this section we establish the connection between the logical property of LCoSP and descent. Recall from \ref{not: conventions} that we identify a formula $\alpha(\overline{p})$ with the object in $\mathsf{Alg}(L)_{fp}^{op}$ dual to the finitely presented algebra $\mathcal{F}_L(\overline{p})/\alpha(\overline{p})$ and that derivations $\beta(\overline{q}) \vdash_L \alpha(\overline{\theta}/\overline{p})$ represent morphisms from $\beta$ to $\alpha$ in $\mathsf{Alg}(L)_{fp}^{op}$.

To understand the connection, first we lay out all necessary elements, and relate them dually to their meaning in the previous section:

\begin{enumerate}
    \item We consider $\alpha(\overline{p})$ and $\beta(\overline{p},\overline{q})$ and a morphism $\beta(\overline{p},\overline{q}) \vdash_L \alpha(\overline{p})$ in $\mathsf{Alg}(L)_{fp}^{op}$ (read $p \colon E \to B$).
    \item Morphisms to $\beta$ correspond to $\gamma(\overline{p},\overline{q},\overline{r}) \vdash_L \beta(\overline{p},\overline{q})$ (read $f \colon X \to E$).
    \item For any such morphism, one can compute the pullback of $\gamma(\overline{p},\overline{q},\overline{r})\vdash_{L}\alpha(\overline{p})$ along $\beta(\overline{p},\overline{q})\vdash_{L}\alpha(\overline{p})$ (read $X\otimes_{B}E$). This comes equipped with two projections:
    \begin{equation*}
    \gamma(\overline{p},\overline{q}_1,\overline{r}) \wedge \beta(\overline{p},\overline{q}_2) \vdash_L \gamma(\overline{p},\overline{q}_1/\overline{q},\overline{r}) \text{ and } \gamma(\overline{p},\overline{q}_1,\overline{r}) \wedge \beta(\overline{p},\overline{q}_2) \vdash_L \beta(\overline{p},\overline{q}_2/\overline{q}).
    \end{equation*}
    \item A \emph{descent datum} over $\gamma(\overline{p},\overline{q},\overline{r}) \vdash_L \beta(\overline{p},\overline{q})$ for $\beta(\overline{p},\overline{q})\vdash_{L}\alpha(\overline{p})$ is therefore given by a \emph{connecting morphism}  $$\gamma(\overline{p},\overline{q}_{1},\overline{r})\wedge\beta(\overline{p},\overline{q}_{2})\vdash_{L}\gamma(\overline{\mu}/\overline{p},\overline{\nu}/\overline{q},\overline{\xi}/\overline{r}),$$ (read as $X\otimes_{B}E\to X$), making the diagrams (1-3) of (\ref{eq:Des}) commute.
\end{enumerate}

Note that in light of the previous point, we can see that \emph{local transition terms} have the precise shape necessary to be connecting morphisms. We now prove that they satisfy the necessary conditions:

\begin{proposition}
Let $L$ be normal modal logic,  $\alpha(\overline{p})$ a formula, $\beta(\overline{p},\overline{q})\in \mathsf{Ext}(\alpha)$, and $\gamma(\overline{p},\overline{q},\overline{r})\in \mathsf{Ext}(\beta)$. To give a morphism from $\gamma(\overline{p},\overline{q}_1,\overline{r}) \wedge \beta(\overline{p},\overline{q}_2)$ to $\gamma(\overline{p},\overline{q},\overline{r})$ in $\mathsf{Alg}(L)_{fp}^{op}$ that makes the diagram (1) of (\ref{eq:Des}) commute, is equivalent to giving a sequence $\overline{\xi} = (\xi_r(\overline{p},\overline{q}_1,\overline{q}_2,\overline{r}) \colon r \in \overline{r})$ that satisfies ($\gamma$-compatibility) of Definition \ref{def:transition}. Moreover:
\begin{enumerate}
    \item[a)] the diagram (2) of (\ref{eq:Des}) commutes if and only if $\overline{\xi}$ satisfies (Identity) of Definition \ref{def:transition};
    \item[b)] the diagram (3) of (\ref{eq:Des}) commutes if and only if $\overline{\xi}$ satisfies (Cocycle) of Definition \ref{def:transition}.
\end{enumerate}
\end{proposition}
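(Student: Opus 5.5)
The plan is to translate each diagram of (\ref{eq:Des}) into an equation between substitutions, using Notation \ref{not: conventions}. Throughout, a morphism between two formulas is a tuple of substitution terms, and two such tuples give the same morphism precisely when the domain formula proves the componentwise biconditionals. The object $X\otimes_B E$ is $\gamma(\overline{p},\overline{q}_1,\overline{r})\wedge\beta(\overline{p},\overline{q}_2)$, with projections $\pi_1=(\overline{p},\overline{q}_1,\overline{r})$ and $\pi_2=(\overline{p},\overline{q}_2)$.

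\textbf{Main equivalence and diagram (1).} A morphism $\xi\colon X\otimes_B E\to X$ is a derivation
\[\gamma(\overline{p},\overline{q}_1,\overline{r})\wedge\beta(\overline{p},\overline{q}_2)\vdash_L\gamma(\overline{\mu}/\overline{p},\overline{\nu}/\overline{q},\overline{\xi}/\overline{r}).\]
Diagram (1) says that $f\xi=\pi_2$, where $f=(\overline{p},\overline{q})$. This composite is the substitution $(\overline{\mu},\overline{\nu})$, and $\pi_2$ is $(\overline{p},\overline{q}_2)$. So (1) holds iff the domain proves $\mu_p\leftrightarrow p$ and $\nu_q\leftrightarrow q_2$. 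In that case we may replace $\overline{\mu},\overline{\nu}$ by $\overline{p},\overline{q}_2$ without changing the morphism. The morphism condition then becomes exactly ($\gamma$-compatibility), since replacing provably equivalent subformulas preserves derivability under hypotheses in a normal modal logic. Conversely, any $\overline{\xi}$ satisfying ($\gamma$-compatibility) defines such a morphism, and it makes (1) commute by construction. This gives the first equivalence, with $\overline{\xi}$ determined up to provable equivalence over the domain.

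\textbf{Part a).} The map $(1_X,f)\colon X\to X\otimes_B E$ is the substitution $\overline{q}_1\mapsto\overline{q}$, $\overline{q}_2\mapsto\overline{q}$, fixing $\overline{p}$ and $\overline{r}$. Composing with $\xi$ gives $X\to X$ with components $(\overline{p},\overline{q},\overline{\xi}(\overline{p},\overline{q},\overline{q},\overline{r}))$. This equals $1_X$ iff $\gamma(\overline{p},\overline{q},\overline{r})\vdash_L r\leftrightarrow\xi_r(\overline{p},\overline{q},\overline{q},\overline{r})$ for each $r$, which is (Identity). The $\overline{p}$ and $\overline{q}$ components agree trivially.

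\textbf{Part b).} First compute $(X\otimes_B E)\otimes_B E$. I expect it to be $\gamma(\overline{p},\overline{q}_1,\overline{r})\wedge\beta(\overline{p},\overline{q}_2)\wedge\beta(\overline{p},\overline{q}_3)$. This is an iterated pullback over $\alpha$, and its dual is a pushout of finitely presented algebras, i.e.\ the conjunction over disjoint copies of the new variables with $\overline{p}$ shared. I need to fix which $E$-copy each map sees: the outer $E$ carries $\overline{q}_3$, and $\pi_1\otimes_B 1_E$ forgets $\overline{q}_2$.

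\emph{Lower path.} The composite $\xi\circ(\pi_1\otimes_B 1_E)$ has components $(\overline{p},\overline{q}_3,\overline{\xi}(\overline{p},\overline{q}_1,\overline{q}_3,\overline{r}))$.

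\emph{Upper path.} The map $\xi\otimes_B 1_E$ sends the variables to $(\overline{p},\overline{q}_2,\overline{\xi}(\overline{p},\overline{q}_1,\overline{q}_2,\overline{r}),\overline{q}_3)$. Composing with $\xi$ gives $(\overline{p},\overline{q}_3,\overline{\xi}(\overline{p},\overline{q}_2,\overline{q}_3,\overline{\xi}))$.

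The $\overline{p}$ and $\overline{q}$ components agree automatically. Equality of the morphisms is then exactly (Cocycle).

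The only delicate points are the bookkeeping. One is identifying the iterated pullback and the maps $\xi\otimes_B 1_E$ and $\pi_1\otimes_B 1_E$ correctly as substitutions. The other is justifying, in the main equivalence, the replacement of $\overline{\mu},\overline{\nu}$ by $\overline{p},\overline{q}_2$. I expect the identification of $\xi\otimes_B 1_E$, i.e.\ checking via the pullback's universal property which variables it fixes, to be the main obstacle; everything else is direct unfolding.
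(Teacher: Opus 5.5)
Your proposal is correct and follows essentially the same route as the paper. Like the paper, you translate each diagram of (\ref{eq:Des}) into an equality of substitutions modulo the domain formula, use (1) to normalize $\overline{\mu},\overline{\nu}$ to $\overline{p},\overline{q}_2$, and read off ($\gamma$-compatibility), (Identity) and (Cocycle). Your explicit computation of $\xi\otimes_B 1_E$ and $\pi_1\otimes_B 1_E$ as substitutions on $\gamma(\overline{p},\overline{q}_1,\overline{r})\wedge\beta(\overline{p},\overline{q}_2)\wedge\beta(\overline{p},\overline{q}_3)$ is right, and it fills in part b), which the paper dismisses as ``similar''.
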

\begin{proof}
As we said before, a morphism from $\gamma(\overline{p},\overline{q}_1,\overline{r}) \wedge \beta(\overline{p},\overline{q}_2)$ to $\gamma(\overline{p},\overline{q},\overline{r})$ in $\mathsf{Alg}(L)_{fp}^{op}$ is specified by sequences of formulas $\overline{\mu} = (\mu_p \colon p \in \overline{p})$, $\overline{\nu} = (\nu_q \colon q \in \overline{q})$ and $\overline{\xi} = (\xi_r \colon r \in \overline{r})$ in the variables $\overline{p},\overline{q}_1,\overline{q}_2,\overline{r}$. Commutativity of (1) establishes that the projection $\gamma(\overline{p},\overline{q}_1,\overline{r}) \wedge \beta(\overline{p},\overline{q}_2) \vdash_L \beta(\overline{p},\overline{q}_2/\overline{q})$ coincides with the composition of $\gamma(\overline{p},\overline{q}_1,\overline{r}) \wedge \beta(\overline{p},\overline{q}_2) \vdash_L \gamma(\overline{\mu}/\overline{p},\overline{\nu}/\overline{q},\overline{\xi}/\overline{r})$ with $\gamma(\overline{p},\overline{q},\overline{r}) \vdash_L \beta(\overline{p},\overline{q})$; the latter is $\gamma(\overline{p},\overline{q}_1,\overline{r}) \wedge \beta(\overline{p},\overline{q}_2) \vdash_L \beta(\overline{\mu}/\overline{p},\overline{\nu}/\overline{q})$ (compositions is simply the composition of substitutions). The two morphisms coincide if and only if, up to $\gamma(\overline{p},\overline{q}_1,\overline{r}) \wedge \beta(\overline{p},\overline{q}_2)$, the substitution $\overline{\mu}$ is equivalent to $\overline{p}$ and the substitution $\overline{\nu}$ is equivalent to $\overline{q}_2$; we can therefore simplify the specification of the initial morphism (as it presents the same morphism) in the following equivalent way
$$\gamma(\overline{p},\overline{q}_1,\overline{r}) \wedge \beta(\overline{p},\overline{q}_2) \vdash_L \gamma(\overline{p},\overline{q}_2/\overline{q},\overline{\xi}/\overline{r}),$$
which is exactly ($\gamma$-compatibility).

Moreover, the diagram (2) commutes if an only if the composition of $\gamma(\overline{p},\overline{q},\overline{r}) \vdash_L \gamma(\overline{p},\overline{q}/\overline{q}_1,\overline{r}) \wedge \beta(\overline{p},\overline{q}/\overline{q}_2)$ (read as $(1_x,f) \colon X \to X \otimes_B E$) and our morphism $\gamma(\overline{p},\overline{q}_1,\overline{r}) \wedge \beta(\overline{p},\overline{q}_2) \vdash_L \gamma(\overline{p},\overline{q}_2/\overline{q},\overline{\xi}/\overline{r})$, namely $\gamma(\overline{p},\overline{q},\overline{r}) \vdash_L \gamma(\overline{p},\overline{q},\overline{\xi}(\overline{p},\overline{q}/\overline{q}_1,\overline{q}/\overline{q}_2,\overline{r})/\overline{r})$, is the identity; this is equivalent to say that
$$\gamma(\overline{p},\overline{q},\overline{r}) \vdash_L r \leftrightarrow \xi_r(\overline{p},\overline{q}/\overline{q}_1,\overline{q}/\overline{q}_2,\overline{r})$$
for each $r \in \overline{r}$, which is exactly (Identity). The proof of b) is similar (observe that $(X \otimes_B E) \otimes_B E$ is $\gamma(\overline{p},\overline{q}_1,\overline{r}) \wedge \beta(\overline{p},\overline{q}_2) \wedge \beta(\overline{p},\overline{q}_3)$, with obvious projections).
\end{proof}

Recall that the local coequivalence relation associated to $\gamma$ and $\overline{\xi}$, is given explicitly by:
    \begin{equation*}
        \rho_{\gamma,\overline{\xi}}=\gamma(\overline{p}_{1},\overline{q}_{1},\overline{r}_{1})\wedge \gamma(\overline{p}_{2},\overline{q}_{2},\overline{r}_{2})\wedge \bigwedge_{p \in \overline{p}} (p_{1}\leftrightarrow p_{2})\wedge \bigwedge_{r \in \overline{r}}(\xi_{r}(\overline{p}_{1},\overline{q}_{1},\overline{q}_{2},\overline{r}_{1})\leftrightarrow r_{2}).
    \end{equation*}

\begin{proposition}\label{prop: correspondence of descent with local coequivalence}
The finitely presented algebra $\mathcal{F}_L(\overline{p}_{1},\overline{q}_{1},\overline{r}_{1},\overline{p}_{2},\overline{q}_{2},\overline{r}_{2})/\rho_{\gamma,\overline{\xi}}$ is dual to the equivalence relation associated to the descent datum $\gamma(\overline{p},\overline{q}_1,\overline{r}) \wedge \beta(\overline{p},\overline{q}_2) \vdash_L \gamma(\overline{p},\overline{q}_2/\overline{q},\overline{\xi}/\overline{r})$ over $\gamma(\overline{p},\overline{q},\overline{r}) \vdash_L \beta(\overline{p},\overline{q})$ for $\beta(\overline{p},\overline{q}) \vdash_L \alpha(\overline{p})$.
\end{proposition}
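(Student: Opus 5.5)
The plan is to compute the equivalence relation associated to the descent datum directly, using Theorem \ref{thm: all coequivalence relations are effective}(1). That relation is the morphism $\tilde{\xi}=(\pi_1,\xi)\colon X\otimes_B E\to X\otimes X$. Here $X\otimes_B E$ is represented by $\gamma(\overline{p},\overline{q}_1,\overline{r})\wedge\beta(\overline{p},\overline{q}_2)$, and $X\otimes X$ by $\gamma(\overline{p}_1,\overline{q}_1,\overline{r}_1)\wedge\gamma(\overline{p}_2,\overline{q}_2,\overline{r}_2)$. Following Notation \ref{not: conventions}, $\tilde{\xi}$ is specified by a substitution into the variables $\overline{p}_1,\overline{q}_1,\overline{r}_1,\overline{p}_2,\overline{q}_2,\overline{r}_2$. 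The first projection gives $(\overline{p}/\overline{p}_1,\overline{q}_1/\overline{q}_1,\overline{r}/\overline{r}_1)$, and the connecting morphism gives $(\overline{p}/\overline{p}_2,\overline{q}_2/\overline{q}_2,\overline{\xi}/\overline{r}_2)$. The first step is to write this substitution down explicitly.

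Next, since $\tilde{\xi}$ is a monomorphism in a regular category, I would identify its image as a regular subobject, i.e.\ as a quotient of $\mathcal{F}_L(\overline{p}_1,\overline{q}_1,\overline{r}_1,\overline{p}_2,\overline{q}_2,\overline{r}_2)/(\gamma_1\wedge\gamma_2)$. The claim is that this quotient is presented by $\rho_{\gamma,\overline{\xi}}$. Concretely, I would construct mutually inverse morphisms in $\mathsf{Alg}(L)_{fp}^{op}$ between $\rho_{\gamma,\overline{\xi}}$ and $\gamma(\overline{p},\overline{q}_1,\overline{r})\wedge\beta(\overline{p},\overline{q}_2)$, both compatible with the maps into $X\otimes X$.

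The first direction is the morphism $\gamma(\overline{p},\overline{q}_1,\overline{r})\wedge\beta(\overline{p},\overline{q}_2)\vdash_L\rho_{\gamma,\overline{\xi}}(\overline{p},\overline{q}_1,\overline{r},\overline{p},\overline{q}_2,\overline{\xi})$. Here the conjunct $\gamma_1$ is immediate, $\gamma_2$ is ($\gamma$-compatibility), and the conjuncts $p\leftrightarrow p$ and $\xi_r\leftrightarrow\xi_r$ are tautologies.

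The second direction is the morphism $\rho_{\gamma,\overline{\xi}}\vdash_L\gamma(\overline{p}_1,\overline{q}_1,\overline{r}_1)\wedge\beta(\overline{p}_1,\overline{q}_2)$, given by the substitution $\overline{p}\mapsto\overline{p}_1$, $\overline{q}_1\mapsto\overline{q}_1$, $\overline{r}\mapsto\overline{r}_1$, $\overline{q}_2\mapsto\overline{q}_2$. It is valid because $\rho$ proves $\gamma(\overline{p}_2,\overline{q}_2,\overline{r}_2)$, hence $\beta(\overline{p}_2,\overline{q}_2)$ since $\gamma\in\mathsf{Ext}(\beta)$, and $\rho$ also proves $\overline{p}_1\leftrightarrow\overline{p}_2$. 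Replacing equivalents then gives $\beta(\overline{p}_1,\overline{q}_2)$; this uses that $L$ is normal, so that biconditionals are congruences.

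Finally I would check the two composites. One composite is literally the identity substitution. The other sends each variable to itself, except that $\overline{p}_2\mapsto\overline{p}_1$ and $\overline{r}_2\mapsto\overline{\xi}(\overline{p}_1,\overline{q}_1,\overline{q}_2,\overline{r}_1)$; modulo $\rho_{\gamma,\overline{\xi}}$ these agree with the identity precisely because of the conjuncts $p_1\leftrightarrow p_2$ and $\xi_r\leftrightarrow r_2$. Commutation with the maps into $X\otimes X$ is checked the same way. Since $\rho\vdash\gamma_1\wedge\gamma_2$, $\rho$ is a regular subobject of $X\otimes X$, and it is isomorphic over $X\otimes X$ to $\tilde{\xi}$. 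The main obstacle is bookkeeping: keeping the variable renamings straight, and justifying substitution of provably equivalent formulas under $\rho$. Neither requires Beth beyond identifying subobjects with regular ones (Remark \ref{rem: rs and beths}).
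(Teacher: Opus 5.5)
Your proof is correct and is essentially the paper's. You build the same pair of mutually inverse morphisms (the paper's (Univ) and (Univ$^{*}$)), justified by ($\gamma$-compatibility) and by replacement of equivalents under $\rho_{\gamma,\overline{\xi}}$, and you run the same composite checks via the conjuncts $p_1\leftrightarrow p_2$ and $\xi_r\leftrightarrow r_2$.

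The only difference is packaging. The paper shows the square (P) is a pullback with $\rho_{\gamma,\overline{\xi}}$ at the corner and appeals to Theorem \ref{thm:deseq}. You instead compare directly with $\tilde{\xi}=(\pi_1,\xi)$ over $X\otimes X$. This also shows $\tilde{\xi}$ is monic, so the regularity you invoke in passing is not actually needed.
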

\begin{proof} Observe that the diagram (\ref{eq:deseq}) commutes. Indeed, the composition of $\rho_{\overline{\xi},\gamma} \vdash_L \gamma(\overline{p}_1/\overline{p},\overline{q}_1/\overline{q},\overline{r}_1/\overline{r})$ and $\gamma(\overline{p},\overline{q},\overline{r}) \vdash_L \alpha(\overline{p})$, which is $\rho_{\overline{\xi},\gamma} \vdash_L \alpha(\overline{p}_1/\overline{p})$, is equal to the composition of $\rho_{\overline{\xi},\gamma} \vdash_L \beta(\overline{p}_2/\overline{p},\overline{q}_2/\overline{q})$ and $\beta(\overline{p},\overline{q}) \vdash_L \alpha(\overline{p})$, which is $\rho_{\overline{\xi},\gamma} \vdash_L \alpha(\overline{p}_2/\overline{p})$: this is because $\rho_{\overline{\xi},\gamma} \vdash_L p_1 \leftrightarrow p_2$ for all $p \in \overline{p}$. By the universal property of the pullback, there exists a unique morphism from $\rho_{\overline{\xi},\gamma}$ to $\gamma(\overline{p},\overline{q}_1,\overline{r}) \wedge \beta(\overline{p},\overline{q}_2)$, making the projections commute; it is straightforward to verify that such a morphism is
\begin{equation*}\label{eq:comp}
\rho_{\gamma,\overline{\xi}}(\overline{p}_1,\overline{q}_1,\overline{r}_1,\overline{p}_2,\overline{q}_2,\overline{r}_2) \vdash_L \gamma(\overline{p}_1/\overline{p},\overline{q}_1,\overline{r}_1/\overline{r}) \wedge \beta(\overline{p}_1/\overline{p},\overline{q}_2).\tag{Univ}
\end{equation*}
(it is indeed a derivation, since $\rho_{\overline{\xi},\gamma} \vdash_L \bigwedge_{p \in \overline{p}} (p_1 \leftrightarrow p_2)$ and $\gamma(\overline{p}_2,\overline{q}_2,\overline{r}_2) \vdash_L \beta(\overline{p}_2,\overline{q}_2)$, using  substitution). By Theorem \ref{thm:deseq}, the claim follows once we prove that the morphism (\ref{eq:comp}) is an isomorphism and that, up to (\ref{eq:comp}), the second projection $\rho_{\overline{\xi},\gamma} \vdash_L \gamma(\overline{p}_2/\overline{p},\overline{q}_2/\overline{q},\overline{r}_2/\overline{r})$ is the descent datum $\gamma(\overline{p},\overline{q}_1,\overline{r}) \wedge \beta(\overline{p},\overline{q}_2) \vdash_L \gamma(\overline{p},\overline{q}_2/\overline{q},\overline{\xi}/\overline{r})$. To prove that (\ref{eq:comp}) is an isomorphism, it suffices to observe that
\begin{align*}\label{eq:compinv}
\tag{Univ$^{*}$}
\gamma(\overline{p},\overline{q}_1,\overline{r}) \wedge \beta(\overline{p},\overline{q}_2) &\vdash_L \rho_{\gamma,\overline{\xi}}(\overline{p}/\overline{p}_1,\overline{q}_1,\overline{r}/\overline{r}_1,\overline{p}/\overline{p}_2,\overline{q}_2,\overline{\xi}/\overline{r}_2)
\end{align*}
is its inverse: the above is a derivation by ($\gamma$-compatibility); the composite $(\mathrm{Univ}\circ \mathrm{Univ}^{*})$ sends each propositional letter to itself, and is therefore trivial; whilst the only non-trivial substitutions of the composition $(\mathrm{Univ}^{*}\circ \mathrm{Univ})$ are the following: $\overline{p}_{2}$ is sent to $\overline{p}_{1}$, which $\rho$ proves equivalent to $\overline{p}_{2}$, and $\overline{r}_{2}$ to $\xi(\overline{p}_{1},\overline{q}_{1},\overline{q}_{2},\overline{r}_{1})$ which $\rho$ proves equivalent to $\overline{r}_{2}$.

Now observe that the composition of (\ref{eq:compinv}) with the second projection $\rho_{\overline{\xi},\gamma} \vdash_L \gamma(\overline{p}_2/\overline{p},\overline{q}_2/\overline{q},\overline{r}_2/\overline{r})$ is exactly the descent datum $\gamma(\overline{p},\overline{q}_1,\overline{r}) \wedge \beta(\overline{p},\overline{q}_2) \vdash_L \gamma(\overline{p},\overline{q}_2/\overline{q},\overline{\xi}/\overline{r})$.
\end{proof}

\begin{theorem}\label{thm: characterization of almost Barr-exactness}
    For $L$ a logic with the Beth property, the following are equivalent:
    \begin{enumerate}
        \item $L$ has the local coequivalence separation property.
        \item In $\mathsf{Alg}(L)_{fp}^{op}$ all descent equivalence relations are effective.
    \end{enumerate}
    If $L$ has $\exists_{\mathrm{MIP}}$ and the Beth property, then this is equivalent to $\mathsf{Alg}(L)_{fp}^{op}$ being almost Barr-exact.
\end{theorem}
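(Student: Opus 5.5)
The plan is to reduce both directions to Proposition \ref{prop: correspondence of descent with local coequivalence} together with Proposition \ref{prop: effective to coequivalence}, and then to use Corollary \ref{cor:almostchar} for the final clause. The key observation is that, up to isomorphism, every descent situation in $\mathsf{Alg}(L)_{fp}^{op}$ has the syntactic shape used in the definition of local coequivalence relations. By Notation \ref{not: conventions}, any morphism $p\colon E\to B$ can be presented as $\beta(\overline{p},\overline{q})\vdash_L\alpha(\overline{p})$, with $\beta\in\mathsf{Ext}(\alpha)$. Any morphism $f\colon X\to E$ can likewise be presented as $\gamma(\overline{p},\overline{q},\overline{r})\vdash_L\beta(\overline{p},\overline{q})$, with $\gamma\in\mathsf{Ext}(\beta)$; here we extend the theory of $\beta$ in new variables and rename the domain presentation accordingly. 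Since effectiveness of an equivalence relation is invariant under isomorphism of subobjects and of the base object, we may work with these normal forms throughout.

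For (1)$\Rightarrow$(2), let $R\hookrightarrow X\otimes X$ be a descent equivalence relation, relative to some $f$ and $p$. By Theorem \ref{thm:deseq}, $R$ is isomorphic to the relation $\tilde{\xi}$ associated to a descent datum $(X,f;\xi)$. After normalizing $p,f$ as above, the preceding Proposition (the one matching diagrams (1)--(3) of (\ref{eq:Des}) with $\gamma$-compatibility, Identity and Cocycle) shows that the connecting morphism is given, up to equality of morphisms, by a sequence $\overline{\xi}$ of local transition terms for $\gamma$. By Proposition \ref{prop: correspondence of descent with local coequivalence}, $\tilde{\xi}$ is dual to $\rho_{\gamma,\overline{\xi}}$. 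LCoSP says that $\rho_{\gamma,\overline{\xi}}$ is separating, so it is effective by Proposition \ref{prop: effective to coequivalence}, and hence $R$ is effective.

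For (2)$\Rightarrow$(1), take $\alpha,\beta,\gamma$ and local transition terms $\overline{\xi}$. By the same Proposition, read in the other direction, $\overline{\xi}$ yields a descent datum over $\gamma\vdash_L\beta$ for $\beta\vdash_L\alpha$. By Proposition \ref{prop: correspondence of descent with local coequivalence}, $\rho_{\gamma,\overline{\xi}}$ is dual to its associated equivalence relation. By Theorem \ref{thm:deseq}, that relation is a descent equivalence relation, so by (2) it is effective. Proposition \ref{prop: effective to coequivalence} (which uses the Beth property, so that r-equivalence relations and equivalence relations coincide, as in Remark \ref{rem: rs and beths}) then gives that $\rho_{\gamma,\overline{\xi}}$ is separating.

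For the final clause, assume $\exists_{\mathrm{MIP}}$ and Beth. Then Proposition \ref{prop: regularity of the category in question} makes $\mathsf{Alg}(L)_{fp}^{op}$ regular, and Corollary \ref{cor:almostchar} identifies almost Barr-exactness with (2). I expect the main obstacle to be the normalization step in (1)$\Rightarrow$(2). One has to check carefully that an arbitrary descent datum, whose connecting morphism is specified by arbitrary substitutions $\overline{\mu},\overline{\nu},\overline{\xi}$, really reduces to the $\overline{\xi}$-only form; this is exactly what commutativity of (1) gives in the preceding Proposition. One must also check that replacing $p$ and $f$ by isomorphic presentations preserves both the descent condition and effectiveness. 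The second check holds because the pullback square (\ref{eq:deseq}) and kernel pairs are transported along isomorphisms in the slice categories.
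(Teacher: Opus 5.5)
Your proposal is correct and follows essentially the paper's route. The equivalence of (1) and (2) comes from Proposition \ref{prop: correspondence of descent with local coequivalence}, together with the preceding proposition that matches descent data with local transition terms and Proposition \ref{prop: effective to coequivalence}; the final clause comes from regularity via Proposition \ref{prop: regularity of the category in question} plus Corollary \ref{cor:almostchar}. You also spell out two points the paper's two-line proof leaves implicit: the normalization of $p$ and $f$ into $\mathsf{Ext}$-form, and the invariance of the descent condition and of effectiveness under isomorphism.
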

\begin{proof}
The fact that local coequivalences being separating coincides with descent equivalence relations being effective follows from Proposition \ref{prop: correspondence of descent with local coequivalence}. By Proposition \ref{prop: regularity of the category in question}, $\exists_{\mathrm{MIP}}$ corresponds to regularity of the category, and hence in such a setting, the category will be almost Barr-exact. 
\end{proof}

\section{Coequivalence separation in S5}\label{sec: descent equivalence relations in S5 frames}

In this final section we will focus on the case of the modal logic system $\mathsf{S5}$. We begin by outlining a representation of limits of finite $\mathsf{S5}$ frames, originally due to \cite{Bass1958} (see also \cite{Horn1978FreeSA}) which will be convenient for our purposes. We will establish a criterion for equivalence relations in $\mathsf{S5}$ to be effective. From this we will obtain an easy proof that $\mathsf{S5}$ fails to have CoSP. We will moreover show that LCoSP does hold.

\subsection{Representation}

Recall that a map $f \colon (X,R)\to (Y,S)$ between Kripke frames is called a \textit{p-morphism} if (1) whenever $xRy$ then $f(x)Sf(y)$ and (2) whenever $f(x)Sy$ then there is some $x'\in X$ such that $xRx'$ and $f(x')=y$.

\begin{definition}
    We denote by $\mathsf{S5Fr}$ the category of $\mathsf{S5}$-frames with p-morphisms, by $\mathsf{S5Fr}_{l.f}$ the full subcategory of locally finite $\mathsf{S5}$-frames, and by $\mathsf{S5Fr}_{f}$ the full subcategory of $\mathsf{S5}$ finite frames.
\end{definition}

It is obvious to see that a map $f\colon (X,R)\to (Y,S)$ between  simple (finite) $\mathsf{S5}$-frames is a p-morphism if and only if it is surjective. Thus the category of finite and simple $\mathsf{S5}$-frames is definitionally equivalent to the category $\mathbf{FinSet}_{\neq \emptyset}^s$
of non-empty finite sets and surjective functions between them. By dualising the universal algebraic idea of representing an algebra by its subdirectly irreducible algebras (in this case, all simple algebras), we can identify a locally finite $\mathsf{S5}$ frame with a \emph{family} $\mathcal{F}$ of non-empty finite sets. Under such an identification we have the following easy facts:
\begin{proposition}\label{prop: basic facts of the representation} Let $\mathcal{F}$ and $\mathcal{G}$ belong to $\mathsf{S5Fr}_{l.f}$ (or $\mathsf{S5Fr}_{f}$):
\begin{enumerate}
    \item\label{eq: definition of p-morphism} A p-morphism $f\colon \mathcal{F}\to \mathcal{G}$ is equivalent to a \emph{function} $f \colon \mathcal{F} \to \mathcal{G}$ and, a \emph{family of surjective functions}, indexed on $\mathcal{F}$, $\{f_X \colon X \to f(X)\ \vert\ X \in \mathcal{F}\}$.
    \item\label{eq: structure of subobjects} The subobjects of $\mathcal{G}$ are precisely the images of injective functions $f \colon \mathcal{F} \to \mathcal{G}$, with all $f_X$ bijective.
\end{enumerate}
\end{proposition}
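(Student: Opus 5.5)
Both claims come from a single observation: a locally finite $\mathsf{S5}$-frame $(X,R)$ is the disjoint union of its clusters. A p-morphism is determined cluster by cluster, and a p-morphism restricted to a cluster is a surjection onto a cluster. I identify the frame with the family $\mathcal{F}$ of its clusters; each cluster is a non-empty finite set by local finiteness, and finite simple frames correspond to objects of $\mathbf{FinSet}_{\neq\emptyset}^s$.

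For item 1, let $f\colon (X,R)\to(Y,S)$ be a p-morphism. Condition (1), preservation of $R$, sends each cluster $C$ into a single $S$-cluster, which we call $f(C)$. This gives a function $\mathcal{F}\to\mathcal{G}$ together with restrictions $f_C\colon C\to f(C)$. Each $f_C$ is surjective by the back condition (2): if $x\in C$ and $y\in f(C)$, then $f(x)Sy$, so some $x'$ with $xRx'$, hence $x'\in C$, satisfies $f(x')=y$. Conversely, given a function on clusters and surjections $f_C$, the union of the $f_C$ is a map $X\to Y$. It preserves $R$ because $R$-related points lie in a common cluster. It satisfies the back condition by surjectivity of $f_C$ onto the cluster $f(C)$, which is exactly $S[f(x)]$. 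The two constructions are mutually inverse, and composition corresponds to composing the cluster functions and the fibrewise surjections.

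For item 2, I characterize monomorphisms and then subobjects. If the cluster function is injective and every $f_C$ is bijective, then $f$ is injective as a map of sets, hence mono. For the converse, I test against maps out of a one-cluster frame. First, if $f(C)=f(C')$ with $C\neq C'$, I pick the frame consisting of $f(C)$ alone, viewed as a single cluster $D$. The two maps into $C$ and $C'$ obtained by composing with chosen sections of $f_C$ and $f_{C'}$ are distinct but are equalized by $f$. Those composites are maps $D\to C$ whose composite with $f_C$ is the identity; they are surjective onto $C$ because $f_C$ is a bijection only when...

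The previous sentence does not work, and the repair is the main obstacle: surjectivity onto $C$ must be arranged. Instead I take $D=C\times C'$, with surjections given by the two projections; after composing with $f$, I need a common map. A cleaner choice is the pullback $D=\{(c,c') : f_C(c)=f_{C'}(c')\}$, which is non-empty and surjects onto both $C$ and $C'$ since the $f$'s are surjective. The two projections followed by the inclusions into $X$ are distinct (different clusters) but agree after $f$, so $f$ is not mono. Second, if $f_C$ is not injective, the same kernel-pair construction $D=\{(c,c') : f_C(c)=f_C(c')\}$ has two projections $D\to C$. These are surjective and distinct, because some pair $c\neq c'$ is identified, and $f$ equalizes them. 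So monos are exactly the maps with injective cluster function and bijective $f_C$. Up to isomorphism, such a subobject is the subfamily of $\mathcal{G}$ that is its image, which gives item 2.

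Finally, for $\mathsf{S5Fr}_f$ the same arguments go through unchanged, since the test frames $D$ are finite.
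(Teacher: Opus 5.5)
Your proof is correct and follows the reasoning the paper relies on. The paper states these as easy facts, using that a locally finite $\mathsf{S5}$-frame is the disjoint union of its clusters and that p-morphisms between single clusters are exactly surjections; your fibre-product test frames $\{(c,c') : f_C(c)=f_{C'}(c')\}$ (and the kernel pair of $f_C$) are the natural way to show that monomorphisms must be injective on clusters and bijective on each cluster. In a final write-up, delete the abandoned first attempt with sections, which fails because a section of $f_C$ need not be surjective onto $C$, and present only the fibre-product argument.
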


For simplicity, the morphism will be denoted just by $f \colon \mathcal{F} \to \mathcal{G}$. The category $\Fr_{l.f}$ is then an instance of the general construction associating to a category $\mathsf{C}$ the \emph{category of families} $\Fam(\mathsf{C})$, with $\mathsf{C} = \mathbf{FinSet}_{\neq \emptyset}^s$; moreover, $\Fr_f$ is equivalent to the full subcategory $\Fam_f(\mathsf{C})$ of finite families (see \cite{essalg}). From now on, we will use the notation of families to refer to the objects and the morphisms of $\Fr_{l.f}$ and $\Fr_f$.


\begin{definition}
Let $X$ and $Y$ be non-empty finite sets. A \textit{bisimulation} from $X$ to $Y$ is a relation $A \subseteq X \times Y$ such that for each $x \in X$, there exists $y \in Y$ such that $(x,y) \in A$; and for each $y \in Y$, there exists $x \in X$ such that $(x,y) \in A$.
\end{definition}

In other words, the restrictions of the two projections $X\times Y\to X$ and $X\times Y\to Y$ to $A$ are both surjective functions. We denote by $\mathsf{B}(X,Y)$ the set of bisimulations from $X$ to $Y$. Bisimulations provide a nice way to describe pullbacks in $\Fr_{l.f}$ and $\Fr_f$.\footnote{This description goes back to \cite{Bass1958}, and \cite{Horn1978FreeSA}.} Consider the following diagram
\[\begin{tikzcd}
	& {\mathcal{W}} \\
	{\mathcal{F}} & {\mathcal{G}}
	\arrow["q", from=1-2, to=2-2]
	\arrow["p"', from=2-1, to=2-2]
\end{tikzcd}\]
in either $\Fr_{l.f}$ or $\Fr_f$. 

\begin{definition}
We say that $X \in \mathcal{W}$ and $U \in \mathcal{F}$ are $(q,p)$-\emph{compatible} if $q(X) = p(U)$. In such a case we say that $D \in \mathsf{B}(X,U)$ is $(q,p)$-\emph{admissible} if, for all $(x,u) \in D$, we have $q_X(x) = p_U(u)$. Define $\mathsf{Bis}(q,p)$ as the disjoint union
$$\mathsf{Bis}(p,q)\coloneqq \bigcup\{D \in \mathsf{B}(X,U)\ \vert\ D \text{ is } (q,p)\text{-admissible and } X,U \text{ are $(q,p)$-compatible}\}.$$
\end{definition}

From now on, $\mathsf{Bis}(q,p)$ will be seen as an object of $\Fr_{l.f}$ (each $D \in \mathsf{Bis}(q,p)$ is a non-empty finite set, being a bisimulation from some $X \in \mathcal{W}$ to some $U \in \mathcal{F}$) and of $\Fr_f$, in case both $\mathcal{W}$ and $\mathcal{F}$ are finite. Consider $D \in \mathsf{Bis}(q,p)$, say $D \in \mathsf{B}(X,U)$ $(q,p)$-admissible with $X \in \mathcal{W}$ and $U \in \mathcal{F}$ $(q,p)$-compatible. Set $\pi_1(D) := X$ and $\pi_2(D) := U$ and let
$$(\pi_1)_{D} : D \to \pi_1(D) = X \quad \text{ and } \quad (\pi_2)_{D} : D \to \pi_2(D) = U$$
be the corresponding projections. Since $(\pi_1)_{D}$ and $(\pi_2)_{D}$ are surjective functions, $D$ being a bisimulation, the data above define a pair of p-morphisms $\pi_1 : \mathsf{Bis}(q,p) \to \mathcal{W}$ and $\pi_2 : \mathsf{Bis}(q,p) \to \mathcal{F}$ (by Proposition \ref{prop: basic facts of the representation}.\ref{eq: definition of p-morphism}.).
\begin{proposition}\label{prop:pullback given by category representation}
The diagram
\[\begin{tikzcd}
	{\mathsf{Bis}(q,p)} & {\mathcal{W}} \\
	{\mathcal{F}} & {\mathcal{G}}
	\arrow["{\pi_1}", from=1-1, to=1-2]
	\arrow["{\pi_2}"', from=1-1, to=2-1]
	\arrow["q", from=1-2, to=2-2]
	\arrow["p"', from=2-1, to=2-2]
\end{tikzcd}\]
is a pullback in $\Fr_{l.f}$ (and in $\Fr_f$, if $\mathcal{W}$, $\mathcal{F}$ and $\mathcal{G}$ are finite).
\end{proposition}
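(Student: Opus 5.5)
We check the universal property directly, using the family description of p-morphisms from Proposition \ref{prop: basic facts of the representation}. Commutativity is immediate. For $D\in\mathsf{Bis}(q,p)$ with $D\in\mathsf{B}(X,U)$, the maps $q\pi_1$ and $p\pi_2$ both send $D$ to $q(X)=p(U)$, by $(q,p)$-compatibility. Their components $q_X(\pi_1)_D$ and $p_U(\pi_2)_D$ agree pointwise, since $q_X(x)=p_U(u)$ for every $(x,u)\in D$ by admissibility. So $q\pi_1=p\pi_2$ as p-morphisms.

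For the universal property, let $\mathcal{H}$ be a locally finite family with p-morphisms $a\colon\mathcal{H}\to\mathcal{W}$ and $b\colon\mathcal{H}\to\mathcal{F}$ satisfying $qa=pb$.
\begin{enumerate}
\item Fix $Z\in\mathcal{H}$ and put $X:=a(Z)$, $U:=b(Z)$. Then $q(X)=p(U)$, so $X$ and $U$ are compatible.
\item Consider the image $D_Z:=\{(a_Z(z),b_Z(z)) : z\in Z\}\subseteq X\times U$. It is a bisimulation because $a_Z$ and $b_Z$ are surjective. It is admissible because $q_Xa_Z=p_Ub_Z$ componentwise, which is what $qa=pb$ says at the level of families.
\item Define $h\colon\mathcal{H}\to\mathsf{Bis}(q,p)$ by $h(Z)=D_Z$, with component $h_Z\colon Z\to D_Z$, $z\mapsto(a_Z(z),b_Z(z))$. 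This component is surjective by construction, so $h$ is a p-morphism, and $\pi_1h=a$, $\pi_2h=b$ hold on the nose.
\end{enumerate}

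For uniqueness, suppose $h'$ also satisfies $\pi_1h'=a$ and $\pi_2h'=b$. Then $h'(Z)$ is a bisimulation between $a(Z)$ and $b(Z)$. Its component $h'_Z\colon Z\to h'(Z)$ is surjective, and composing with the projections gives $(a_Z(z),b_Z(z))$, so $h'_Z(z)=(a_Z(z),b_Z(z))$. Surjectivity then forces $h'(Z)$ to be exactly the image $D_Z$, hence $h'=h$.

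This is the one step that needs care. It works because a bisimulation is an actual subset of $X\times U$, and $\mathsf{Bis}(q,p)$ is a disjoint union indexed by these subsets (together with the pair $X,U$). So the element $h'(Z)$ of the family is determined by its underlying set, and not merely up to isomorphism. The analogous fact in $\mathbf{FinSet}^s_{\neq\emptyset}$ is that surjections out of $Z$ into subsets of $X\times U$ are determined by the two composite maps.

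It remains to check that $\mathsf{Bis}(q,p)$ is an object of the relevant category. Each $D$ is a non-empty finite set, so the family is locally finite. If $\mathcal{W}$ and $\mathcal{F}$ are finite, there are only finitely many pairs $(X,U)$ and finitely many relations on each, so $\mathsf{Bis}(q,p)$ is a finite family. Since $\Fr_f$ is a full subcategory, the same universal property holds there, and the pullback in $\Fr_{l.f}$ is also a pullback in $\Fr_f$.
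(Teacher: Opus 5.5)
Your proposal is correct and follows the paper's proof. In both, the mediating p-morphism sends each cluster $Z$ to the image of $z\mapsto(a_Z(z),b_Z(z))$. That image is a $(q,p)$-admissible bisimulation between the compatible pair $a(Z)$, $b(Z)$, because the components are surjective and $qa=pb$.

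You also spell out three things the paper leaves as ``easily verified'': commutativity of the square, the uniqueness argument (surjectivity of $h'_Z$ forces $h'(Z)$ to equal that image), and finiteness of $\mathsf{Bis}(q,p)$ in the finite case.
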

\begin{proof}
Consider p-morphisms $f_1 \colon \mathcal{V} \to \mathcal{W}$ and $f_2 \colon \mathcal{V} \to \mathcal{F}$ such that $qf_1 = pf_2$. For each $Y \in \mathcal{V}$, we have $f_1(Y) \in \mathcal{W}$ and $f_2(Y) \in \mathcal{F}$, and the pair of surjective functions $(f_i)_Y \colon Y \to f_i(Y)$ induce a unique function $Y \to f_1(Y) \times f_2(Y)$; call $f(Y) \subseteq f_1(Y) \times f_2(Y)$ its image. Note that because $(f_1)_Y$ and $(f_2)_Y$ are surjective, we have that $f(Y)$ is a bisimulation from $f_1(Y)$ to $f_2(Y)$. Moreover, by $qf_1 = pf_2$, we have that $f_1(Y)$ and $f_2(Y)$ are $(q,p)$-compatible and that $f(Y)$ is $(q,p)$-admissible; in other words, $f(Y) \in \mathsf{Bis}(q,p)$. Let $f_Y \colon Y \rightarrow f(Y)$ be the restriction of the $Y \rightarrow f_1(Y) \times f_2(Y)$ above on the codomain. By Proposition \ref{prop: basic facts of the representation}.\ref{eq: definition of p-morphism}., the data above define a p-morphism $f \colon \mathcal{V} \rightarrow \mathsf{Bis}(q,p)$, whose uniqueness is easily verified.
\end{proof}

\begin{notation}
    When $\mathcal{G}$ is the terminal object, i.e.\ the family $\mathbf{1} := \{\{*\}\}$, then $\mathsf{Bis}(q,p)$ is just the disjoint union, varying $X \in \mathcal{W}$ and $U \in \mathcal{F}$, of the sets $\mathsf{B}(X,U)$ ($(q,p)$-compatibility and $(q,p)$-admissibility trivialize) it represents the product of $\mathcal{W}$ and $\mathcal{F}$. We then denote $\mathsf{Bis}(q,p)$ by   $\mathsf{Bis}(\mathcal{W},\mathcal{F})$. Moreover, if $\mathcal{W} = \mathcal{F}$, we simply write $\mathsf{Bis}(\mathcal{W})$.
\end{notation}

We will need a number of special operations on bisimulations.

\begin{definition}
    Let $\mathcal{W}$ be a family, $X,Y,Z\in \mathcal{W}$, $A\in \mathsf{B}(X,Y)$ and $B\in \mathsf{B}(Y,Z)$. We say that $P\in \mathsf{B}(A,B)$ is a \emph{bisimulation of composable pairs} if for each $((x,y),(y',z))\in P$ we have that $y=y'$. If $P$ is a bisimulation of composable pairs, we denote by $\mathsf{Comp}(P)=\{(x,z)\in X\times Z :\exists y\in Y, ((x,y),(y,z))\in P\}$.
\end{definition}

\begin{proposition}[Closure properties of bisimulations]\label{prop: closure properties of bisimulations}
Given $\mathcal{W}\in \mathsf{S5Fr}_{l.f}$ (or $\mathsf{S5Fr}_{f}$), and $X,Y,Z\in \mathcal{W}$, we have:
\begin{enumerate}
    \item For each $X\in \mathcal{W}$, the relation $\Delta_{X}=\{(x,x) \in X\times X : x\in X\}$ is in $\mathsf{B}(X,X)$.
    \item For each $A\in \mathsf{B}(X,Y)$, we have $A^{\dagger} := \{(y,x) \in Y \times X\ \vert\ (x,y) \in A\}$ and $A^{\dagger}\in \mathsf{B}(Y,X)$.
    \item If $A\in \mathsf{B}(X,Y)$ and $B\in \mathsf{B}(Y,Z)$, the relation $A B := \{(x,z) \in X \times Z\ \vert\ \exists y \in Y ((x,y) \in A \text{ and } (y,z) \in B)\}$ is in $\mathsf{B}(X,Z)$;
    \item \label{eq: composable} If $A\in \mathsf{B}(X,Y)$ and $B\in \mathsf{B}(Y,Z)$, and $P\in \mathsf{B}(A,B)$, where $P$ is a bisimulation of composable pairs, then $\mathsf{Comp}(P)\in \mathsf{B}(X,Z)$.
\end{enumerate}
\end{proposition}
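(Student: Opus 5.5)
The plan is to verify each item directly from the definition of $\mathsf{B}(X,Y)$: a relation lies in $\mathsf{B}(X,Y)$ exactly when its two coordinate projections are surjective, i.e.\ every $x\in X$ has some partner in $Y$ and every $y\in Y$ has some partner in $X$. In every case we also note that the relation is a subset of the appropriate product of non-empty finite sets. Items 1 and 2 are immediate. For $\Delta_X$, each $x$ is related to itself in both directions. For $A^{\dagger}$, the two surjectivity conditions for $A$ simply swap roles.

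For item 3, take $x\in X$. Since $A$ is a bisimulation, there is $y$ with $(x,y)\in A$. Since $B$ is a bisimulation, there is $z$ with $(y,z)\in B$. Hence $(x,z)\in AB$. The symmetric argument, starting from $z\in Z$ and going backwards through $B$ and then $A$, gives a partner in $X$ for every $z$. This is just the fact that a composite of surjections onto the middle set behaves well, and it uses nothing beyond the definition.

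Item 4 is the only step needing a little care. Fix $x\in X$. Because $A\in\mathsf{B}(X,Y)$, there is $y$ with $a:=(x,y)\in A$. Because $P\in\mathsf{B}(A,B)$, the element $a$ has some $P$-partner $b=(y',z)\in B$ with $(a,b)\in P$. The composable-pairs condition then forces $y'=y$, so $((x,y),(y,z))\in P$ and therefore $(x,z)\in\mathsf{Comp}(P)$. Symmetrically, given $z\in Z$, pick $(y,z)\in B$ using surjectivity of $B$ onto $Z$, and then a $P$-partner $((x,y'),(y,z))$ using surjectivity of $P$ onto $B$. Composability again gives $y'=y$, which yields $(x,z)\in\mathsf{Comp}(P)$. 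The only real obstacle is to remember that surjectivity of $P$ onto both $A$ and $B$ is what provides the witnesses, while composability is what ensures the middle coordinates agree.

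Finally, membership in $\mathcal{W}$ plays no role beyond the fact that $X,Y,Z$ are non-empty finite sets, so the result holds verbatim in both $\mathsf{S5Fr}_{l.f}$ and $\mathsf{S5Fr}_{f}$.
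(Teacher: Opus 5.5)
Your proposal is correct and takes the same approach as the paper. For item 4 the paper also picks $(x,y)\in A$, takes a $P$-partner $(y',z)\in B$, and uses composability to force $y'=y$; it dismisses items 1–3 and the reverse direction of item 4 as ``similarly easy'', and you simply write those parts out.
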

\begin{proof}
    We show \ref{eq: composable}., the other points having similarly easy proofs. If $x \in X$, then because $A$ is a bisimulation, there is some $y \in Y$ such that $(x,y)\in A$; since $P$ is a bisimulation, there is then some $(y',z) \in B$ such that $((x,y),(y',z))\in P$, which forces $y = y'$ (because $P$ is a bisimulation of composable pairs). We found $z \in Z$ such that $(x,z)\in \mathsf{Comp}(P)$. The other condition is similarly verified.
\end{proof}

\begin{remark}
    Note that the composition law above is stronger than simple composition of bisimulations. Given $X,Y,Z\in \mathcal{W}$, $A\in \mathsf{B}(X,Y)$ and $B\in \mathsf{B}(Y,Z)$ and $A,B\in \mathcal{E}$, then $AB=\{(x,z) : \exists y, (x,y)\in A\text{ and } (y,z)\in B\}$ is $\mathsf{Comp}(A*B)$ where $A*B=\{((x,y),(y,z)) : (x,y)\in A, (y,z)\in B\}$, which is a bisimulation of composable pairs, since $A,B$ are bisimulations. We note that the weaker condition, which we refer to as \emph{(W-Composition)}, that for $A,B\in \mathcal{E}$ we have $AB\in \mathcal{E}$ will often be sufficient in the proofs below. 
\end{remark}

We also note the following basic fact (see e.g. \cite{Tarski1941}):

\begin{proposition}
    For each $\mathcal{W} \in \mathsf{S5Fr}_{l.f}$, and $X,Y\in \mathcal{W}$ we have that $\mathsf{B}(X,Y)$ forms a monoid: given $A, A' \in \mathsf{B}(X,Y)$, if $A\subseteq A'$, then $AB\subseteq A'B$ and $CA\subseteq CA'$, whenever the compositions are defined. Moreover, we have $\Delta_X \subseteq A A^{\dagger}$ and $\Delta_Y \subseteq A^{\dagger} A$.
\end{proposition}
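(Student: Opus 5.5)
Each claim reduces to unfolding relational composition and using the two surjectivity conditions that define a bisimulation. The word ``monoid'' in the statement is slightly imprecise when $X\neq Y$, since composition goes $\mathsf{B}(X,Y)\times\mathsf{B}(Y,Z)\to\mathsf{B}(X,Z)$. I will therefore read it as: the bisimulations between the members of $\mathcal{W}$ form a category (a monoid on each $\mathsf{B}(X,X)$) under the composition of Proposition~\ref{prop: closure properties of bisimulations}. This category is ordered by inclusion and carries the involution $(-)^{\dagger}$.

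\textbf{Monoid/category structure.} Closure under composition is Proposition~\ref{prop: closure properties of bisimulations}(3). Associativity $(AB)C=A(BC)$ holds because both sides equal $\{(x,w) : \exists y,z\ (x,y)\in A,(y,z)\in B,(z,w)\in C\}$; this is associativity of relational composition and needs no hypotheses. The unit is $\Delta_X$, a bisimulation by Proposition~\ref{prop: closure properties of bisimulations}(1), and clearly $\Delta_X A = A = A\Delta_Y$ for $A\in\mathsf{B}(X,Y)$.

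\textbf{Monotonicity.} Suppose $A\subseteq A'$ in $\mathsf{B}(X,Y)$ and $B\in\mathsf{B}(Y,Z)$. If $(x,z)\in AB$, pick a witness $y$ with $(x,y)\in A\subseteq A'$ and $(y,z)\in B$; the same $y$ shows $(x,z)\in A'B$. The argument for $CA\subseteq CA'$ is symmetric.

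\textbf{The inequalities $\Delta_X\subseteq AA^{\dagger}$ and $\Delta_Y\subseteq A^{\dagger}A$.} These are the only places where the bisimulation hypothesis is used, though I do not expect a real obstacle. For $x\in X$, the first totality condition on $A$ gives some $y\in Y$ with $(x,y)\in A$, hence $(y,x)\in A^{\dagger}$, so $(x,x)\in AA^{\dagger}$. Dually, for $y\in Y$, the second condition gives $x$ with $(x,y)\in A$, so $(y,x)\in A^{\dagger}$ and $(y,y)\in A^{\dagger}A$. Finally, I would note that $(AB)^{\dagger}=B^{\dagger}A^{\dagger}$ and $A^{\dagger\dagger}=A$, so $\dagger$ is an order-preserving involution. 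Nothing here depends on local finiteness, so the same argument works in $\mathsf{S5Fr}_{f}$.
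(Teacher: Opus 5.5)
Your proof is correct and matches what the paper intends. The paper gives no argument beyond citing Tarski's calculus of relations, and your elementwise unfolding supplies those standard facts: associativity and monotonicity hold for arbitrary relations, and the two totality conditions of a bisimulation give $\Delta_X \subseteq A A^{\dagger}$ and $\Delta_Y \subseteq A^{\dagger} A$. Reading ``monoid'' as a category of bisimulations, with an actual monoid on each $\mathsf{B}(X,X)$, is the right way to fix the statement's imprecise wording, since $\mathsf{B}(X,Y)$ is not closed under composition when $X \neq Y$.
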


Recall from Proposition \ref{prop: basic facts of the representation}.\ref{eq: structure of subobjects}. that the lattice $\mathsf{Sub}(\mathcal{G})$ of subobjects of $\mathcal{G}$ is (isomorphic to) the lattice of subsets $\mathcal{F} \subseteq \mathcal{G}$. Equivalence relations over $\mathcal{W}$ (see Definition \ref{def:eqrel}) are thus special objects of $\mathsf{Sub}(\mathsf{Bis}(\mathcal{W}))$.

\begin{definition}\label{def:equivalential}
    We say that a subobject $\mathcal{E}\in \mathsf{Sub}(\mathsf{Bis}(\mathcal{W}))$ is \emph{equivalential} if:
    \begin{enumerate}
    \item (Identity) For each $X\in \mathcal{W}$, $\Delta_{X}\in \mathcal{E}$;
        \item (Dagger) Whenever $X,Y\in \mathcal{W}$ and $A\in \mathsf{B}(X,Y)$ and $A\in \mathcal{E}$, then $A^{\dagger}\in \mathcal{E}$;
        \item (Composition) Whenever $X,Y,Z\in \mathcal{W}$, $A\in \mathsf{B}(X,Y)$ and $B\in \mathsf{B}(Y,Z)$, and $A,B\in \mathcal{E}$, and $P\in \mathsf{B}(A,B)$ is a bisimulation of composable pairs, then $\mathsf{Comp}(P)\in \mathcal{E}$.
    \end{enumerate}
\end{definition}

\begin{theorem}\label{thm: characterization of equivalence relations}
    Let $\mathcal{W}\in \mathsf{S5Fr}_{l.f}$. The following are equivalent for $\mathcal{E}\in \mathsf{Sub}(\mathsf{Bis}(\mathcal{W}))$:
    \begin{enumerate}
        \item $\mathcal{E}$ is an equivalential subobject of $\mathsf{Bis}(\mathcal{W})$;
        \item $\mathcal{E}$ is an equivalence relation on $\mathcal{W}$.
    \end{enumerate}
\end{theorem}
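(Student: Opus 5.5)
We prove the theorem by matching, one at a time, the three clauses of Definition \ref{def:equivalential} with the three internal maps of Definition \ref{def:eqrel}, in the reformulated form of Remark \ref{rem:eqrel}. In that form, $\mathcal{E}$ is an equivalence relation exactly when three things hold: the diagonal $X\to X\otimes X$ factors through $\mathcal{E}$; the twist $s$ restricted to $\mathcal{E}$ factors through $\mathcal{E}$; and the transitivity map $t$ restricted to $\mathcal{E}\otimes_{\mathcal{W}}\mathcal{E}$ factors through $\mathcal{E}$. Proposition \ref{prop: basic facts of the representation}.\ref{eq: structure of subobjects} says subobjects are just subfamilies $\mathcal{E}\subseteq\mathsf{Bis}(\mathcal{W})$. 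So a p-morphism $g\colon\mathcal{V}\to\mathsf{Bis}(\mathcal{W})$ factors through $\mathcal{E}$ if and only if $g(Y)\in\mathcal{E}$ for every $Y\in\mathcal{V}$. Each condition therefore reduces to computing the images of the relevant canonical maps, using the explicit pullback description of Proposition \ref{prop:pullback given by category representation}.

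\textbf{Reflexivity.} The diagonal $\mathcal{W}\to\mathsf{Bis}(\mathcal{W})$ is induced by $(1,1)$. By the proof of Proposition \ref{prop:pullback given by category representation}, it sends $X$ to the image of $X\to X\times X$, $x\mapsto(x,x)$, which is $\Delta_X$. Hence reflexivity is equivalent to (Identity).

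\textbf{Symmetry.} The swap map sends $A\in\mathsf{B}(X,Y)$ to the image of $A\to Y\times X$, $(x,y)\mapsto(y,x)$, which is $A^\dagger$. Hence symmetry is equivalent to (Dagger).

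\textbf{Transitivity.} This is the main step. First describe $\mathcal{E}\otimes_{\mathcal{W}}\mathcal{E}$ as $\mathsf{Bis}(\pi_2|_{\mathcal{E}},\pi_1|_{\mathcal{E}})$. Its members are bisimulations $P\in\mathsf{B}(A,B)$ with $A,B\in\mathcal{E}$, where compatibility means $\pi_2(A)=\pi_1(B)=Y$, say $A\in\mathsf{B}(X,Y)$ and $B\in\mathsf{B}(Y,Z)$. Admissibility means that $((x,y),(y',z))\in P$ implies $y=y'$; this is exactly the statement that $P$ is a bisimulation of composable pairs. The map $t=(\pi_1\kappa_1,\pi_2\kappa_2)$ sends such a $P$ to the image of $P\to X\times Z$, $((x,y),(y,z))\mapsto(x,z)$, which is precisely $\mathsf{Comp}(P)$.

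Care is needed in two places. The first is the orientation of $\kappa_1,\kappa_2$ in Definition \ref{def:eqrel}: the pullback pairs $\pi_2$ of one factor with $\pi_1$ of the other, so I should check that the composite is $\mathsf{Comp}(P)$ rather than its dagger. This is harmless in any case once symmetry is available. The second is that all composable $P$ really arise as elements of the pullback family, and no others do. Given these, transitivity holds if and only if every such $\mathsf{Comp}(P)$ lies in $\mathcal{E}$, which is (Composition).

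Combining the three equivalences gives $1\Leftrightarrow 2$. For the direction $1\Rightarrow2$, the maps $r,s,t$ are the corestrictions of the canonical maps, and the required equations with $\pi_1,\pi_2$ hold automatically because they already hold in $\mathsf{Bis}(\mathcal{W})$. That $\mathsf{Bis}(\mathcal{W})$ is itself an effective equivalence relation is noted in Remark \ref{rem:eqrel}.
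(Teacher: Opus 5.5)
Your proposal is correct and follows essentially the same route as the paper: it uses Remark \ref{rem:eqrel} to reduce each internal property to a factorization through the subfamily $\mathcal{E}$, and for transitivity it uses Proposition \ref{prop:pullback given by category representation} to identify the pullback with bisimulations of composable pairs and the induced map with $\mathsf{Comp}$. The two points you flag as needing care (the orientation, and that exactly the composable $P$ occur) are already settled by your own unravelling of compatibility and admissibility. Computing $\mathcal{E}\otimes_{\mathcal{W}}\mathcal{E}$ directly, rather than restricting $\mathsf{Bis}(\pi_2,\pi_1)$ as the paper does, makes no difference.
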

\begin{proof}
    Assume that $\mathcal{E}$ is a subobject of $\mathsf{Bis}(\mathcal{W})$. We show that:
    \begin{enumerate}
        \item[a)] $\mathcal{E}$ satisfies (Identity) if and only if it satisfies internal reflexivity;
        \item[b)] $\mathcal{E}$ satisfies (Dagger) if and only if it satisfies internal symmetry;
        \item[c)] $\mathcal{E}$ satisfies (Composition) if and only if it satisfies internal transitivity.
    \end{enumerate}

    We prove c), since a) and b) can be verified in a similar, albeit easier, manner. Recall that the categorical product $\mathcal{W} \otimes \mathcal{W}$ (represented by $\mathsf{Bis}(\mathcal{W})$, with projections $\pi_i \colon \mathsf{Bis}(\mathcal{W}) \to \mathcal{W}$) is an equivalence relation over $\mathcal{W}$ (it is the maximal one, meaning that all the others are subobjects of $\mathcal{W} \otimes \mathcal{W}$). Consider what we denoted by $(\mathcal{W} \otimes \mathcal{W}) \otimes_{\mathcal{W}} (\mathcal{W} \otimes \mathcal{W})$, namely the pullback of $\pi_2 \colon \mathsf{Bis}(\mathcal{W}) \to \mathcal{W}$ along $\pi_1 \colon \mathsf{Bis}(\mathcal{W}) \to \mathcal{W}$; it is represented by $\mathsf{Bis}(\pi_2,\pi_1)$, with projections $\kappa_i \colon \mathsf{Bis}(\pi_2,\pi_1) \to \mathsf{Bis}(\mathcal{W})$. Unraveling the definitions of $(\pi_2,\pi_1)$-compatibility and $(\pi_2,\pi_1)$-admissibility, we obtain that $\mathsf{Bis}(\pi_2,\pi_1)$ is the disjoint union, varying $X,Y,Z \in \mathcal{W}$, $A \in \mathsf{B}(X,Y)$ and $B \in \mathsf{B}(Y,Z)$, of the sets of bisimulations $P \in \mathsf{B}(A,B)$ of composable pairs; then, the projection $\kappa_1$ sends $P$ to $A$ and $(\kappa_1)_P \colon P \to A$ sends $((x,y),(y,z))$ to $(x,y)$ ($\kappa_2$ acts similarly). By Proposition \ref{prop: basic facts of the representation}.\ref{eq: definition of p-morphism}., the family of surjective functions $P \to \mathsf{Comp}(P)$, sending $((x,y),(y,z))$ to $(x,z)$, defines a p-morphism $\mathsf{Comp} \colon \mathsf{Bis}(\pi_2,\pi_1) \to \mathsf{Bis}(\mathcal{W})$. It is straightforward to show that $\pi_1 \mathsf{Comp} = \pi_1 \kappa_1$ and $\pi_2 \mathsf{Comp} = \pi_2 \kappa_2$, proving that $\mathsf{Comp}$ is the morphism witnessing internal transitivity for the maximal relation $\mathcal{W} \otimes \mathcal{W}$ (as required by Definition \ref{def:eqrel} of categorical equivalence relation). For any other subobject $\mathcal{E} \in \mathsf{Sub}(\mathcal{W} \otimes \mathcal{W})$, the pullback $\mathcal{E} \otimes_{\mathcal{W}} \mathcal{E}$ is a subobject of $\mathsf{Bis}(\pi_2,\pi_1)$; it is given by those bisimulations $P \in \mathsf{B}(A,B)$ of composable pairs for which $A, B \in \mathcal{E}$. Then, $\mathcal{E}$ satisfies internal transitivity if and only if the restriction of $\mathsf{Comp}$ to $\mathcal{E} \otimes_{\mathcal{W}} \mathcal{E}$ factorizes through $\mathcal{E}$ (see Remark \ref{rem:eqrel}); but this is exactly (Composition) of Definition \ref{def:equivalential} of equivalential subobject.
\end{proof}

Let $X,Y$ be non-empty finite sets. Note that in addition to the closure properties from Proposition \ref{prop: closure properties of bisimulations}, $\mathsf{B}(X,Y)$ is closed under \textit{non-empty}\footnote{By our definition, the empty relation is not a bisimulation; and the empty union of bisimulations would need to be the empty relation.} unions; moreover the unions are compatible with the composition relation, meaning that $A(B \cup B') = AB \cup AB'$ and $(C \cup C')A = CA \cup C'A$, whenever the compositions make sense. This observation allows us to characterize effectiveness of equivalence relations (see Definition \ref{def:effeqrel}):

\begin{proposition}\label{prop:eff}
Let $\mathcal{E}$ be an equivalential subobject of $\mathsf{Bis}(\mathcal{W})$. Then $\mathcal{E}$, as equivalence relation, is effective if, and only if, it is
\begin{enumerate}
    \item (downward closed) If $X, Y \in \mathcal{W}$, $A, B \in \mathsf{B}(X,Y)$, $A \in \mathcal{E}$ and $B \subseteq A$, then $B \in \mathcal{E}$;
    \item (closed under non-empty unions) If $X, Y \in \mathcal{W}$, $A, B \in \mathsf{B}(X,Y)$ and $A, B \in \mathcal{E}$, then $A \cup B \in \mathcal{E}$.
\end{enumerate}
\end{proposition}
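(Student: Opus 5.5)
We prove both directions by computing kernel pairs explicitly in the family representation. First we describe the kernel pair of an arbitrary p-morphism $f\colon \mathcal{W}\to\mathcal{Q}$. By Proposition \ref{prop:pullback given by category representation}, the pullback $\mathcal{W}\otimes_{\mathcal{Q}}\mathcal{W}$ is $\mathsf{Bis}(f,f)$. Its elements are the bisimulations $A\in\mathsf{B}(X,Y)$ with $f(X)=f(Y)$ and $f_X(x)=f_Y(y)$ for all $(x,y)\in A$. Its map into $\mathsf{Bis}(\mathcal{W})$ is the inclusion, since its components on each $A$ are identities. So the kernel pair of $f$ is the subset
\[\{A\in\mathsf{B}(X,Y) : f(X)=f(Y),\ A\subseteq \ker(f_X,f_Y)\},\]
where $\ker(f_X,f_Y)=\{(x,y)\in X\times Y : f_X(x)=f_Y(y)\}$.

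\emph{Effective implies the two closure conditions.} Fix $X,Y$ with $f(X)=f(Y)$. The relevant bisimulations are then exactly the bisimulations contained in the fixed relation $\ker(f_X,f_Y)$. This set is clearly downward closed (among bisimulations) and closed under non-empty unions. Both properties are invariant under isomorphism of subobjects, since subobjects are just subsets by Proposition \ref{prop: basic facts of the representation}. So any effective $\mathcal{E}$ satisfies 1 and 2.

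\emph{The closure conditions imply effective.} Assume $\mathcal{E}$ is equivalential and satisfies 1 and 2. We build the quotient $\mathcal{Q}$ and the map $f$ as follows.

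1. Define $X\sim Y$ iff $\mathcal{E}\cap\mathsf{B}(X,Y)\neq\emptyset$. This is an equivalence relation on $\mathcal{W}$ by (Identity), (Dagger) and (W-Composition).
2. For $X\sim Y$, let $M_{XY}$ be the union of all members of $\mathcal{E}\cap\mathsf{B}(X,Y)$. Since $\mathsf{B}(X,Y)$ is finite, closure under non-empty unions gives $M_{XY}\in\mathcal{E}$, so it is the maximum. Downward closure then gives $\mathcal{E}\cap\mathsf{B}(X,Y)=\{A\in\mathsf{B}(X,Y): A\subseteq M_{XY}\}$.
3. Show $M_{XX}$ is an equivalence relation on the set $X$. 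It contains $\Delta_X$. By (Dagger) and maximality, $M_{XX}^\dagger\subseteq M_{XX}$. By (W-Composition), $M_{XX}M_{XX}\in\mathcal{E}$, hence $M_{XX}M_{XX}\subseteq M_{XX}$.
4. The same argument gives $M_{XY}^\dagger=M_{YX}$ and $M_{XY}M_{YZ}\subseteq M_{XZ}$.
5. Choose a representative $X_0$ of each $\sim$-class and set $f(X)=X_0/M_{X_0X_0}$, a non-empty finite set. Define $f_X\colon X\to X_0/M_{X_0X_0}$ by $x\mapsto[x_0]$ for any $x_0$ with $(x_0,x)\in M_{X_0X}$; such $x_0$ exists because $M_{X_0X}$ is a bisimulation.
6. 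Check $f_X$ is well defined: if $(x_0,x),(x_0',x)\in M_{X_0X}$, then $(x_0,x_0')\in M_{X_0X}M_{XX_0}\subseteq M_{X_0X_0}$. Surjectivity also follows from the bisimulation property. By Proposition \ref{prop: basic facts of the representation}, this data gives a p-morphism $f\colon\mathcal{W}\to\mathcal{Q}$.

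The main obstacle is to show $\ker(f_X,f_Y)=M_{XY}$ for $X\sim Y$. The inclusion $\subseteq$ is a composition argument. Take $(x,y)$ with $f_X(x)=f_Y(y)$, witnessed by $(x_0,x)\in M_{X_0X}$, $(y_0,y)\in M_{X_0Y}$ and $(x_0,y_0)\in M_{X_0X_0}$. Then $(x,y)\in M_{XX_0}M_{X_0X_0}M_{X_0Y}\subseteq M_{XY}$. The inclusion $\supseteq$ is the reverse chase: given $(x,y)\in M_{XY}$ and $(x_0,x)\in M_{X_0X}$, we get $(x_0,y)\in M_{X_0X}M_{XY}\subseteq M_{X_0Y}$, so $f_Y(y)=[x_0]=f_X(x)$.

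Together with the description of the kernel pair and step 2, this gives $\mathcal{E}=\mathsf{Bis}(f,f)$. For $X\not\sim Y$ both sides are empty over $(X,Y)$, because $f(X)\neq f(Y)$. Hence $\mathcal{E}$ is the kernel pair of $f$, i.e.\ effective.
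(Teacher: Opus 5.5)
Your proof is correct and follows essentially the paper's route. The forward direction reads off downward closure and union-closure from the pointwise description of the kernel pair $\mathsf{Bis}(f,f)$. The converse builds the quotient from $\mathcal{E}$-connectedness of family members, then uses the maximal element $M_{XY}$ of $\mathsf{B}(X,Y)\cap\mathcal{E}$ together with downward closure. The only cosmetic difference is that you realize each fibre as $X_0/M_{X_0X_0}$ for a chosen representative $X_0$, whereas the paper takes the choice-free quotient of the disjoint union $\{(x,X) \mid X\in K\}$ under ``related by some $A\in\mathcal{E}$''; the two quotients are isomorphic.
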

\begin{proof}
Consider a p-morphism $f : \mathcal{W} \to \mathcal{F}$. The pullback $\mathsf{Bis}(f,f)$ of $f$ along itself is a subobject of $\mathsf{Bis}(\mathcal{W})$; by definition, whenever $X, Y \in \mathcal{W}$ and $A \in \mathsf{B}(X,Y)$, we have
$$A \in \mathsf{Bis}(f,f) \iff f(X) = f(Y) \text{ and } (\forall (x,y) \in A) (f_X(x) = f_Y(y))$$
It is then straightforward to prove that $\mathsf{Bis}(f,f)$ is downward closed and closed under non-empty unions, given that belonging to this pullback corresponds to a condition on pairs $(x,y)$.

Vice versa, let $\mathcal{E}$ be an equivalential subobject of $\mathsf{Bis}(\mathcal{W})$ and assume that it is downward closed and closed under non-empty unions (these assumptions will be used only in the very last part of the proof). Given $X, Y \in \mathcal{W}$, consider the family $\mathsf{B}(X,Y) \cap \mathcal{E}$ of the bisimulations from $X$ to $Y$ that belong to $\mathcal{E}$. Call $X$ and $Y$ $\mathcal{E}$-\textit{connected} if $\mathsf{B}(X,Y) \cap \mathcal{E}$ is non-empty. Being $\mathcal{E}$-connected is an equivalence relation over the set $\mathcal{W}$ (by (Identity), (Dagger) and (W-Composition)); call $\mathcal{W}/\mathcal{E}$ the set of its equivalence classes. If $K \in \mathcal{W}/\mathcal{E}$ is one of such classes, then we can consider the disjoint union $\{(x,X)\ \vert\ X \in K, x \in X\}$ and say that $(x,X)$ is equivalent to $(y,Y)$ if there is $A \in \mathsf{B}(X,Y) \cap \mathcal{E}$ such that $(x,y) \in A$ (it is indeed an equivalence relation, by (Identity), (Dagger) and (W-Composition)); call $K_\mathcal{E}$ the resulting quotient.

Consider the family $\mathcal{W}_\mathcal{E} := \{K_\mathcal{E}\ \vert\ K \in \mathcal{W}/\mathcal{E}\}$ and $f : \mathcal{W} \to \mathcal{W}_\mathcal{E}$ sending $X \in \mathcal{W}$ to $K_\mathcal{E}$, if $K \in \mathcal{W}/\mathcal{E}$ and $X \in K$, with $f_X : X \to f(X) = K_\mathcal{E}$ sending $x \in X$ to the equivalence class of $(x,X)$. Each $f_X$, for $X \in \mathcal{W}$, is surjective: consider an element in $K_\mathcal{E}$, say the class of $(y,Y)$, with $Y \in K$ and $y \in Y$; since $X \in K$, then $X$ and $Y$ are $\mathcal{E}$-connected, hence we can find $A \in \mathsf{B}(X,Y) \cap \mathcal{E}$ and, $A$ being a bisimulation, $x \in X$ such that $(x,y) \in A$. Then $f_X(x)$ is the class of $(x,X)$, which is equal to the class of $(y,Y)$. The fact that all $f_X$ are surjective proves at the same time that $\mathcal{W}_\mathcal{E}$ belongs to $\mathsf{S5Fr}_{l.f}$ (it is a family of non-empty finite sets) and that $f$ defines a p-morphism.\footnote{It can be proven that $f : \mathcal{W} \to \mathcal{W}_\mathcal{E}$ is the coequalizer of $\mathcal{E} \rightrightarrows \mathcal{W}$.} We can now compare $\mathcal{E}$ with the kernel pair $\mathsf{Bis}(f,f)$. Consider $X, Y \in \mathcal{W}$ and $A \in \mathsf{B}(X,Y)$. If $A \in \mathcal{E}$, then $A \in \mathsf{B}(X,Y) \cap \mathcal{E}$, hence $f(X) = f(Y)$ ($X$ and $Y$ are $\mathcal{E}$-connected), and, given $(x,y) \in A$, we have $f_X(x) = f_Y(y)$. Vice versa, assume that $f(X) = f(Y)$ and that $f_X(x) = f_Y(y)$ for each $(x,y) \in A$. The former condition says that $X$ and $Y$ are $\mathcal{E}$-connected, i.e.\ that $\mathsf{B}(X,Y) \cap \mathcal{E}$ is non-empty; we can then consider its greatest element $M \in \mathsf{B}(X,Y) \cap \mathcal{E}$, by closure under non-empty unions (take the union of all the bisimulations in $\mathsf{B}(X,Y) \cap \mathcal{E}$). The latter says that, for each $(x,y) \in A$, there exists $B \in \mathsf{B}(X,Y) \cap \mathcal{E}$ such that $(x,y) \in B$; in other words, $A \subseteq M$. We conclude that $A \in \mathcal{E}$, by downward closure.
\end{proof}

\begin{theorem}\label{thm: failure of CoSP for S5}
The system $\mathsf{S5}$ lacks the \emph{CoSP}\footnote{It should be noted that Theorem \ref{thm: failure of CoSP for S5} likewise follows  from the results of \cite{barrcoexactness}: for locally tabular logics $L$, if $L^{\infty}$ fails to have the CoSP, then $L$ will likewise fail to have it, where $L^{\infty}$ is a natural infinitary analogue of $L$ (see \cite{deberardinisprooftheoryforprofinite}), whenever the counterexample is given by finite frames. As a consequence of the main result of that paper, $\mathsf{S5}^{\infty}$ fails to have that property, and the counterexample is finite, from which the result for \textsf{S5} follows.}.
\end{theorem}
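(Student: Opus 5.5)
By Proposition \ref{prop:CoSP iff eff}, it suffices to exhibit a single non-effective equivalence relation in $\mathsf{Alg}(\mathsf{S5})_{fp}^{op}$. Since $\mathsf{S5}$ is locally tabular, this category is equivalent to $\mathsf{S5Fr}_f$. So I will build a finite family $\mathcal{W}$ and an equivalential subobject $\mathcal{E}\subseteq\mathsf{Bis}(\mathcal{W})$, and show that it violates one of the two conditions of Proposition \ref{prop:eff}. Theorem \ref{thm: characterization of equivalence relations} then tells us that $\mathcal{E}$ is an equivalence relation, and Proposition \ref{prop:eff} that it is not effective.

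The construction follows Example \ref{ex: s5 non separating}. Take $\mathcal{W}=\{X\}$ with $X=\{0,1\}$, which is the dual of the free $\mathsf{S5}$-algebra quotient in which $p$ takes both values. The bisimulations in $\mathsf{B}(X,X)$ are the relations on $X$ whose two projections are both surjective. Among them are $\Delta_X$, the swap $\sigma=\{(0,1),(1,0)\}$, and the larger relations $\Delta_X\cup\{(0,1)\}$, and so on. I will define
\[
\mathcal{E}=\{A\in\mathsf{B}(X,X) : A\subseteq\Delta_X \text{ or } A\subseteq\sigma\}=\{\Delta_X,\sigma\}.
\]
These correspond to the ``all matching'' and ``all non-matching'' clusters of the example.

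Next I will check that $\mathcal{E}$ is equivalential in the sense of Definition \ref{def:equivalential}.
\begin{itemize}
\item (Identity) holds because $\Delta_X\in\mathcal{E}$.
\item (Dagger) holds because both $\Delta_X$ and $\sigma$ are self-converse.
\item (Composition) is the step that needs care: given $A,B\in\mathcal{E}$ and a bisimulation $P$ of composable pairs, I must show $\mathsf{Comp}(P)\in\mathcal{E}$. Since $A$ and $B$ are graphs of bijections $a$ and $b$, every composable pair has the form $((x,a(x)),(a(x),b(a(x))))$. Hence $\mathsf{Comp}(P)\subseteq$ the graph of $b\circ a$, which is again $\Delta_X$ or $\sigma$. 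Because $\mathsf{Comp}(P)$ is itself a bisimulation, it has surjective projections, so it equals the whole graph of $b\circ a$. Thus $\mathsf{Comp}(P)\in\mathcal{E}$.
\end{itemize}

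Finally, $\mathcal{E}$ fails closure under non-empty unions. Indeed, $\Delta_X\cup\sigma=X\times X$ is a bisimulation, but it does not belong to $\mathcal{E}$. By Proposition \ref{prop:eff}, $\mathcal{E}$ is not effective, and so $\mathsf{S5}$ lacks the CoSP.

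The main obstacle is the Composition check, specifically confirming that composable-pair bisimulations over graphs of bijections cannot produce anything outside $\mathcal{E}$; the argument via projections above handles it. A second point to keep straight is the identification of $\mathsf{S5Fr}_f$ with $\mathsf{Alg}(\mathsf{S5})_{fp}^{op}$, which is needed to transfer non-effectiveness. I would cite local tabularity and the finite duality for this. Alternatively, one can translate $\mathcal{E}$ back into the formula $\rho$ of Example \ref{ex: s5 non separating} restricted to $\tau=\Diamond p\wedge\Diamond\neg p$.
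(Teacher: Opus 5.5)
Your proposal is correct and is essentially the paper's argument. The paper likewise takes the ``all matching / all non-matching'' relation of Example \ref{ex: s5 non separating} and uses Proposition \ref{prop:eff} to note that it fails closure under non-empty unions. You restrict to the single cluster $\{0,1\}$ (i.e.\ $\tau=\Diamond p\wedge\Diamond\neg p$) and check equivalentiality directly on the frame side, which is a faithful and slightly more self-contained version of the same proof.
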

\begin{proof}
In Example \ref{ex: s5 non separating} we provided a coequivalence relation which is not separating. In light of the characterization of effectiveness, we can this easily: the equivalence relation associated to the formula fails to be closed under non-empty unions.
\end{proof}

\subsection{Local coequivalences in S5}

We now turn our attention to equivalence relations arising from descent data. For that purpose, we begin by  rephrasing \ref{thm:deseq} in our setting. Let $f : \mathcal{W} \to \mathcal{F}$ be a morphism in $\Fr_{l.f}$. Then, given $X, Y \in \mathcal{W}$ and $A \in \mathsf{B}(X,Y)$, we have a bisimulation $A_f \in \mathsf{B}(X,f(Y))$ defined as
$$A_f := \{(x,f_Y(y))\ \vert\ (x,y) \in A\}.$$
We have an obvious surjective function $A \to A_f$. These data define a p-morphism $\mathsf{Bis}(\mathcal{W}) \to \mathsf{Bis}(\mathcal{W},\mathcal{F})$. Observe that we can \emph{push $(-)_f$ to the right}: if $A$ and $B$ are composable, then $(AB)_f = A B_f$.
\begin{proposition}\label{prop:deseqframes}
Let $\mathcal{E}$ be an equivalential subobject of $\mathsf{Bis}(\mathcal{W})$. Then $\mathcal{E}$ comes from a descent data over $f : \mathcal{W} \to \mathcal{F}$ for $p : \mathcal{F} \to \mathcal{G}$ if and only if:
\begin{enumerate}
    \item \label{eq: first condition for descent} for each $A \in \mathcal{E}$, the function $A \to A_f$ is bijective and $A_f \in \mathsf{Bis}(pf,p)$;
    \item \label{eq: second condition for descent} for each $D \in \mathsf{Bis}(pf,p)$, there exists a unique $A \in \mathcal{E}$ such that $A_f = D$.
\end{enumerate}
\end{proposition}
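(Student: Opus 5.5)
The plan is to instantiate Theorem \ref{thm:deseq} in $\Fr_{l.f}$ using the bisimulation description of pullbacks from Proposition \ref{prop:pullback given by category representation}. By Theorem \ref{thm:deseq}, $\mathcal{E}$ (with projections $\pi_1,\pi_2\colon\mathcal{E}\to\mathcal{W}$) comes from a descent datum over $f$ for $p$ if and only if the square (\ref{eq:deseq}), with top arrow $\pi_1\colon\mathcal{E}\to\mathcal{W}$, left arrow $f\pi_2\colon\mathcal{E}\to\mathcal{F}$, right arrow $pf$ and bottom arrow $p$, is a pullback. This square always commutes, since elements of $\mathcal{E}\subseteq \mathsf{Bis}(\mathcal{W})$ are bisimulations. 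So the task reduces to showing that the canonical comparison morphism $c\colon \mathcal{E}\to \mathsf{Bis}(pf,p)$ is an isomorphism exactly when conditions \ref{eq: first condition for descent} and \ref{eq: second condition for descent} hold.

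First I will compute $c$ explicitly, following the proof of Proposition \ref{prop:pullback given by category representation}. Take $A\in\mathcal{E}$ with $A\in\mathsf{B}(X,Y)$. The two legs give the surjections $A\to X$ and $A\to Y\to f(Y)$, namely $(x,y)\mapsto x$ and $(x,y)\mapsto f_Y(y)$. The image of the induced map $A\to X\times f(Y)$ is precisely $A_f$. Hence $c$ sends $A\mapsto A_f$, and its component $c_A\colon A\to A_f$ is the obvious surjection $(x,y)\mapsto (x,f_Y(y))$. In particular $A_f\in\mathsf{Bis}(pf,p)$ holds automatically: compatibility follows from $pf(X)=pf(Y)=p(f(Y))$, and admissibility follows from $p_{f(X)}f_X(x)=p_{f(Y)}f_Y(y)$ for $(x,y)\in A$. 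I will note this but keep it as part of condition \ref{eq: first condition for descent}, as stated.

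Next I need a lemma: in $\Fr_{l.f}$, a p-morphism $c$ is an isomorphism if and only if the underlying function on families is bijective and every component $c_A$ is bijective. This follows directly from Proposition \ref{prop: basic facts of the representation}.\ref{eq: definition of p-morphism}, since the inverse is built from the inverse function on families together with the inverse components, and composition is componentwise. Applying the lemma to $c$:
\begin{itemize}
\item bijectivity of the components $A\to A_f$ is the first half of condition \ref{eq: first condition for descent};
\item surjectivity of $A\mapsto A_f$ onto $\mathsf{Bis}(pf,p)$ is the existence part of condition \ref{eq: second condition for descent};
\item injectivity is the uniqueness part.
\end{itemize}
This gives both directions.

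The main obstacle is bookkeeping with the disjoint unions. Elements of $\mathsf{Bis}(\mathcal{W})$ and of $\mathsf{Bis}(pf,p)$ are tagged by their source and target sets. I must check that "unique $A$ with $A_f=D$" is read with $D$ carrying its tag $(X,f(Y))$. Note that $A$ may range over different $Y$ with the same $f(Y)$, so uniqueness across different $Y$ is a genuine condition and is exactly the injectivity of $c$ on families. I must also confirm that the up-to-isomorphism clause in Theorem \ref{thm:deseq} is harmless here. It is: a pullback square is determined up to the comparison iso, and $\pi_2$ is then automatically the descent datum.
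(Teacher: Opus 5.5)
Your route is the paper's: instantiate Theorem~\ref{thm:deseq}, identify the comparison map into $\mathsf{Bis}(pf,p)$ as $A\mapsto A_f$ with components $A\to A_f$, and read isomorphism as bijectivity on families and on components. There is, however, a genuine error. You claim that the square (P) always commutes, and hence that $A_f\in\mathsf{Bis}(pf,p)$ holds automatically. This is false. $\mathsf{Bis}(\mathcal{W})$ is the product over the terminal family, so a member $A\in\mathsf{B}(X,Y)$ of $\mathcal{E}$ may relate arbitrary $X,Y\in\mathcal{W}$. Nothing forces $pf(X)=pf(Y)$, or $p_{f(X)}f_X(x)=p_{f(Y)}f_Y(y)$ for $(x,y)\in A$. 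Your ``automatic'' verification simply presupposes these two equalities, and they \emph{are} the commutativity of (P).

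Here is a concrete counterexample. Let $X=\{x\}$, $Y=\{y\}$, $\mathcal{W}=\{X,Y\}$, $\mathcal{F}=\mathcal{G}=\{\{u\},\{v\}\}$, $p$ the identity, $f(X)=\{u\}$, $f(Y)=\{v\}$, and $\mathcal{E}=\mathsf{Bis}(\mathcal{W})$, which is equivalential. Every $A\to A_f$ is a bijection of singletons. Condition 2 holds: the only members of $\mathsf{Bis}(pf,p)$ are $\{(x,u)\}$ and $\{(y,v)\}$, hit exactly by $\Delta_X$ and $\Delta_Y$. Yet $\{(x,y)\}_f=\{(x,v)\}\notin\mathsf{Bis}(pf,p)$, and (P) does not even commute, so $\mathcal{E}$ is not a descent relation. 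Thus the second half of condition 1 is not redundant. Your ``if'' direction never uses it, and as written it would prove a false statement; your comparison map $c$ is not well defined without it.

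The repair is exactly the paper's one-line observation: ``$A_f\in\mathsf{Bis}(pf,p)$ for all $A\in\mathcal{E}$'' is equivalent to $(pf)\pi_1=p(f\pi_2)$. On families it reads $pf(X)=p(f(Y))$; on components it reads $p_{f(X)}f_X(x)=p_{f(Y)}f_Y(y)$ for $(x,y)\in A$. In the ``only if'' direction you get this because a pullback square commutes. In the ``if'' direction you must take it from the hypothesis, and only then does the universal property of $\mathsf{Bis}(pf,p)$ produce $c\colon A\mapsto A_f$.

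With that change the rest of your argument is correct and is a more detailed version of the paper's proof. That includes the isomorphism criterion in $\mathsf{S5Fr}_{l.f}$ via Proposition~\ref{prop: basic facts of the representation}, and the matching of component bijectivity, surjectivity and injectivity of $c$ with the remaining clauses. Your remark that uniqueness must range over all $Y$ with the same $f(Y)$ is also right.
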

\begin{proof}
Recall that $\mathsf{Bis}(pf,p)$ is the pullback of $pf$ along $p$ and it is a subobject of $\mathsf{Bis}(\mathcal{W},\mathcal{F})$. The fact that, for all $A \in \mathcal{E}$, we have $A_f \in \mathsf{Bis}(pf,p)$ means that $(pf)\pi_1 = p(f\pi_2)$; then $A \to A_f$ is the map $\mathcal{E} \to \mathsf{Bis}(pf,p)$ induced by the universal property. The other conditions establish bijectivity of such a map. The fact that this is equivalent with $\mathcal{E}$ arising from descent data then follows from Theorem \ref{thm:deseq}.
\end{proof}

\begin{definition}[Lift of bisimulation]
    Given $\mathcal{E}$  an equivalential subobject of $\mathsf{Bis}(\mathcal{W})$ arising as descent data, and $D \in\mathsf{Bis}(pf,p)$, i.e.\ $D \in \mathsf{B}(X,U)$ is $(pf,p)$-admissible, for some $X \in \mathcal{W}$ and $U \in \mathcal{F}$ a $(pf,p)$-compatible pair. We denote by $\mathsf{Lift}(D)$ the \emph{lift of $D$} the bisimulation uniquely determined by $\mathcal{E}$ relative to $D$, as in Proposition \ref{prop:deseqframes}.\ref{eq: second condition for descent}.: this means that $\mathsf{Lift}(D) \in \mathcal{E}$ and $(\mathsf{Lift}(D))_f = D$.
\end{definition}

Given $X \in \mathcal{W}$ and $U \in \mathcal{F}$ $(pf,p)$-compatible, i.e.\ such that $pf(X) = p(U)$, there exists the greatest $(pf,p)$-admissible bisimulation in $\mathsf{B}(X,U)$, namely
$$D_{X,U} := \{(x,u) \in X \times U\ \vert\ p_{f(X)} f_X(x) = p_U(u)\}.$$
Observe that $\mathsf{Lift}(D_{X,U}) \in \mathsf{B}(X,Z)$, where $Z$ is some element of $\mathcal{W}$ such that $f(Z) = U$.

With these observations we are ready for the main result of this section and of this paper:

\begin{theorem}\label{thm:desimplieseff}
In $\Fr_{l.f}$ and $\Fr_{f}$ all descent equivalence relations are effective.
\end{theorem}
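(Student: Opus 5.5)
By Proposition \ref{prop:eff}, it suffices to show that a descent equivalence relation $\mathcal{E}$ over $f\colon\mathcal{W}\to\mathcal{F}$ for $p\colon\mathcal{F}\to\mathcal{G}$ is downward closed and closed under non-empty unions. The finite case then follows from the locally finite one: finite families form a full subcategory closed under the pullbacks of Proposition \ref{prop:pullback given by category representation}, and the quotient $\mathcal{W}_\mathcal{E}$ built in Proposition \ref{prop:eff} is finite when $\mathcal{W}$ is. The tool throughout is Proposition \ref{prop:deseqframes}. Each $A\in\mathcal{E}$ is determined by $A_f$, and $A\to A_f$ is bijective. Every admissible $D$ lifts uniquely to $\mathsf{Lift}(D)\in\mathcal{E}$.

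\textbf{Step 1 (downward closure).} Let $A\in\mathcal{E}\cap\mathsf{B}(X,Y)$ and let $B\subseteq A$ be a bisimulation. Since $A\to A_f$ is injective, $B_f$ is a bisimulation contained in $A_f$, and it is admissible because $A_f$ is. So $L:=\mathsf{Lift}(B_f)\in\mathcal{E}$ exists, with $L\in\mathsf{B}(X,Z)$ for some $Z$ satisfying $f(Z)=f(Y)$. I want to show $L=B$. First compose $L^{\dagger}A\in\mathcal{E}$ by (Dagger) and (W-Composition), and push $(-)_f$ to the right: $(L^\dagger A)_f=L^\dagger A_f\supseteq L^\dagger L_f\supseteq \{(z,f_Z(z))\}$, using $\Delta_Z\subseteq L^\dagger L$. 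The aim is to identify $Z$ with $Y$ and $L$ with $B$. For this I would take the uniqueness clause applied to $D_{X,U}$-type maximal elements: both $\Delta_Y$ and the appropriate element over $Z$ lie in $\mathcal{E}$ with comparable images. More concretely, I would use the composable-pairs form of (Composition). Let $P=\{((x,y),(y,y)) : (x,y)\in B\}\cup\ldots$ be a suitable bisimulation of composable pairs between $A$ and $\Delta_Y$. Then $\mathsf{Comp}(P)$ recovers $B$ inside $\mathcal{E}$ directly, provided $P$ can be made a bisimulation of pairs. Since $B$ is only a sub-bisimulation, $P\subseteq A\times\Delta_Y$ must cover all of $A$, so I will instead pass through $L$ and use injectivity of $L^\dagger A\to (L^\dagger A)_f$ to force $Z=Y$ and $L=B$.

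\textbf{Step 2 (unions).} Let $A,B\in\mathcal{E}\cap\mathsf{B}(X,Y)$. Then $A_f\cup B_f$ is admissible, so it has a lift $M\in\mathcal{E}$. Both $A_f$ and $B_f$ are contained in $M_f$, and by Step 1 their lifts are sub-bisimulations of $M$, which yields $A,B\subseteq M$. Uniqueness of lifts, together with downward closure applied to $A\cup B\subseteq M$, then gives $A\cup B\in\mathcal{E}$. An alternative route uses the maximal lift $\mathsf{Lift}(D_{X,f(Y)})$ directly: every member of $\mathcal{E}\cap\mathsf{B}(X,Y)$ should be contained in it, which again reduces unions to downward closure.

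\textbf{Main obstacle.} The hard step is Step 1, namely showing that the lift of $B_f$ lands in the same $Y$ and equals $B$. This needs a careful combination of the bijectivity of $A\to A_f$ with the full (Composition) axiom for bisimulations of composable pairs, since W-Composition alone seems too weak. The first thing I would try is to build $P\in\mathsf{B}(A,L)$ as $\{(a,\ell) : a\in A,\ \ell\in L,\ a_f=\ell_f,\ \pi_1 a=\pi_1\ell\}$, after daggering $A$ so that the pairs are composable. Then I would check that $\mathsf{Comp}(P)$ is a bisimulation in $\mathcal{E}$ whose image under $(-)_f$ is diagonal-like. Finally, uniqueness of lifts would identify it with $\Delta$, which forces $L\subseteq A$ and then $L=B$.
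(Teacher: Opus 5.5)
Your overall frame matches the paper's: reduce via Proposition \ref{prop:eff} to downward closure and union closure, then exploit uniqueness of lifts from Proposition \ref{prop:deseqframes}. There is a genuine gap, though. Step 1, which you flag as the main obstacle, is not proved, and the construction you propose for it fails. The relation $P=\{((y,x),(x,z)) : (x,y)\in A,\ (x,z)\in L,\ f_Y(y)=f_Z(z)\}$ is a bisimulation from $A^{\dagger}$ to $L$ only when $B=A$. To cover $(x,y)\in A\setminus B$ you need $(x,f_Y(y))\in L_f=B_f$, and injectivity of $A\to A_f$ shows this never happens. If $P$ were a bisimulation, your diagonal argument would indeed give $Z=Y$ and $L\subseteq A$. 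The obstruction is that the only element of $\mathcal{E}$ you have over $B_f$ is $L$ itself. Passing to full (Composition) does not help, and is not needed: the paper uses only (Dagger) and (W-Composition).

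Step 2 inherits the gap. $M=\mathsf{Lift}(A_f\cup B_f)$ lies in $\mathsf{B}(X,Z)$ for an unknown $Z$ with $f(Z)=f(Y)$. The part of $M$ lying over $A_f$ need not cover $Z$, so downward closure cannot give $A\subseteq M$. Downward closure by itself never yields union closure: the relation of Example \ref{ex: s5 non separating} is downward closed but not union closed. Your alternative route is also mistyped, since $\mathsf{Lift}(D_{X,f(Y)})$ lies in $\mathsf{B}(X,Z')$ for some $Z'$, not in $\mathsf{B}(X,Y)$.

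What is missing is the paper's first step: whenever $\mathsf{B}(X,Y)\cap\mathcal{E}$ is non-empty, it has a greatest element, namely $\mathsf{Lift}(D_{X,f(Y)})\,\mathsf{Lift}(D_{Y,f(Y)})^{\dagger}$.
\begin{itemize}
\item For $A\in\mathsf{B}(X,Y)\cap\mathcal{E}$, admissibility of $A_f$, maximality of $D_{Y,f(Y)}$, and the fact that $A$ covers $Y$ give $A^{\dagger}D_{X,f(Y)}=D_{Y,f(Y)}$.
\item Uniqueness of lifts then gives $A^{\dagger}\mathsf{Lift}(D_{X,f(Y)})=\mathsf{Lift}(D_{Y,f(Y)})$.
\item Hence $A\subseteq \mathsf{Lift}(D_{X,f(Y)})\mathsf{Lift}(D_{X,f(Y)})^{\dagger}A=\mathsf{Lift}(D_{X,f(Y)})\mathsf{Lift}(D_{Y,f(Y)})^{\dagger}$.
\end{itemize}
With this greatest element $M$, downward closure goes as follows.
\begin{itemize}
\item Since $B\subseteq A\subseteq M$, the sandwich $M\subseteq MM^{\dagger}B\subseteq MM^{\dagger}M\subseteq M$ holds.
\item Applying $(-)_f$ gives $MM^{\dagger}B_f=M_f=(MM^{\dagger}\mathsf{Lift}(B_f))_f$.
\item Uniqueness of lifts yields $MM^{\dagger}\mathsf{Lift}(B_f)=M$, so $\mathsf{Lift}(B_f)\subseteq M$; in particular it lies in $\mathsf{B}(X,Y)$.
\item Bijectivity of $M\to M_f$ then forces $\mathsf{Lift}(B_f)=B$.
\end{itemize}
Union closure is immediate from $A\cup B\subseteq M$.
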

\begin{proof}
Let $\mathcal{E}$ be an equivalential subobject of $\mathsf{Bis}(\mathcal{W})$, for some $\mathcal{W}$ in $\Fr_{l.f}$, and assume that $\mathcal{E}$ comes from a descent data over some $f : \mathcal{W} \to \mathcal{F}$ for some $p : \mathcal{F} \to \mathcal{G}$. We prove that $\mathcal{E}$ is effective.

We use the characterization of effectiveness in Proposition \ref{prop:eff}. Let $X, Y \in \mathcal{W}$. We first prove that, if $\mathsf{B}(X,Y) \cap \mathcal{E}$ is non-empty, then it has a greatest element. Take $A \in \mathsf{B}(X,Y) \cap \mathcal{E}$. By Proposition \ref{prop:deseqframes}.\ref{eq: first condition for descent}., since $A \in \mathcal{E}$, we have that $A_f \in \mathsf{Bis}(pf,p)$; this means that $X$ and $f(Y)$ are $(pf,p)$-compatible and that $A_f \in \mathsf{B}(X,f(Y))$ is $(pf,p)$-admissible. By $(pf,p)$-compatiblity, we can consider $D_{X,f(Y)}$ and $D_{Y,f(Y)}$, together with $\mathsf{Lift}(D_{X,f(Y)})$ and $\mathsf{Lift}(D_{Y,f(Y)})$ determined by $\mathcal{E}$. We prove that $A^{\dagger} \mathsf{Lift}(D_{X,f(Y)}) = \mathsf{Lift}(D_{Y,f(Y)})$: by uniqueness of Proposition \ref{prop:deseqframes}.\ref{eq: second condition for descent}., since both $A^{\dagger} \mathsf{Lift}(D_{X,f(Y)})$ and $\mathsf{Lift}(D_{Y,f(Y)})$ belong to $\mathcal{E}$ (by (Dagger) and (W-Composition)), it suffices to prove that $(A^{\dagger} \mathsf{Lift}(D_{X,f(Y)}))_f = (\mathsf{Lift}(D_{Y,f(Y)}))_f$, which is equivalent to $A^{\dagger} D_{X,f(Y)} = D_{Y,f(Y)}$, by pushing $(-)_f$ to the right. Obviously, the left-to-right inclusion holds, since $A^{\dagger} D_{X,f(Y)} \in \mathsf{B}(Y,f(Y))$ is $(pf,p)$-admissible and $D_{Y,f(Y)}$ is the greatest $(pf,p)$-admissible bisimulation in $\mathsf{B}(Y,f(Y))$. For the other inclusion, take $(y,u) \in D_{Y,f(Y)}$, i.e.\ such that $p_{f(Y)} f_Y(y) = p_{f(Y)}(u)$. Using the fact that $A$ is a bisimulation from $X$ to $Y$, we can find $x \in X$ such that $(x,y) \in A$. Since $A_f$ is $(pf,p)$-admissible, we have that $p_{f(X)} f_X (x) = p_{f(Y)} f_Y (y)$. As a consequence, $(x,u) \in D_{X,f(Y)}$. Composing, we have that $(y,u) \in A^{\dagger} D_{X,f(Y)}$. Putting everything together, since $\Delta_X \subseteq \mathsf{Lift}(D_{X,f(Y)}) (\mathsf{Lift}(D_{X,f(Y)}))^{\dagger}$,
\begin{align*}
A &\subseteq \mathsf{Lift}(D_{X,f(Y)}) (\mathsf{Lift}(D_{X,f(Y)}))^{\dagger} A =\\
&= \mathsf{Lift}(D_{X,f(Y)}) (A^{\dagger} \mathsf{Lift}(D_{X,f(Y)}))^{\dagger} = \mathsf{Lift}(D_{X,f(Y)}) (\mathsf{Lift}(D_{Y,f(Y)}))^{\dagger}.
\end{align*}
We proved that, if $\mathsf{B}(X,Y) \cap \mathcal{E}$ is non-empty, then $\mathsf{Lift}(D_{X,f(Y)}) (\mathsf{Lift}(D_{Y,f(Y)}))^{\dagger}$ is its greatest element.

We now prove that $\mathcal{E}$ is downward closed. Let $X, Y \in \mathcal{W}$ and let $A, B \in \mathsf{B}(X,Y)$ such that $B \subseteq A$; assume that $A \in \mathcal{E}$. We want to prove that $B \in \mathcal{E}$. Consider $M$ greatest element of $\mathsf{B}(X,Y) \cap \mathcal{E}$ (it exists for what we said before, $\mathsf{B}(X,Y) \cap \mathcal{E}$ being non-empty). Observe that
\begin{align*}
M \subseteq M M^{\dagger} B \subseteq M M^{\dagger} M \subseteq M.\tag{$*$}
\end{align*}

The first inclusion follows from the fact that $\Delta_Y \subseteq M^{\dagger} B$, since $B \subseteq A \subseteq M$ ($A \in \mathsf{B}(X,Y) \cap \mathcal{E}$ and $M$ is its greatest element); by $B \subseteq M$, we also have the second inclusion (the inclusion order is compatible with the composition); the last inclusion follows from the fact that $M M^{\dagger} M \in \mathsf{B}(X,Y) \cap \mathcal{E}$ (by (Dagger) and (W-Composition)) and that $M$ is its greatest element. Observe that, by the $(pf,p)$-admissibility of $A_f$, we also have that $B_f \in \mathsf{B}(X,f(Y))$ is $(pf,p)$-admissible, since $B \subseteq A$, hence $B_f \subseteq A_f$; in other words, $B_f \in \mathsf{Bis}(pf,p)$. Consider $\mathsf{Lift}(B_f) \in \mathcal{E}$. 
Applying $(-)_f$ to ($*$) and pushing it to the right, we obtain $M M^{\dagger} B_f = M_f$. However, we also have $(M M^{\dagger} \mathsf{Lift}(B_f))_f = M M^{\dagger} B_f$, since $(\mathsf{Lift}(B_f))_f = B_f$. From the uniqueness part of Proposition \ref{prop:deseqframes}.\ref{eq: second condition for descent}., since both $M M^{\dagger} \mathsf{Lift}(B_f)$ and $M$ belong to $\mathcal{E}$, it follows that $M M^{\dagger} \mathsf{Lift}(B_f) = M$. As a consequence, $\mathsf{Lift}(B_f) \subseteq M$. We can then conclude that $B = \mathsf{Lift}(B_f) \in \mathcal{E}$: they are both subsets of $M$, and both $B \to B_f$ and $\mathsf{Lift}(B_f) \to B_f$ are the restrictions of $M \to M_f$, which is bijective by Proposition \ref{prop:deseqframes}.\ref{eq: first condition for descent}., since $M \in \mathcal{E}$.

Closure under non-empty unions is now easy. Let $X, Y \in \mathcal{W}$ and let $A, B \in \mathsf{B}(X,Y)$; assume that $A, B \in \mathcal{E}$. We want to prove that $A \cup B \in \mathcal{E}$. Consider $M$ greatest element of $\mathsf{B}(X,Y) \cap \mathcal{E}$. Then, by $A \subseteq M$ and $B \subseteq M$, we have that $A \cup B \subseteq M$. Using downward closure, since $A \cup B \in \mathsf{B}(X,Y)$, we have that $A \cup B \in \mathcal{E}$.
\end{proof}

\begin{theorem}\label{cor: almost barr exactness}
The categories $\Fr_{l.f}$ and $\Fr_{f}$ are almost Barr-exact.
\end{theorem}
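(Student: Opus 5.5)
The plan is to combine Theorem \ref{thm:desimplieseff} with Corollary \ref{cor:almostchar}. That corollary states that a \emph{regular} category is almost Barr-exact if and only if all its descent equivalence relations are effective. Theorem \ref{thm:desimplieseff} already supplies the effectiveness of descent equivalence relations in both $\Fr_{l.f}$ and $\Fr_f$. So the only remaining task is to verify that these two categories are regular: they have finite limits, regular epimorphisms are stable under pullback, and kernel pairs admit coequalizers.

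For $\Fr_f$, I would argue via duality. The category $\Fr_f$ is dual to finite $\mathsf{S5}$-algebras, and since $\mathsf{S5}$ is locally tabular these coincide with $\mathsf{Alg}(\mathsf{S5})_{fp}$. The logic $\mathsf{S5}$ has the Beth property and uniform interpolation in the form $\exists_{\mathrm{MIP}}$; both are standard for $\mathsf{S5}$, and I would cite them. Proposition \ref{prop: regularity of the category in question} then gives regularity. As an additional check, I would also prove regularity directly from the family representation, since that argument covers $\Fr_{l.f}$ as well.

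The direct argument runs as follows.
\begin{enumerate}
\item \emph{Finite limits.} The terminal object is $\mathbf{1}=\{\{*\}\}$, and pullbacks are given by $\mathsf{Bis}(q,p)$ (Proposition \ref{prop:pullback given by category representation}). Both constructions preserve finiteness and local finiteness.
\item \emph{Coequalizers of kernel pairs.} Kernel pairs are effective equivalence relations, so the quotient $\mathcal{W}\to\mathcal{W}_\mathcal{E}$ built in the proof of Proposition \ref{prop:eff} provides their coequalizers, as remarked in the footnote there.
\item \emph{Regular epimorphisms.} I would characterize them as the p-morphisms $f$ whose underlying function $\mathcal{F}\to\mathcal{G}$ is surjective. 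Every regular epi is the coequalizer of its kernel pair. Unwinding the construction of $\mathcal{W}_\mathcal{E}$, such an $f$ is surjective on families, and each $f_X$ is already surjective by definition of a p-morphism.
\item \emph{Pullback stability.} Take $p$ surjective on families and any $q\colon\mathcal{W}\to\mathcal{G}$. Given $X\in\mathcal{W}$, choose $U$ with $p(U)=q(X)$. The greatest admissible relation $\{(x,u): q_X(x)=p_U(u)\}$ is a bisimulation, because $q_X$ and $p_U$ are both surjective onto $q(X)$. Hence $\pi_1$ is surjective on families, and therefore regular epi.
\end{enumerate}

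I expect the main obstacle to be step 3: showing that every regular epimorphism (equivalently, every coequalizer) is surjective on families, and conversely that every such surjection is a coequalizer of its kernel pair. I would handle this by comparing $f$ with the canonical quotient $\mathcal{W}_{\mathsf{Bis}(f,f)}$ and showing the induced comparison map is an isomorphism when $f$ is surjective on families. For the converse direction, I would note that an image factorization through the subfamily $f(\mathcal{W})$ forces surjectivity, since subobjects are subfamilies by Proposition \ref{prop: basic facts of the representation}. With regularity established, Corollary \ref{cor:almostchar} together with Theorem \ref{thm:desimplieseff} yields the claim.
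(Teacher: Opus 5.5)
Your proposal is correct and has the same structure as the paper's proof: regularity of the two categories, combined with Corollary~\ref{cor:almostchar} and Theorem~\ref{thm:desimplieseff}. The only difference is that the paper just cites regularity of $\mathsf{S5Fr}_{l.f}$ and $\mathsf{S5Fr}_{f}$ from \cite[Lemma 5.12.i)]{Ghilardi2002}, whereas you verify it directly from the family representation (regular epimorphisms are the p-morphisms surjective on families, and this is stable under the $\mathsf{Bis}$-pullbacks), which is a sound, self-contained argument.
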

\begin{proof}
Recall that both $\Fr_{l.f}$ and $\Fr_{f}$ are regular categories (see e.g., \cite[Lemma 5.12.i)]{Ghilardi2002}). 
By Corollary \ref{cor:almostchar}, almost Barr exactness corresponds to regularity together with coequivalence relations arising from descent data being effective. Thus the result follows from Theorem \ref{thm:desimplieseff}.
\end{proof}

In \cite{deberardinisprooftheoryforprofinite}, to a normal modal logic $L$ (with the finite model property) is associated an infinitary extension $L^\infty$, whose algebraic models are given by profinite $L$-algebras (inverse limits of finite $L$-algebras). The category of profinite $L$-algebras (and morphisms preserving arbitrary meets and joins) is dual to the category $L\mathsf{Fr}_{l.f}$ of locally finite Kripke frames validating $L$ (and p-morphisms). The same bridge theorems connecting syntactic properties of a logic $L$ and categorical properties of $\mathsf{Alg}(L)_{fp}^{op}$ hold for $L^\infty$ and $L\mathsf{Fr}_{l.f}$.

\begin{theorem}\label{cor:S5LCoSP}
The logics $\mathsf{S5}$ and $\mathsf{S5}^\infty$ have the \emph{LCoSP}.
\end{theorem}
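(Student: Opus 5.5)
The plan is to reduce the statement to Theorem \ref{thm:desimplieseff} via the bridge result, Theorem \ref{thm: characterization of almost Barr-exactness}. By that theorem, since $\mathsf{S5}$ has the Beth property, the LCoSP for $\mathsf{S5}$ is equivalent to the effectiveness of all descent equivalence relations in $\mathsf{Alg}(\mathsf{S5})_{fp}^{op}$. So it suffices to identify $\mathsf{Alg}(\mathsf{S5})_{fp}^{op}$ with $\Fr_f$ and invoke Theorem \ref{thm:desimplieseff}.

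\textbf{Step 1: identify the category.} $\mathsf{S5}$ is locally tabular, so finitely presented $\mathsf{S5}$-algebras are exactly the finite ones. Finite modal algebras are dually equivalent to finite Kripke frames with p-morphisms (Jónsson–Tarski/Thomason duality restricted to the finite case). Hence $\mathsf{Alg}(\mathsf{S5})_{fp}^{op}\simeq \Fr_f$. Equivalences of categories preserve finite limits, kernel pairs, pullback squares and hence the notions of equivalence relation, descent datum and effectiveness. In particular they preserve the pullback characterization of Theorem \ref{thm:deseq}.

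\textbf{Step 2: transfer.} By Theorem \ref{thm:desimplieseff}, every descent equivalence relation in $\Fr_f$ is effective. Transporting along the equivalence, every descent equivalence relation in $\mathsf{Alg}(\mathsf{S5})_{fp}^{op}$ is effective. Then Theorem \ref{thm: characterization of almost Barr-exactness} yields the LCoSP for $\mathsf{S5}$. Concretely, for $\alpha,\beta,\gamma,\overline{\xi}$ as in the definition, Proposition \ref{prop: correspondence of descent with local coequivalence} shows that $\rho_{\gamma,\overline{\xi}}$ is dual to a descent equivalence relation. That relation is effective, so by Proposition \ref{prop: effective to coequivalence} $\rho_{\gamma,\overline{\xi}}$ is separating.

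\textbf{Step 3: the infinitary case.} For $\mathsf{S5}^\infty$ the argument is parallel. We use the duality between profinite $\mathsf{S5}$-algebras and $\Fr_{l.f}$, together with the remark that the bridge theorems (Propositions \ref{prop: coequivalence to dual}, \ref{prop: effective to coequivalence}, \ref{prop: correspondence of descent with local coequivalence} and Theorem \ref{thm: characterization of almost Barr-exactness}) hold verbatim for $L^\infty$ and $L\mathsf{Fr}_{l.f}$. Applying the $\Fr_{l.f}$ half of Theorem \ref{thm:desimplieseff} then gives the LCoSP for $\mathsf{S5}^\infty$.

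\textbf{Main obstacle.} The delicate point is Step 3: checking that the syntactic–categorical correspondences transfer to the infinitary setting, including the Beth property for $\mathsf{S5}^\infty$ and the identification of "formulas" with objects of $\Fr_{l.f}$. I would handle this by citing \cite{deberardinisprooftheoryforprofinite}. I would also note that Theorem \ref{thm: characterization of almost Barr-exactness}'s equivalence of 1 and 2 needs only the Beth property, not regularity. Regularity of $\Fr_f$ and $\Fr_{l.f}$ is in any case available from Theorem \ref{cor: almost barr exactness}.
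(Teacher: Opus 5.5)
Your proposal is correct and follows the paper's strategy (the syntactic--categorical bridge plus the frame-side effectiveness result), with one streamlining. The paper invokes $\exists_{\mathrm{MIP}}$ to obtain regularity and then passes through almost Barr-exactness (Theorem~\ref{cor: almost barr exactness}); you instead use only the Beth-dependent equivalence of items 1 and 2 in Theorem~\ref{thm: characterization of almost Barr-exactness} and apply Theorem~\ref{thm:desimplieseff} directly, so regularity and $\exists_{\mathrm{MIP}}$ play no role. You also spell out the identification $\mathsf{Alg}(\mathsf{S5})_{fp}^{op}\simeq\mathsf{S5Fr}_{f}$ (via local tabularity and finite duality), which the paper's proof leaves implicit.
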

\begin{proof}
It is well known that  $\mathsf{S5}$  enjoys $\exists_{\mathrm{MIP}}$ (since it has Craig interpolation \cite{Maksimova1999}, and is locally tabular). Thus by Theorem \ref{thm: characterization of almost Barr-exactness}, for it to have LCoSP is equivalent for $\mathsf{Alg}(L)_{fp}^{op}$ to be almost Barr-exact. This is precisely what we showed in Theorem \ref{cor: almost barr exactness}.
\end{proof}

Theorem \ref{cor: almost barr exactness} suggests that modal and intuitionistic logics open a new class of interesting examples of almost Barr-exact categories, which are neither locally cartesian categories, nor Barr-exact, which might be attractive for the purposes of descent theory.

\section{Acknowledgements}

The authors would like to thank Silvio Ghilardi, Rui Prezado and Balder ten Cate for valuable discussions and pointers on the subject of this paper, and the anonymous referees for suggestions that improved the presentation of the results.

\bibliographystyle{eptcs}
\bibliography{generic}

\end{document}